\newtheorem{theorem}{Theorem}
\newtheorem{proposition}[theorem]{Proposition}
\newtheorem{lemma}[theorem]{Lemma}
\newtheorem{corollary}[theorem]{Corollary}
\newdefinition{definition}{Definition}
\newtheorem{remark}[theorem]{Remark}
\newproof{proof}{Proof}
\newcommand\R{\mathbb{R}}
\newcommand\Z{\mathbb{Z}}
\newcommand\C{\mathbb{C}}
\renewcommand\Re{{\operatorname{Re\,}}}
\renewcommand\Im{{\operatorname{Im\,}}}
\newcommand\eps{\varepsilon}
\begin{document}

\authorheadline{Brad Rodgers and Terence Tao}
\runningtitle{The De Bruijn-Newman constant is non-negative}
 
\begin{frontmatter}

\title{THE DE BRUIJN-NEWMAN CONSTANT IS NON-NEGATIVE}
\author[1]{Brad Rodgers}
 
\address[1]{Dept. of Math. and Stat., Queen's University, Kingston ON K7L 3N6, Canada \ead{brad.rodgers@queensu.ca}}

\author[2]{Terence Tao}
 
\address[2]{Department of Mathematics, UCLA, Los Angeles CA 90095, USA   \ead{tao@math.ucla.edu}}

\received{???}
 
\begin{abstract}

For each $t \in \R$, define the entire function
$$ H_t(z) \coloneqq \int_0^\infty e^{tu^2} \Phi(u) \cos(zu)\ du$$
where $\Phi$ is the super-exponentially decaying function
$$ \Phi(u) \coloneqq \sum_{n=1}^\infty (2\pi^2  n^4 e^{9u} - 3\pi n^2 e^{5u} ) \exp(-\pi n^2 e^{4u} ).$$
Newman showed that there exists a finite constant $\Lambda$ (the \emph{de Bruijn-Newman constant}) such that the zeroes of $H_t$ are all real precisely when $t \geq \Lambda$.  The Riemann hypothesis is the equivalent to the assertion $\Lambda \leq 0$, and Newman conjectured the complementary bound $\Lambda \geq 0$.

In this paper we establish Newman's conjecture.  The argument proceeds by assuming for contradiction that $\Lambda < 0$, and then analyzing the dynamics of zeroes of $H_t$ (building on the work of Csordas, Smith, and Varga) to obtain increasingly strong control on the zeroes of $H_t$ in the range $\Lambda < t \leq 0$, until one establishes that the zeroes of $H_0$ are in local equilibrium, in the sense that locally behave (on average) as if they were equally spaced in an arithmetic progression, with gaps staying close to the global average gap size.  But this latter claim is inconsistent with the known results about the local distribution of zeroes of the Riemann zeta function, such as the pair correlation estimates of Montgomery.
\end{abstract}
\MSC[2010]{11M06 (primary)}
 
\end{frontmatter}  

\section{Introduction}

Let $H_0 \colon \C \to \C$ denote the function
\begin{equation}\label{hoz}
 H_0(z) \coloneqq \frac{1}{8} \xi\left(\frac{1}{2} + \frac{iz}{2}\right),
\end{equation}
where $\xi$ denotes the Riemann xi function
\begin{equation}\label{sas}
 \xi(s) \coloneqq \frac{s(s-1)}{2} \pi^{-s/2} \Gamma\left(\frac{s}{2}\right) \zeta(s)
\end{equation}
and $\zeta$ is the Riemann zeta function.
Then $H_0$ is an entire even function with functional equation $H_0(\overline{z}) = \overline{H_0(z)}$, and the Riemann hypothesis is equivalent to the assertion that all the zeroes of $H_0$ are real.

It is a classical fact (see \cite[p. 255]{titch}) that $H_0$ has the Fourier representation
$$ H_0(z) = \int_0^\infty \Phi(u) \cos(zu)\ du$$
where $\Phi$ is the super-exponentially decaying function
\begin{equation}\label{phidef}
 \Phi(u) \coloneqq \sum_{n=1}^\infty (2\pi^2  n^4 e^{9u} - 3\pi n^2 e^{5u} ) \exp(-\pi n^2 e^{4u} ).
\end{equation}
The sum defining $\Phi(u)$ converges absolutely for negative $u$ also.  From Poisson summation one can verify that $\Phi$ satisfies the functional equation $\Phi(u) = \Phi(-u)$ (i.e., $\Phi$ is even).

De Bruijn \cite{debr} introduced the more general family of functions $H_t \colon \C \to \C$ for $t \in \R$ by the formula
\begin{equation}\label{htdef}
 H_t(z) \coloneqq \int_0^\infty e^{tu^2} \Phi(u) \cos(zu)\ du.
\end{equation}
As noted in \cite[p.114]{csv}, one can view $H_t$ as the evolution of $H_0$ under the backwards heat equation $\partial_t H_t(z)= -\partial_{zz} H_t(z)$.
As with $H_0$, each of the $H_t$ are entire even functions with functional equation $H_t(\overline{z}) = \overline{H_t(z)}$.  From results of P\'olya \cite{polya} it is known that $H_t$ has purely real zeroes for some $t$ then $H_{t'}$ has purely real zeroes for all $t'>t$.  
De Bruijn showed that the zeroes of $H_t$ are purely real for $t \geq 1/2$.  Strengthening these results, Newman \cite{newman} showed that there is an absolute constant $-\infty < \Lambda \leq 1/2$, now known as the \emph{De Bruijn-Newman constant}, with the property that $H_t$ has purely real zeroes if and only if $t \geq \Lambda$.  The Riemann hypothesis is then clearly equivalent to the upper bound $\Lambda \leq 0$.  Newman conjectured the complementary lower bound $\Lambda \geq 0$, and noted that this conjecture asserts that if the Riemann hypothesis is true, it is only ``barely so''.  As progress towards this conjecture,  several lower bounds on $\Lambda$ were established: see Table \ref{lower}.

\begin{table}[ht]
\caption{Previous lower bounds on $\Lambda$.  Dates listed are publication dates.  The final four results use the method of Csordas, Smith, and Varga \cite{csv}.}\label{lower}
\begin{tabular}{|l|l|}
\hline 
Lower bound on $\Lambda$ & Reference \\
\hline 
$-\infty$ & Newman 1976 \cite{newman} \\
$-50$ & Csordas-Norfolk-Varga 1988 \cite{cnv} \\
$-5$ & te Riele 1991 \cite{tr} \\
$-0.385$ & Norfolk-Ruttan-Varga 1992 \cite{nrv} \\
$-0.0991$ & Csordas-Ruttan-Varga 1991 \cite{crv} \\
\hline
$-4.379 \times 10^{-6}$ & Csordas-Smith-Varga 1994 \cite{csv} \\
$-5.895 \times 10^{-9}$ & Csordas-Odlyzko-Smith-Varga 1993 \cite{cosv} \\
$-2.63 \times 10^{-9}$ & Odlyzko 2000 \cite{odlyzko} \\
$-1.15 \times 10^{-11}$ & Saouter-Gourdon-Demichel 2011 \cite{saouter} \\
\hline
\end{tabular}
\end{table}

We also mention that the upper bound $\Lambda \leq 1/2$ of de Bruijn \cite{debr} was sharpened slightly\footnote{Added in press: this bound has recently been improved to $\Lambda \leq 0.22$ in \cite{polymath}.} by Ki, Kim, and Lee \cite{kkl} to $\Lambda < 1/2$.  See also \cite{stopple}, \cite{cmmrs} on work on variants of Newman's conjecture, and \cite[Chapter 5]{broughan} for a survey.

The main result of this paper is to affirmatively settle Newman's conjecture:

\begin{theorem}\label{main}  One has $\Lambda \geq 0$.
\end{theorem}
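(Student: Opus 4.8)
The plan is to argue by contradiction. Suppose $\Lambda<0$; then $0>\Lambda$, so by Newman's theorem every $H_t$ with $t\in(\Lambda,0]$ has only real zeroes, and in particular all zeroes of $H_0$ are real --- that is, the Riemann hypothesis holds. We may therefore use RH and its consequences, most importantly Montgomery's pair correlation estimate, throughout. For $t\in(\Lambda,0]$ list the zeroes of $H_t$ as $(x_j(t))_{j\in\Z}$, increasing in $j$, with $x_{-j}(t)=-x_j(t)$. Because the heat flow creates and destroys no zeroes and the net flux of zeroes past a fixed height $T$ over the bounded interval $(\Lambda,0]$ is negligible next to $N_0(T)$, the Riemann--von Mangoldt formula shows that the mean spacing of the $x_j(t)$ near a height $T$ is $(1+o(1))\,2\pi/\log T$, uniformly for $t$ in our range. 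Differentiating the identity $H_t(x_j(t))=0$ in $t$, inserting $\partial_tH_t=-\partial_{zz}H_t$, and evaluating $H_t''/H_t'$ at a simple zero through the Hadamard product of $H_t$ (which carries no exponential factor, as one verifies from the growth of $H_t$ along the imaginary axis), one obtains the gradient dynamics
\[
\dot x_j(t)\;=\;2\sum_{k\ne j}\frac{1}{x_j(t)-x_k(t)},
\]
the sum taken in the symmetric sense in which it converges --- the system analysed by Csordas, Smith, and Varga. The first task is to justify this rigorously for the infinite configuration, using the density bound to control the conditionally convergent interaction and to rule out collisions on all of $(\Lambda,0]$.

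The crux is a separation of scales. Near height $T$ the zeroes are spaced $\asymp 1/\log T$ apart, so adjacent zeroes feel relative velocities $\asymp\log T$; hence a \emph{local} rearrangement of the zeroes near height $T$ takes place on the short time scale $\asymp(\log T)^{-2}\to 0$, whereas we have a \emph{fixed} time interval of length $|\Lambda|>0$ available. One therefore expects that the flow, run from any fixed $t_0\in(\Lambda,0)$ forward to $t=0$, drives the zeroes near height $T$ to local equilibrium as $T\to\infty$, essentially regardless of their configuration at time $t_0$. Making this precise is the technical heart of the argument, and I would proceed in stages. First, prove a soft \emph{a priori} regularity bound, uniform for $t$ in compact subsets of $(\Lambda,0]$: the real zeroes of $H_t$ cannot clump much worse than the mean density allows, e.g. the localised energy $\sum_{x_j(t),x_k(t)\asymp T,\ j\ne k}\bigl(x_j(t)-x_k(t)\bigr)^{-2}$ is $\ll_\eta(\log T)^{O(1)}\,T$ for $t\ge\Lambda+\eta$; I expect this to require a fairly precise (Riemann--Siegel type) asymptotic expansion of $H_t$ for $t$ slightly negative, together with the order-one growth and the functional equation of $H_t$. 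Second, upgrade the classical Csordas--Smith--Varga monotonicity of discriminant-type energies along the flow to a \emph{local relaxation} statement: the interaction energy, localised to a window near height $T$, decreases to its window minimum --- the energy of the equilibrium comb --- on the time scale $(\log T)^{-2}$, uniformly over windows satisfying the a priori bound, via a Lyapunov/monotonicity argument (equivalently, a relative-entropy estimate between the renormalised empirical measure of the zeroes and the comb). Third, run a bootstrap: each bound on the local fluctuations of the zeroes of $H_0$ feeds back, through the dynamics and standard zero-density information for $\zeta$, into a strictly stronger bound, until after finitely many steps one arrives, for all large $T$, at
\[
\sum_{j:\ T\le x_j(0)\le 2T}\bigl((x_{j+1}(0)-x_j(0))-\overline{g}_T\bigr)^2 \;=\; o\!\left(\frac{T}{\log T}\right),\qquad \overline{g}_T:=\frac{4\pi}{\log T},
\]
i.e. the zeroes of $H_0$ are in local equilibrium in the averaged sense of the introduction.

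It remains to extract a contradiction. Under the substitution $z=2\gamma$ the zeroes of $H_0$ are precisely the doubled ordinates $2\gamma$ of the non-trivial zeroes $\tfrac12+i\gamma$ of $\zeta$ (using RH); writing $u_\gamma:=\gamma\,\tfrac{\log T}{2\pi}$ for the rescaled ordinates near a height $T$, the equilibrium estimate says that these behave, in the limit, like the integer lattice: for every Schwartz function $r$,
\[
\frac1N\sum_{\substack{\gamma\ne\gamma'\\ \gamma,\gamma'\asymp T}} r\bigl(u_\gamma-u_{\gamma'}\bigr) \;\longrightarrow\; \sum_{m\ne 0} r(m),
\]
$N$ being the number of ordinates in the window (here one uses that the $L^2$ control of the gaps pins, outside a negligible set of pairs, each difference $u_\gamma-u_{\gamma'}$ to within $o(1)$ of an integer, together with the rapid decay of $r$). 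On the other hand Montgomery's pair correlation theorem --- available because RH holds --- gives, for $r$ with $\widehat r$ supported in $(-1,1)$, that the same average tends instead to $\int_{\R} r(x)\bigl(1-\bigl(\tfrac{\sin\pi x}{\pi x}\bigr)^2\bigr)\,dx$. A short Fourier computation shows that these two limits differ by $\int_{-1}^{1}\widehat r(\alpha)\,|\alpha|\,d\alpha$, which is strictly positive whenever $\widehat r\ge 0$ is not identically zero; picking such an $r$ produces the contradiction and establishes $\Lambda\ge 0$.

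The main obstacle is the middle step, and within it two points stand out. One is the initial a priori bound: it amounts to quantitative control of the real zeroes of $H_t$ for $t$ slightly negative, which appears to demand new asymptotic analysis of $H_t$ off the line $t=0$. The other is the local relaxation estimate together with the bootstrap --- propagating sharp control through a nonlinear, non-local, infinite-dimensional flow while exploiting the $(\log T)^{-2}\ll|\Lambda|$ scale separation, without forfeiting the gain at each stage of the iteration. (Formulating ``local equilibrium'' so that it is at once reachable from the dynamics and refuted by Montgomery's theorem is a related, lower-level, subtlety.)
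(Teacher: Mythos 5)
Your outline reproduces the paper's strategy: assume $\Lambda<0$ (hence RH), use the Csordas--Smith--Varga dynamics \eqref{ode} as a gradient flow of a log-gas Hamiltonian, argue that the flow drives the zeroes to local equilibrium by time $0$, and contradict known pair-correlation information. The end game you propose (comparing against Montgomery's band-limited pair correlation; the lattice limit minus Montgomery's limit being $-\int_{-1}^1 \widehat r(\alpha)|\alpha|\,d\alpha$) is a viable alternative to the paper's use of the Conrey--Ghosh--Goldston--Gonek--Heath-Brown small-gaps theorem, though even there your reduction has a gap: $L^2$ control of \emph{consecutive} gaps outside an $o(1)$-density exceptional set does not by itself pin all pair differences, because exceptional gaps could form clusters of up to $o(\log T)$ zeroes (Littlewood's bound permits this), producing $\gg N$ close pairs; the paper avoids this by concluding a bound on a renormalized \emph{pair} energy (Proposition \ref{tlar}), which controls all pairs at once, and then only needs the much weaker statement \eqref{picketfence}.

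The decisive problem is that the middle of your argument --- which you yourself label the technical heart --- is a program rather than a proof, and the heuristic you base it on is wrong as stated. Relaxation does not occur on the time scale $(\log T)^{-2}$ ``essentially regardless of the configuration at time $t_0$'': a density fluctuation of spatial extent $\alpha$ relaxes in time $\asymp \alpha/\log T$, so a fixed time budget $|\Lambda|$ only equilibrates scales $o(\log T)$, and one must first prove that at negative times the zero counting function of $H_t$ already matches the Riemann--von Mangoldt density at scale $\log T$ with error $o(\log^2 T)$. This is exactly what the paper must supply and what your sketch leaves open: the saddle-point asymptotics for $H_t(x-i\kappa\log_+ x)$ (Lemma \ref{coarse_H_estimates}), the resulting counting formulae \eqref{Big_O_Bound}, \eqref{little_o_bound} and Corollary \ref{macro}, a quantitative lower bound on gaps (Proposition \ref{gap}, obtained by iterating a CSV-type inequality) without which the truncated Hamiltonian is not even finite, a weak integrated energy bound (Proposition \ref{energy-weak}) to justify differentiating the truncated Hamiltonian term by term, the renormalizations $V$, $L$, $\psi_T$, $\sim_T$ that make $\partial_t \tilde{\mathcal H}_T = -4\tilde E_T$ hold up to negligible errors with the arithmetic progression as the zero-energy state (Proposition \ref{dorium}), and the Bourgain-style pigeonhole that converts the integrated energy bound into a bound at $t=0$ (Proposition \ref{tlar-iter}). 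Your ``a priori bound, local relaxation, bootstrap'' stages name the right difficulties but contain no argument for any of them, so the proposal does not constitute a proof of Theorem \ref{main}.
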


We now discuss the methods of proof.  Starting from the work of Csordas-Smith-Varga \cite{csv}, the best lower bounds on $\Lambda$ were obtained by exploiting the following repulsion phenomenon: if $\Lambda$ was significantly less than zero, then adjacent zeroes of $H_0$ (or of the Riemann $\xi$ function) cannot be too close to each other (as compared with the other nearby zeroes). See \cite[Theorem 1]{csv} for a precise statement. 
  In particular, a negative value of $\Lambda$ gives limitations on the quality of ``Lehmer pairs'' \cite{lehmer}, which roughly speaking refer to pairs of adjacent zeroes of the Riemann zeta function that are significantly closer to each other than the average spacing of zeroes at that level.  The lower bounds on $\Lambda$ in \cite{csv}, \cite{cosv}, \cite{odlyzko}, \cite{saouter} then follow from numerically locating Lehmer pairs of increasingly high quality. (See also \cite{stopple-2} for a refinement of the Lehmer pair concept used in the above papers.)

In principle, one could settle Newman's conjecture by producing an infinite sequence of Lehmer pairs of arbitrarily high quality.  As suggested in \cite{odlyzko}, we were able to achieve this under the Gaussian Unitary Ensemble (GUE) hypothesis on the asymptotic distribution of zeroes of the Riemann zeta function; we do not detail this computation here\footnote{A sketch of the argument may be found at {\tt terrytao.wordpress.com/2018/01/20}.} as it is superseded by our main result.  However, without the GUE hypothesis, the known upper bounds on narrow gaps between zeroes (e.g. \cite{cgg}) do not appear to be sufficient to make this strategy work, even if one assumes the Riemann Hypothesis (which one can do for Theorem \ref{main} without loss of generality). Instead, we return to the analysis in \cite{csv} and strengthen the repulsion phenomenon to a \emph{relaxation to local equilibrium} phenomenon: if $\Lambda$ is negative, then the zeroes of $H_0$ are not only repelled from each other, but will nearly always be arranged locally as an approximate arithmetic\footnote{To illustrate the equilibrium nature of arithmetic progressions under backwards heat flow, consider the entire functions $F_t(z) \coloneqq e^{tu^2} \cos(zu)$ for some fixed real $u>0$.  These functions all have zeroes on the arithmetic progression $\{ \frac{2\pi (k+\tfrac{1}{2})}{u}: k \in \Z \}$ and solve the backwards heat equation $\partial_t F_t = - \partial_{zz} F$.} progression, with the gaps between zero mostly staying very close to the global average gap that is given by the Riemann-von Mangoldt formula.  

To obtain the local relaxation to equilibrium under the hypothesis that $\Lambda < 0$ requires a sequence of steps in which we obtain increasingly strong control on the distribution of zeroes of $H_t$ for $\Lambda < t \leq 0$ (actually for technical reasons we will need to move $t$ away from $\Lambda$ as the argument progresses, restricting instead to ranges such as $\Lambda/2 \leq t \leq 0$ or $\Lambda/4 \leq t \leq 0$).  The first step is to obtain Riemann-von Mangoldt type formulae for the number of zeroes of $H_t$ in an interval such as $[0,T]$ or $[T,T+\alpha]$ where $T \geq 2$ and $0 < \alpha \leq o(T)$.  When $t=0$, we can obtain asymptotics of $\frac{T}{4\pi} \log \frac{T}{4\pi} - \frac{T}{4\pi} + O(\log T)$ and $\frac{\alpha}{4\pi} \log T + o(\log T)$ by the classical Riemann-von Mangoldt formula and a result of Littlewood respectively; this gives good control on the zeroes down to length scales $\alpha \asymp 1$.  For $\Lambda < t < 0$, we were only able to obtain the weaker bounds of $\frac{T}{4\pi} \log \frac{T}{4\pi} - \frac{T}{4\pi} + O(\log^2 T)$ and $\frac{\alpha}{4\pi} \log T + o(\log^2 T)$ respectively down to length scales $\alpha \asymp \log T$, but it turns out that these bounds still (barely) suffice for our arguments; see Section \ref{vm}.  A key input in the proof of the Riemann-von Mangoldt type formula will be some upper and lower bounds for $H_t(x-iy)$ when $y$ is comparable to $\log x$; see Lemma \ref{coarse_H_estimates} for a precise statement.  The main tool used to prove these bounds is the saddle point method, in which various contour integrals are shifted until they resemble the integral for the Gamma function, to which the Stirling approximation may be applied.

It was shown in \cite{csv} that in the region $\Lambda < t \leq 0$, the zeroes $x_j(t)$ of $H_t$ are simple, and furthermore evolve according to the system of ordinary differential equations
\begin{equation}\label{xkt}
 \partial_t x_k(t) = 2 \sum_{j:\, j \neq k} \frac{1}{x_k(t) - x_j(t)}; 
\end{equation}
see Theorem \ref{dynam} for a more precise statement.  One can view this equation as describing the dynamics of a system of ``particles'' $x_j$, in which every pair of particles $x_j, x_k$ experiences a repulsion\footnote{We caution however that the dynamics here are not Newtonian in nature, since \eqref{xkt} prescribes the velocity $\partial_t x_k$ of each particle rather than the acceleration $\partial_t^2 x_k$.  Nevertheless we found the physical analogy to be helpful in locating the arguments used in this paper.} that is inversely proportional to their separation.
By refining the analysis in \cite{csv}, we can obtain a more quantitative lower bound on the gap $x_{j+1}(t) - x_j(t)$ between adjacent ``particles'' (zeroes), in particular establishing a bound of the form
$$ \log \frac{1}{x_{j+1}(t) - x_j(t)} \ll \log^2 j \log\log j$$
for all large $j$ in the range $\Lambda/2 \leq t \leq 0$; see Proposition \ref{gap} for a more precise statement.  While far from optimal, this bound almost allows one to define the \emph{Hamiltonian}
$$ {\mathcal H}(t) \coloneqq \sum_{j,k:\, j \neq k} \log \frac{1}{|x_j(t)-x_k(t)|},$$
although in practice we will have to apply some spatial cutoffs in $j,k$ to make this series absolutely convergent.  For the sake of this informal overview we ignore this cutoff issue for now.  The significance of this quantity is that the system \eqref{xkt} can (formally, at least) be viewed as the gradient flow for the Hamiltonian ${\mathcal H}(t)$.  In particular, there is a formal monotonicity formula
\begin{equation}\label{het} 
\partial_t {\mathcal H}(t) = - 4 E(t)
\end{equation}
where the \emph{energy} $E(t)$ is defined as
$$ E(t) \coloneqq \sum_{j,k:\, j \neq k} \frac{1}{|x_j(t)-x_k(t)|^2}.$$
Again, in practice one needs to apply spatial cutoffs to $j,k$ to make this quantity finite, and one then has to treat various error terms arising from this cutoff, which among other things ``renormalizes'' the summands $\frac{1}{|x_j(t)-x_k(t)|^2}$ so that the renormalized energy vanishes when the zeroes are arranged in the equilibrium state of an arithmetic progression; we ignore these issues for the current discussion. A further formal calculation indicates that $E(t)$ is monotone non-increasing in time (so that ${\mathcal H}(t)$ is formally convex in time, as one would expect for the gradient flow of a convex Hamiltonian).  Exploiting (a variant of) the equation \eqref{het}, we are able to control integrated energies that resemble the quantities $\int_{\Lambda/2}^0 E(t)\ dt$; see first the weak preliminary integrated energy bound in Proposition \ref{energy-weak}, and then the final integrated energy bound in Theorem \ref{prelim}.  By exploiting local monotonicity properties of the energy (and using a pigeonholing argument of Bourgain \cite{bourgain}), we can then obtain good control (a truncated version of) the energy $E(t)$ at time $t=0$, which intuitively reflects the assertion that the ``particles'' $x_j(t)$ are close to local equilibrium at time $t=0$.  This implies that the zeroes of the Riemann zeta function behave locally like an arithmetic progression on the average.  However, this can be ruled out by the existing results on the local distribution of zeroes, such as pair correlation estimates of Montgomery \cite{montgomery}. As it turns out, it will be convenient to make use of a closely related estimate of Conrey, Ghosh, Goldston, Gonek, and Heath-Brown \cite{cgggh}.

It may be possible to use the methods of this paper to also address the generalized Newman conjecture introduced in \cite{stopple}, but we do not pursue this direction here\footnote{Note added in proof: the generalized Newman conjecture has now been established, with a significantly simpler proof than the one given here: see \cite{dobner}.}.

\begin{remark}  It is interesting to compare this with the results in \cite[Theorem 1.14]{kkl}, which show that regardless of the value of $\Lambda$, the zeroes of $H_t$ will be spaced like an arithmetic progression on average for any \emph{positive} $t$.
\end{remark}

\begin{remark}
Added in press: we note that in forthcoming work, Alex Dobner has found a proof that $\Lambda \geq 0$ which avoids the heat equation approach we have used here. Dobner's approach instead relies on a Riemann-Siegel type approximation for $H_t$ in order to demonstrate the existence of zeros off the critical line.  There is also some very intriguing numerical work of Rudolph Dwars (see the comments to {\tt terrytao.wordpress.com/2018/12/28}) that suggest that many of the zeroes of $H_t, t < 0$ away from the critical line organize around deterministic curves.
\end{remark}

\subsection{Acknowledgments}

The first author received partial support from the NSF grant DMS-1701577 and an NSERC grant. The second author is supported by NSF grant DMS-1266164 and by a Simons Investigator Award.  We thank anonymous referees for useful suggestions, and likewise we thank Charles Newman for helpful comments and Alex Dobner for corrections.

\subsection{Notation}

Throughout the rest of the paper, we will assume for sake of contradiction that Newman's conjecture fails:
$$ \Lambda < 0.$$
In particular this implies the Riemann hypothesis (which, as mentioned previously, is equivalent to the assertion $\Lambda \leq 0$).

We will have a number of logarithmic factors appearing in our upper bounds.  To avoid the minor issue of the logarithm occasionally being negative, we will use the modified logarithm
$$ \log_+(x) \coloneqq \log( 2 + |x| )$$
for several of these bounds.  We also use the standard branch of the complex logarithm, with imaginary part in the interval $(-\pi,\pi]$, and the standard branch $z^{1/2} \coloneqq \exp( \frac{1}{2} \log z)$ of the square root, defined using the standard branch of the complex logarithm.

Let $\Lambda < t \leq 0$, then the zeroes of $H_t$ are all real, and symmetric around the origin.  It is a result of Csordas, Smith, and Varga \cite[Corollary 1]{csv} that the zeroes are also distinct and avoid the origin.  Thus we can express the zeroes of $H_t$ as $(x_j(t))_{j \in \Z^*}$, where $\Z^* \coloneqq \Z \backslash \{0\}$ are the non-zero integers, 
$$ 0 < x_1(t) < x_2(t) < \dots,$$
and $x_{-j}(t) = -x_j(t)$ for all $j \geq 1$.

For any real numbers $j_- \leq j_+$, we use $[j_-,j_+]_{\Z^*}$ to denote the discrete interval 
$$ [j_-, j_+]_{\Z^*} \coloneqq \{ j \in \Z^*: j_- \leq j \leq j_+ \}.$$

We use the usual asymptotic notation $X \ll Y$, $Y \gg X$, or $X = O(Y)$ to denote a bound of the form $|X| \leq C Y$ for some absolute constant $C$, and write $X \asymp Y$ for $X \ll Y \ll X$. Note that as $\Lambda$ is also an absolute constant, $C$ can certainly depend on $\Lambda$; thus for instance $|\Lambda| \asymp 1$.  If we need the implied constant $C$ to depend on other parameters, we will indicate this by subscripts, thus for instance $X = O_\kappa(Y)$ denotes the estimate $|X| \leq C_\kappa Y$ for some $C$ depending on $\kappa$. If the quantities $X,Y$ depend on an asymptotic parameter such as $T$, we write $X = o_{T \to \infty}(Y)$ to denote a bound of the form $|X| \leq c(T) Y$, where $c(T)$ is a quantity that goes to zero as $T \to \infty$.

For $X$ and $Y$ depending on an asymptotic parameter $T$, we will also use the notation $X \lessapprox Y$ or $X = \tilde O(Y)$ for $X \ll Y \log^{O(1)} T$ in the last two sections of this paper.

Furthermore, in sums that will appear which depend on a parameter $T$, we say that indices $j,k$ are \emph{nearby}, and write $j \sim_T k$, if one has
$ 0 < |j-k| < (T^2 + |j| + |k|)^{0.1}.$

We will use a marked sum to indicate principle value summation:
$$\sum_j^\prime \cdots = \lim_{J\rightarrow\infty} \sum_{|j|\leq J} \cdots.$$ 
In cases where there is any chance of confusion for the range of summation we record the index being summed and use a colon to indicate its range; e.g. we write $\sum_{j:\, j \neq k}$ to indicate that the summation is over $j$, and $j$ is to not equal $k$ (where $k$ is fixed outside the sum). Semicolons are used to separate additional conditions.

We use the phrase \emph{for almost every $t$} throughout this paper to denote that a relation holds for all $t$ except a set of null Lebesgue measure.

\section{Asymptotics of $H_t$}\label{asymp-sec}

In this section we establish some upper and lower bounds on $H_t(z)$ and its logarithmic derivative $\frac{H'_t}{H_t}(z)$.  We will be able to obtain reasonable upper bounds in the regime where $z = x-iy$ with $y = O(\log_+ x)$, and obtain more precise asymptotics when $y \asymp \log_+ x$ (as long as the ratio $y / \log_+ x$ is large enough); this will be the key input for the Riemann-von Mangoldt type asymptotics in the next section.  More precisely, we show

\begin{lemma}\label{coarse_H_estimates}  Let $z = x - i \kappa \log_+ x$ for some $x \geq 0$ and $0 \leq \kappa \leq C$, and let $\Lambda < t \leq 0$. 
Then one has\footnote{The reader is advised not to take the numerous factors of $\pi$, $\sqrt{2}$, etc. appearing in this section too seriously, as the exact numerical values of these constants are not of major significance in the rest of the arguments.}
\begin{equation} \label{H_bound}
H_t(z) \ll \exp\left( -\frac{\pi x}{8} + O_C(\log_+^2 x)\right).
\end{equation}
Furthermore, there is an absolute constant $C'>0$ (not depending on $C$) such that if $\kappa \geq C'$, then one has the refinement
\begin{equation} \label{H_asymp}
H_t(z) = \exp\left(-\frac{\pi x}{8} + O_C(\log_+^2 x)\right),
\end{equation}
as well as the additional estimate
\begin{equation} \label{logderiv_H_asymp}
\frac{H_t'}{H_t}(z) = \frac{i}{4} \log\left(\frac{iz}{4\pi}\right) + O_C\left(\frac{\log_+ x}{x}\right),
\end{equation}
using the standard branch of the complex logarithm.  
\end{lemma}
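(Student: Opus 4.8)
The plan is to dispose first of $x$ in a bounded range: there $z=x-i\kappa\log_+ x$ ranges over a fixed compact set on which $H_t$ is analytic and non-vanishing (its zeroes being real while $\Im z<0$), with $|H_t|\asymp 1$ and $|H_t'/H_t|=O(1)$ uniformly in $t\in(\Lambda,0]$, so that the slack $O_C(\log_+^2 x)$, resp. $O_C(\log_+ x/x)$, absorbs everything. So assume $x$ is large, and write $s\coloneqq\tfrac12+\tfrac{iz}{2}$, so that $\Re s=\tfrac{1+\kappa\log_+ x}{2}$. The key preliminary is the exact identity
\[
H_t(z)=-\,\pi^{-s/2}\sum_{n\geq 1} n^{-s}\int_0^\infty e^{\frac{t}{16}\left(\log\frac{v}{\pi n^2}\right)^2}v^{s/2}\left(\frac{1-s}{8}-\frac{t}{32}\log\frac{v}{\pi n^2}\right)e^{-v}\,dv,
\]
valid whenever $\Re s>1$ (hence automatically for $x$ large). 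This is obtained by writing $\Phi(u)$ in terms of the theta function $\theta(y)\coloneqq\sum_{n\geq 1}e^{-\pi n^2 y}$ at $y=e^{4u}$, substituting $y=e^{4u}$ in $H_t(z)=\tfrac12\int_{-\infty}^\infty e^{tu^2}\Phi(u)e^{izu}\,du$ (using evenness of $\Phi$ and of $e^{tu^2}$), integrating by parts three times in $y$ — the Gaussian-type weight $e^{\frac{t}{16}(\log y)^2}$, together with the $y\to 0$ cancellation built into $\theta$ via the Jacobi transformation, kills all boundary terms once $\Re s>1$ — then expanding $\theta$ and substituting $v=\pi n^2 y$ termwise. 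One checks that at $t=0$ the $n$-integral is $\tfrac{1-s}{8}\Gamma(\tfrac s2+1)$ and the identity collapses to the classical $H_0(z)=\tfrac18\xi(s)=\tfrac{s(s-1)}{16}\pi^{-s/2}\Gamma(\tfrac s2)\zeta(s)$.

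For the upper bound \eqref{H_bound}, valid for all $0\le\kappa\le C$ (in particular $\kappa=0$, where the identity above is unavailable), I would instead shift the contour in $H_t(z)=\tfrac12\int_{-\infty}^\infty e^{tu^2}\Phi(u)e^{izu}\,du$ up to the line $\Im u=\tfrac\pi8-\delta$ with $\delta\asymp\log_+^2 x/x$. This is legitimate since $\Phi$ is analytic in $\{|\Im u|<\tfrac\pi8\}$, satisfies $\Phi(u)=\Phi(-u)=\overline{\Phi(\bar u)}$, and decays super-exponentially as $\Re u\to\pm\infty$ inside the strip, so the horizontal sides at $\Re u=\pm R$ drop out. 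On the new contour $|e^{iz(u+i(\pi/8-\delta))}|=e^{(\kappa\log_+ x)\Re u}\,e^{-x(\pi/8-\delta)}$ and $|e^{t(u+i(\pi/8-\delta))^2}|=O(1)$, while the Jacobi transformation gives $|\Phi(u+i(\tfrac\pi8-\delta))|\ll\delta^{-O(1)}$ for $|\Re u|\ll\log(1/\delta)$ and genuine super-exponential decay beyond; thus the growing factor $e^{(\kappa\log_+ x)\Re u}$ is swallowed once $\Re u\gg\log(1/\delta)\asymp\log_+ x$, and the remaining integral is $e^{O_C(\log_+^2 x)}$. Hence $|H_t(z)|\leq e^{-x(\pi/8-\delta)}e^{O_C(\log_+^2 x)}=\exp(-\tfrac{\pi x}{8}+O_C(\log_+^2 x))$.

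For the refinement \eqref{H_asymp} when $\kappa\ge C'$ I would return to the exact identity, evaluate the $n=1$ term by the complex-contour saddle-point method (as in the proof of Stirling's formula), and show the remaining terms are negligible. The $n=1$ integrand $v^{s/2}e^{-v}e^{\frac{t}{16}(\log\frac v\pi)^2}$ has a non-degenerate saddle at $v_*=\tfrac s2+\tfrac t8\log\tfrac{s/2}{\pi}+O(\log_+^2 x/x)$; deforming $[0,\infty)$ to the steepest-descent path through $v_*$ — legitimate since the integrand is analytic on $\C\setminus(-\infty,0]$, vanishes at $0$ (as $\Re s>1$), decays as $\Re v\to\infty$, and $\Re v_*>0$ once $C'$ is large — and applying Laplace's method identifies the $n=1$ term, up to a relative error $O_C(\log_+ x/x)$, with $-\pi^{-s/2}\tfrac{1-s}{8}\Gamma(\tfrac s2+1)e^{\frac{t}{16}(\log\frac{s/2}{\pi})^2}$, so that
\[
H_t(z)=\frac{s(s-1)}{16}\,\pi^{-s/2}\,\Gamma\!\big(\tfrac s2\big)\,e^{\frac{t}{16}\left(\log\frac{s/2}{\pi}\right)^2}\,(1+o(1))
\]
once the $n\ge 2$ terms are discarded. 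The same saddle analysis, run uniformly in $n$, shows the $n$-th term is smaller than the $n=1$ term by a factor $\ll n^{-\Re s}\exp\!\big(\tfrac t4\log n\,(\log n-\log\tfrac{x}{4\pi})\big)$, which sums over $n\ge 2$ to $\ll e^{-c\log_+^2 x}$ times the $n=1$ term — hence negligible — precisely when $C'$ is taken large enough in absolute terms (one needs $\kappa$ to exceed a fixed multiple of $|\Lambda|$, which is permissible since $\Lambda$ is absolute; this is the only place the hypothesis $\kappa\ge C'$ is used). Now \eqref{H_asymp} follows since $\log|\Gamma(\tfrac s2)|=\Re[\tfrac s2\log\tfrac s2-\tfrac s2]+O(\log_+ x)=-\tfrac{\pi x}{8}+O_C(\log_+^2 x)$ by Stirling (using $\log(s/2)=\log\tfrac x4+i(\tfrac\pi2-O(\tfrac{\log_+ x}{x}))$), while $\log|\pi^{-s/2}|$, $\log|s(s-1)|$, and $\Re[\tfrac t{16}(\log\tfrac{s/2}{\pi})^2]$ are all $O_C(\log_+^2 x)$.

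For \eqref{logderiv_H_asymp} I would differentiate the displayed asymptotic for $H_t(z)$ in $z$ (equivalently recover $H_t'$ from $H_t$ via Cauchy's formula on a circle of radius $\asymp 1$, on which \eqref{H_asymp} controls $H_t$): with $\tfrac{ds}{dz}=\tfrac i2$, and since $\tfrac{d}{ds}\log\tfrac{s(s-1)}{16}=O(1/x)$, $\tfrac{d}{ds}\log\Gamma(\tfrac s2)=\tfrac12\psi(\tfrac s2)=\tfrac12\log\tfrac s2+O(1/x)$, and $\tfrac{d}{ds}\big[\tfrac t{16}(\log\tfrac{s/2}{\pi})^2\big]=O(\tfrac{\log_+ x}{x})$, one gets $\tfrac{H_t'}{H_t}(z)=\tfrac i2\big(-\tfrac12\log\pi+\tfrac12\log\tfrac s2\big)+O_C(\tfrac{\log_+ x}{x})=\tfrac i4\log\tfrac{s}{2\pi}+O_C(\tfrac{\log_+ x}{x})=\tfrac i4\log\tfrac{iz}{4\pi}+O_C(\tfrac{\log_+ x}{x})$, using $\tfrac{s}{2\pi}=\tfrac{iz}{4\pi}(1+O(1/x))$. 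The main obstacle throughout is the lower bound: making the complex-contour saddle-point analysis of the perturbed Gamma integrals rigorous and uniform in $n$ while carrying along the genuinely $v$-dependent weight $e^{\frac{t}{16}(\log\frac{v}{\pi n^2})^2}$, and quantifying how large $C'$ must be so that the combined $n\ge 2$ contribution has a strictly smaller coefficient of $\log_+^2 x$ in its exponent than the $n=1$ main term. A secondary technical point is the theta-transformation estimate for $\Phi$ near the boundary $\Im u=\pi/8$ of its strip of analyticity, which underlies the contour-shift proof of \eqref{H_bound}, together with the absolute-convergence bookkeeping in the integration-by-parts derivation of the exact identity.
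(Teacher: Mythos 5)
Your strategy is sound and, at its core, the same as the paper's: reduce $H_t$ to a sum over $n$ of perturbed $\Gamma$-type integrals, evaluate the $n=1$ term by a saddle-point/Stirling analysis, and use $\Re s\gg\kappa\log_+ x$ with $\kappa\ge C'$ exceeding a fixed multiple of $|\Lambda|$ to kill the $n\ge 2$ tail.  Your exact identity is correct (it is the paper's expansion \eqref{ht1} after the substitution $v=\pi n^2e^{4w}$ plus one integration by parts, with Fubini justified for $t<0$ exactly as in the paper), so your Laplace analysis of the $v$-integral is an equivalent repackaging of the paper's treatment of $I_t(b,\zeta)$ via the saddle equation $4be^{4w_0}=\zeta+2tw_0$.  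The genuine differences: you prove \eqref{H_bound} by shifting the $u$-contour to $\Im u=\pi/8-\delta$ with theta-type bounds on $\Phi$ near the edge of its strip, whereas the paper deduces it from the Gaussian smoothing identity \eqref{htz} together with the classical bound \eqref{h0-upper} on $H_0$ -- both work, yours needing boundary estimates on $\Phi$, the paper's only elementary facts about $H_0$; and for \eqref{logderiv_H_asymp} you differentiate your asymptotic directly by Cauchy, while the paper divides by $H_0$, controls only $\Re\log(H_t/H_0)$, and hence needs Borel--Carath\'eodory before Cauchy (your steepest-descent evaluation controls the full complex value, so skipping Borel--Carath\'eodory is legitimate).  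Your claim that the $n\ge2$ tail is $e^{-c\log_+^2x}$-small relative to the main term is an overstatement (for fixed $\kappa$ it is only $x^{-c\kappa}$-small), but harmless: a relative saving of $1/x$ suffices, as in the paper.

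The one slip that matters is the claimed relative error $O_C(\log_+ x/x)$ in replacing the $n=1$ integral by $-\pi^{-s/2}\tfrac{1-s}{8}\Gamma(\tfrac s2+1)e^{\frac{t}{16}\log^2\frac{s/2}{\pi}}$.  Writing the phase as $\phi_0(v)+g(v)$ with $\phi_0(v)=\tfrac s2\log v-v$ and $g(v)=\tfrac{t}{16}\log^2\tfrac{v}{\pi}$, the saddle sits at $v_*=\tfrac s2+h$ with $h\approx\tfrac t8\log\tfrac{s/2}{\pi}$, and $\phi_0(v_*)+g(v_*)-\bigl(\phi_0(\tfrac s2)+g(\tfrac s2)\bigr)=-\tfrac{g'(s/2)^2}{2\phi_0''(s/2)}+\dots=\tfrac{t^2\log^2(s/2\pi)}{128\,(s/2)}+\dots$, which for $t\asymp 1$ is genuinely of size $\asymp\log_+^2x/x$, not $O(\log_+x/x)$.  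This is irrelevant for \eqref{H_asymp}, but as written your derivation of \eqref{logderiv_H_asymp} -- Cauchy's formula on circles of radius $\asymp1$ -- then only yields an error $O_C(\log_+^2x/x)$, a factor $\log_+x$ short of the target.  The repair is routine and stays inside your framework: either carry the explicit correction factor $\exp\bigl(\tfrac{t^2\log^2(s/2\pi)}{128(s/2)}\bigr)$ in the main term (its $z$-derivative is $O(\log_+^2x/x^2)$, hence absorbable), or do what the paper does and run the Cauchy estimate on discs of radius $\asymp\log_+x$, which is permissible because your asymptotic is uniform in $\kappa$ over a range such as $[C',2C]$ after adjusting constants, and which converts an $O(\log_+^2x/x)$ bound on $\log(H_t/M)$ into the required $O(\log_+x/x)$ bound on its derivative.
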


\begin{remark} With a little more effort one could replace the hypothesis $\Lambda < t$ here by $-C < t$; in particular (in contrast to the remaining arguments in this paper) these results are non-vacuous when $\Lambda \geq 0$.  However, we will need to assume $\Lambda < t$ in the application of these estimates in the next section, particularly with regards to the proof of \eqref{little_o_bound}.  Our proof methods also allow for a more precise version of the asymptotic \eqref{H_asymp} (as one might expect given the level of precision in \eqref{logderiv_H_asymp}), but such improvements do not seem to be helpful for the rest of the arguments in this paper.  In the $t=0$ case, one can essentially obtain Dirichlet series expansions for $\frac{1}{H_0(z)}$ or $\frac{H'_0}{H_0}(z)$ which allow one to also obtain bounds such as \eqref{H_asymp} or \eqref{logderiv_H_asymp} when the imaginary part of $z$ is much smaller than $\log_+ x$.  However, in the $t<0$ case there does not appear to be any usable series expansions for $\frac{1}{H_t}(z)$ or $\frac{H_t'}{H_t}(z)$ that could be used to prove \eqref{H_asymp} or \eqref{logderiv_H_asymp}.  Instead, we will prove these estimates by computing $H_t(z)$ to a high degree of accuracy, which we can only do when $y$ is greater than or equal to a large multiple of $\log_+ x$ in order to ensure that the series expansions we have for $H_t(z)$ converge rapidly.
\end{remark}

We begin by treating the easy case $t=0$, in which we can exploit the identity \eqref{hoz}.  We have the very crude bound
\begin{equation}\label{zsit}
 \zeta( \sigma + i \tau ) \ll (1+|\tau|)^{O(1)}
\end{equation}
whenever $\sigma \geq 1/2$ and $\tau \in \R$ (this follows for instance from \cite[Theorem 4.11]{titch}).  In the region $\sigma \geq 1/4$, we also have the Stirling approximation (see e.g. \cite[6.1.41]{as})
\begin{equation}\label{stirling}
 \Gamma( \sigma + i\tau ) = \exp\left( \left(\sigma + i\tau - \frac{1}{2}\right) \log (\sigma+i\tau) - (\sigma + i\tau) + \log \sqrt{2\pi} + O\left( \frac{1}{|\sigma+i\tau|} \right) \right),
\end{equation}
where we use the standard branch of the logarithm; in particular
\begin{equation}\label{stir2}
 \Gamma( \sigma + i\tau ) \ll \exp\left( (\sigma - \frac{1}{2}) \log |\sigma+i\tau| - \tau \operatorname{arctan} \frac{\tau}{\sigma} - \sigma \right).
\end{equation}
As $\operatorname{arctan} \frac{\tau}{\sigma} = \frac{\pi}{2} \operatorname{sgn}(\tau) + O( \frac{\sigma}{\sigma + |\tau|} )$, we have in particular that
$$
 \Gamma( \sigma + i\tau ) \ll \exp\left( - \frac{\pi}{2} |\tau| + O( \sigma \log_+( |\sigma| + |\tau| ) ) \right).$$
Inserting these bounds into \eqref{hoz}, \eqref{sas}, we obtain the crude upper bound
\begin{equation}\label{h0-upper}
 H_0(x-iy) \ll \exp\left( - \frac{\pi |x|}{8} +  O( (1+y) \log_+(|x|+y) ) \right)
\end{equation}
for $x \in\R$ and $y \geq 0$.  This gives the $s=0$ case of \eqref{H_bound}.  As is well known, when $\sigma \geq 2$ (say) we can improve \eqref{zsit} to
$$ |\zeta(\sigma+i\tau)| \asymp 1$$
and so we obtain the improvement
$$ H_0(x-iy) = \exp\left( - \frac{\pi |x|}{8} +  O( (1+y) \log_+(|x|+y) ) \right)$$
when $y \geq C' \log_+ x$ (in fact in this case it would suffice to have $y \geq 4$, say).  This gives the $s=0$ case of \eqref{H_asymp}.  Finally, from taking logarithmic derivatives of \eqref{hoz}, \eqref{sas} one has
$$ \frac{H'_0}{H_0}(z) = \frac{i}{2} \left( \frac{1}{s} + \frac{1}{s-1} - \frac{1}{2} \log \pi + \frac{1}{2} \frac{\Gamma'}{\Gamma} \left( \frac{s}{2} \right ) + \frac{\zeta'}{\zeta}(s) \right)$$
where $s \coloneqq \frac{1}{2}  + \frac{iz}{2}$.  From taking log-derivatives of \eqref{stirling} using the Cauchy integral formula, one has the well known asymptotic
$$ \frac{\Gamma'}{\Gamma} \left( \frac{s}{2} \right )  = \log \frac{s}{2} + O\left( \frac{1}{|s|} \right)$$
for the digamma function $\frac{\Gamma'}{\Gamma}$, and from the Dirichlet series expansion $\frac{\zeta'}{\zeta}(s) = - \sum_{n=1}^\infty \frac{\Lambda(n)}{n^s} \ll \sum_{n=2}^\infty \frac{\log n}{n^{\Re s}}$ one can easily establish the bound
$$\frac{\zeta'}{\zeta}(s)  \ll \frac{1}{|s|}$$
in the regime $C' \log_+ x \leq y \leq C \log x$.  Putting all this together, one obtains \eqref{logderiv_H_asymp} in this case.

Henceforth we address the $t<0$ case.
We begin with the proof of the upper bound \eqref{H_bound}.  Here it will be convenient to exploit the fundamental solution for the (backwards) heat equation to relate $H_t$ with $H_0$.
Indeed, for any $t<0$, we have the classical heat equation (or Gaussian) identity
\begin{equation}\label{tzu}
 e^{tu^2} \exp(izu) = \frac{1}{\sqrt{4\pi}} \int_\R e^{-r^2/4} \exp\left(i(z+r|t|^{1/2})u\right)\ dr
\end{equation}
for any complex numbers $z,u$; replacing $z,r$ by $-z,-r$ and averaging we conclude that
$$ e^{tu^2} \cos(zu) = \frac{1}{\sqrt{4\pi}} \int_\R e^{-r^2/4} \cos\left((z+r|t|^{1/2})u\right)\ dr.$$
 Multiplying by $\Phi(u)$, integrating $u$ from $0$ to infinity, and using Fubini's theorem, we conclude that
\begin{equation}\label{htz}
 H_t(z) = \frac{1}{\sqrt{4\pi}} \int_\R e^{-r^2/4} H_0(z+r|t|^{1/2})\, dr.
\end{equation}
Applying \eqref{h0-upper}, the triangle inequality, and the hypothesis $\Lambda < t \leq 0$, we conclude that
\begin{multline*}
H_t(x-iy) \ll \exp\left( - \frac{\pi |x|}{8} +  O( (1+y) \log_+(|x|+y) ) \right)\\
\times \int_\R \exp\left( -\frac{r^2}{4} + O( (1+y + |r|) \log_+ r ) \right)\, dr.
\end{multline*}
Using $(1+|r|) \log_+ r \leq \eps r^2 + O_\eps(1)$ and $y \log_+ r \ll \eps r^2 + O_\eps(y^2)$ for any absolute constant $\eps>0$, we have
$$ -\frac{r^2}{4} + O(|r|) + O( (1+y+|r|) (1+ \log_+ r) ) \leq -\frac{r^2}{8} + O( (1+y)^2 ),$$
thus arriving at the bound
$$ H_t(x-iy) \ll \exp\left( - \frac{\pi |x|}{8} +  O( (1+y) \log_+ |x| + (1+y)^2 ) \right).$$
Since $y = O_C(\log_+ x)$, this gives \eqref{H_bound}.

To prove the remaining two bounds \eqref{H_asymp}, \eqref{logderiv_H_asymp}, it is convenient to cancel off the $t=0$ case that has already been established, and reduce to showing that
\begin{equation} \label{H_asymp-cancel}
\frac{H_t}{H_0}(z) = \exp\left( O_C(\log_+^2 x)\right),
\end{equation}
and
\begin{equation} \label{logderiv_H_asymp-cancel}
\frac{H_t'}{H_t}(z) - \frac{H_0'}{H_0}(z) \ll_C \frac{\log_+ x}{x},
\end{equation}
when $\kappa \geq C'$.  To prove these estimates, the heat equation approach is less effective due to the significant oscillation present in $H_0$.  Instead we will use the method of steepest descent (also known as the saddle point method) to shift contours to where the phase is stationary rather than oscillating.  We allow all implied constants to depend on $C$.  We may assume that $x$ is larger than any specified constant $C''$ (depending on $C$), as the case $x=O_C(1)$ follows trivially from compactness, since the zeroes of $H_t$ for $t \geq \Lambda$ are all real, so that $H_t(z)$ is bounded away from zero in this region of interest.

Now suppose that $z = x-iy$ where $y = \kappa \log_+ x$ for some $C' \leq \kappa \leq C$; in particular $C$ is large since $C'$ is.  As $\Phi$ is even, we may write \eqref{htdef} as
$$ H_t(z) = \frac{1}{2} \int_\R e^{tu^2} \Phi(u) e^{izu}\ du.$$
From \eqref{phidef} and Fubini's theorem (which can be justified when $t<0$) we conclude that
\begin{equation}\label{ht1}
 H_t(z) = \frac{1}{2} \sum_{n=1}^\infty 2\pi^2 n^4 I_t( \pi n^2, 9+y+ix) - 3 \pi n^2 I_t( \pi n^2, 5+y+ix)
\end{equation}
where $I_t(b,\zeta)$ denotes the oscillatory integral
\begin{equation}\label{itz-def}
I_t(b,\zeta) \coloneqq  \int_\R \exp(tw^2 - b e^{4w} + \zeta w) \, dw,
\end{equation}
which is an absolutely convergent integral for $t < 0$ whenever $\Re b > 0$.  

We therefore need to obtain good asymptotics on $I_t(b,\zeta)$ for $b \geq 1$ and $\zeta$ in the region
\begin{equation}\label{om-def}
 \Omega \coloneqq \{ y+ix: x \geq C''; C' \log_+ x \leq y \leq 2C \log_+ x \}.
\end{equation}
Observe that the phase $tw^2 - be^{4w} + \zeta w$ has a stationary point at the origin when $4b = \zeta$.  In general, $4b$ will not equal $\zeta$; however, for any complex number $w_0$ in the strip
\begin{equation}\label{strip}
\left\{ w_0 \in \C: 0 \leq \Im(w_0) < \frac{\pi}{8} \right\};
\end{equation}
we see from shifting the contour in \eqref{itz-def} to the horizontal line $\{ w+w_0: w \in \R \}$ that we have the identity
\begin{equation}\label{scaling}
 I_t(b,\zeta) = \exp( tw_0^2 + \zeta w_0 ) I_t( b e^{4w_0}, \zeta + 2 t w_0 ) 
\end{equation}
whenever $b>0$ (so that $be^{4w_0}$ has positive real part).
We will thus be able to reduce to the stationary phase case $4b=\zeta$ if we can solve the equation 
\begin{equation}\label{4bw}
4b e^{4w_0} = \zeta + 2tw_0
\end{equation}
in the strip \eqref{strip}.  This we do in the following lemma\footnote{One could also write $w_0$ explicitly in terms of the Lambert $W$-function as $w_0 = -\frac{\zeta}{2t} + \frac{1}{4} W( -\frac{8b}{t} \exp( -\frac{2\zeta}{t} ) )$, but we will not use this expression in this paper, and in fact will not explicitly invoke any properties of the $W$-function in our arguments.}:

\begin{lemma}\label{Wfind}  If $b \geq 1$ and $\zeta \in \Omega$, then there exists a unique $w_0 = w_0(b,\zeta)$ in the strip \eqref{strip} such that \eqref{4bw} holds.  Furthermore we have the following estimates:
\begin{itemize}
\item[(i)] $\Re(4be^{4w_0}) \geq 1$.
\item[(ii)] (Precise asymptotic for small and medium $b$) If $\zeta = y+ix$ and $b \leq x \exp( 100 \frac{x^{1/2}}{|t|})$, then 
$$ w_0 = \frac{1}{4} \log \frac{x}{4b} + O^\R\left(\frac{1}{x}\right) + i \left(\frac{\pi}{8} - \frac{y}{4x} - \frac{t \log \frac{x}{4b}}{8 x} + O^\R_C\left(\frac{\log^2_+ x}{x^{3/2}}\right)\right)$$
where the superscript in the $O()$ notation indicates that these quantities are real-valued.
\item[(iii)]  (Crude bound for huge $b$) If $\zeta = y+ix$ and $b > x \exp( \frac{x^{1/2}}{|t|})$, then $\Re w_0$ is negative; in fact we have
$$ -\Re w_0 \geq \frac{1}{8} \log_+ b.$$
\end{itemize}
\end{lemma}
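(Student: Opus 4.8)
We sketch the approach. The plan is to recast \eqref{4bw} as a fixed-point equation for a contraction, deduce uniqueness immediately, obtain existence by reducing to one real variable, and read off (i)--(iii) from the resulting description of $w_0$. First note that any solution $w_0=\xi+i\eta$ of \eqref{4bw} in the strip \eqref{strip} automatically has $\zeta+2tw_0$ in the open first quadrant: the imaginary part of \eqref{4bw} reads $4be^{4\xi}\sin 4\eta=x+2t\eta$, whose right-hand side is positive (as $x\geq C''$ and $|t|\ll1$), forcing $\eta\in(0,\pi/8)$, hence $4\eta\in(0,\pi/2)$, hence the real part $4be^{4\xi}\cos 4\eta=y+2t\xi$ is positive as well. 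Consequently \eqref{4bw} is equivalent, on the strip, to $w_0=\Psi(w_0)$ with $\Psi(w)\coloneqq\frac14\log\frac{\zeta+2tw}{4b}$ (standard branch of the logarithm), and every solution lies in the convex region $\Sigma\coloneqq\{w:0<\Im w<\pi/8,\ \Re(\zeta+2tw)>0\}$.

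Uniqueness is then a contraction estimate: on $\Sigma$ the map $\Psi$ is holomorphic with $\Psi'(w)=\frac{t}{2(\zeta+2tw)}$, so $|\Psi'(w)|\leq\frac{|t|}{2\,\Im(\zeta+2tw)}\leq\frac{|t|}{2(x-|t|\pi/4)}<\frac12$ once $x$ exceeds an absolute constant, and since $\Sigma$ is convex, $\Psi$ is a strict contraction on $\Sigma$ and has at most one fixed point there. For existence I would reduce to one variable: the imaginary part of \eqref{4bw} lets us solve $\xi=\xi(\eta)\coloneqq\frac14\log\frac{x-2|t|\eta}{4b\sin 4\eta}$, and substituting into the real part reduces the problem to finding a zero on $(0,\pi/8)$ of the continuous function $G(\eta)\coloneqq(x-2|t|\eta)\cot 4\eta-y+2|t|\xi(\eta)$. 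One checks that $G(\eta)\to+\infty$ as $\eta\to0^+$ (both the $\cot 4\eta$ term and $2|t|\xi(\eta)$ blow up) while $G(\eta)\to-y+\frac{|t|}{2}\log\frac{x-|t|\pi/4}{4b}<0$ as $\eta\to(\pi/8)^-$, the sign using $b\geq1$, $y\geq C'\log_+ x$, and $C'$ being large relative to $|\Lambda|$; the intermediate value theorem then produces a solution, which lies in $\Sigma$. (Alternatively one can exhibit a small closed disk about the predicted value of $w_0$ on which $\Psi$ is a self-map and invoke the Banach fixed-point theorem.)

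With $w_0=\xi+i\eta$ now the solution, the estimates follow from the two real equations $4be^{4\xi}\cos 4\eta=y+2t\xi$ and $4be^{4\xi}\sin 4\eta=x+2t\eta$. Adding squares gives $|4be^{4w_0}|\geq x-|t|\pi/4$; for the sharper claim (i) that the real part $y+2t\xi=4be^{4\xi}\cos 4\eta$ is at least $1$, I argue by contradiction: if $y+2t\xi<1$ then $\cot 4\eta\leq(x-|t|\pi/4)^{-1}$, forcing $\sin 4\eta=1-O(x^{-2})$ and hence $\xi=\frac14\log\frac{x}{4b}+O(1/x)$ from the second equation, so that $\frac{|t|}{2}\log\frac{x}{4b}>y-2\geq C'\log_+ x-2$; since $C'$ is large, this forces $b<1$ for $x$ large, a contradiction. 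For (ii), the starting approximation $w_0\approx\frac14\log\frac{\zeta}{4b}$ together with iteration of $\Psi$ — expanding $\log(1+z)=z+O(z^2)$ for small $z$, and using $\frac1\zeta=\frac{-i}{x}+O(\frac{\log_+ x}{x^2})$, $\arg\zeta=\frac\pi2-\frac{y}{x}+O((\frac{y}{x})^3)$, $\log|\zeta|=\log x+O((\frac{y}{x})^2)$ — produces the claimed real and imaginary parts of $w_0$, the contraction bound (whose constant is $O(|t|/x)$, not merely $\frac12$) converting ``approximately fixed'' into ``very close to the true $w_0$''; here the upper bound $b\leq x\exp(10x/|t|)$ enters precisely to control the size of the first correction $\frac{2tw_0}{\zeta}$. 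For (iii), when $b>x\exp(x/|t|)$ the same two equations give $\frac14\log\frac{x-|t|\pi/4}{4b}\leq\xi$ and $4be^{4\xi}\leq x+y+2|t||\xi|$; the latter forces $\xi<0$ (since $b$ vastly exceeds $x+y$), whence $|\xi|\leq\frac14\log\frac{4b}{x}+O(1/x)\lesssim\log_+ b$, and substituting this bound back into $4be^{-4|\xi|}\leq x+y+2|t||\xi|$ improves it to $-\Re w_0=|\xi|\gg\log_+ b$.

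The main obstacle is to run this analysis uniformly in $b\geq1$, and especially through the transition where $\Re w_0$ changes sign: for small $b$ one has $\Re w_0\approx\frac14\log\frac{x}{4b}>0$ with $\Im w_0$ close to $\pi/8$, while for huge $b$ one has $\Re w_0<0$ with $\Im w_0$ small, and in between the fixed-point region must be kept inside $\Sigma$ and the correction terms kept small. This is exactly why the statement partitions $b$ into the overlapping ranges $b\leq x\exp(10x/|t|)$ and $b>x\exp(x/|t|)$, retaining only the coarser estimate (iii) once $b$ is large.
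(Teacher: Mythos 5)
Your treatment of existence and uniqueness is genuinely different from the paper's and is viable: the paper obtains both at once from the argument principle applied to $w \mapsto 4be^{4w}-2tw$ on the strip \eqref{strip}, using the fact that the images of the two edges of the strip sandwich the region $\Omega$, whereas you first observe (correctly) that any solution of \eqref{4bw} in the strip forces $\zeta+2tw_0$ into the open first quadrant, then get uniqueness from a contraction estimate for the map $w\mapsto \frac14\log\frac{\zeta+2tw}{4b}$ on the convex set where $0<\Im w<\pi/8$ and $\Re(\zeta+2tw)>0$, and existence by an intermediate value argument in the single variable $\Im w_0$; the boundary signs you check do come out right because $y\geq C'\log_+ x$ with $C'$ large compared to $|\Lambda|$. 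Parts (i) and (iii) are essentially the same manipulations of the real and imaginary parts \eqref{real}, \eqref{imaginary} that the paper performs, and your versions go through.

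The gap is in (ii). Your error control rests on linearizing $\log(1+z)$ with $z=\frac{2tw}{\zeta}$ at the initial guess $\frac14\log\frac{\zeta}{4b}$, and you assert that the hypothesis $b\leq x\exp(10\frac{x}{|t|})$ ``enters precisely to control the size of the first correction.'' It does not make that correction small: under this hypothesis $\left|\log\frac{x}{4b}\right|$ can be as large as $\frac{10x}{|t|}+O(\log_+ x)$, so $|z|$ is only $O(1)$ (about $5$ at the endpoint of the range), the expansion $\log(1+z)=z+O(|z|^2)$ is unavailable there, and even where $|z|\leq 1/2$ the quadratic term contributes an error of size $O\!\left(\frac{(\log_+ x+\log_+ b)^2}{x^2}\right)$, which exceeds the claimed $O(1/x)$ real-part error once $\log_+ b\gg x^{1/2}$ and the claimed $O_C(\log_+^2 x/x)$ imaginary-part error once $\log_+ b\gg x^{1/2}\log_+ x$. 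So as written your argument proves (ii) only for, say, $\log_+ b \ll x^{1/2}$, not on the full stated range, and the ``iteration produces the claimed real and imaginary parts'' step needs either an explicit restriction on $b$ or a different expansion point for large $b$. (For context, the paper's own proof of (ii) hinges on the analogous assertion $t\alpha\ll\log_+ x\ll x^{1/2}$ deduced from the same hypothesis on $b$, which likewise only covers $\log_+ b\ll x^{1/2}/|t|$; note that at $b=x\exp(10\frac{x}{|t|})$ the displayed formula would make $\Im w_0$ negative, so the transition difficulty you flag in your closing paragraph is real and is not actually resolved by your sketch.)
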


\begin{proof}  The function $w_0 \mapsto 4b e^{4w_0} - 2tw_0$ traverses the graph $\{ a + i (\frac{\pi |t|}{4} + 4b e^{2a/|t|}): a \in \R\}$ on the upper edge $\{ \frac{a}{2|t|} + i \frac{\pi}{8}: a \in \R \}$ of the strip \eqref{strip}, while the lower edge of the strip is of course mapped to the real axis.  Since $|t| \leq \Lambda$ and $C, C'$ are large, the region $\Omega$ lies between these two curves, and so from the argument principle (and observing that the map $w_0 \mapsto 4b e^{4w_0} - 2tw_0$ sends the line segments $\{ - R + i\beta: 0 < \beta < \pi/8\}$ and $\{ R + i\beta: 0 < \beta < \pi/8\}$ well to the left and right of $\zeta$ respectively for $R$ large enough), for every $\zeta \in \Omega$ there exists exactly one $w_0$ in the strip \eqref{strip} such that $4b e^{4w_0} - 2 t w_0 = \zeta$, which is of course equivalent to \eqref{4bw}.  The uniqueness implies that the holomorphic function $w_0 \mapsto 4b e^{4w_0} - 2tw_0$ has non-zero derivative at this value of $w_0$.

Now write $\zeta = y+ix$ as per \eqref{om-def}, and write $w_0 = \alpha+i\beta$ for some $\alpha \in \R$ and $0 < \beta < \pi/8$.   Taking real and imaginary parts in \eqref{4bw} we have the system of equations
\begin{equation}\label{real}
4b e^{4\alpha} \cos 4\beta = y + 2 t \alpha
\end{equation}
and
\begin{equation}\label{imaginary}
4b e^{4\alpha} \sin 4\beta = x + 2 t \beta.
\end{equation}

To prove (i), suppose for contradiction that $\Re(4be^{4w_0}) < 1$, thus
\begin{equation}\label{hypot}
 4b e^{4\alpha} \cos 4\beta \leq 1.
\end{equation}
Since $t, \beta = O(1)$, we see from \eqref{imaginary} that $4 b e^{4\alpha} \sin 4\beta \ll x$, and hence from $\sin^2 4\beta + \cos^2 4\beta = 1$ we have
$$ 4b e^{4\alpha} \ll x$$
and hence (since $b \geq 1$) $\alpha \leq \frac{1}{4} \log_+ x + O(1)$.  In particular $-2t\alpha \leq \frac{|t|}{2} \log_+ x + O(1)$.  Inserting this into \eqref{real} and using \eqref{hypot} one then has
$$ y \leq \frac{|t|}{2} \log_+ x + O(1),$$
which contradicts \eqref{om-def} since $|t| \leq \Lambda$ and $C'$ is large.

Now we show (ii).  From \eqref{imaginary} and $\sin 4\beta \leq 1$, $t,\beta = O(1)$ one has
$$ 4be^{4\alpha} \geq x - O(1)$$
and hence on taking logarithms (and using the fact that $b \geq 1$ and $x$ is large)
\begin{equation}\label{al}
 \alpha \geq \frac{1}{4} \log \frac{x}{4b} - O\left(\frac{1}{x}\right).
\end{equation}
On the other hand, from squaring \eqref{real}, \eqref{imaginary} and summing we have
\begin{equation}\label{4b}
 (4b e^{4\alpha})^2 = (y + 2t\alpha)^2 + (x+2t\beta)^2.
\end{equation}
Crudely bounding $x+2t\beta = O(x)$, $y = O(x)$, $b \geq 1$, and $t = O(1)$ we conclude that
$$ e^{8\alpha} \ll x^2 + \alpha^2 $$
which implies that $\alpha \leq O(\log_+ x)$.  From the hypothesis $b \leq x \exp( 100\frac{x^{1/2}}{|t|})$ and \eqref{al} we also have $\alpha \geq - O( x^{1/2} / t )$, thus $t\alpha \ll x^{1/2}$.  Returning to \eqref{4b} and using $2t\beta = O(1)$ and $y \ll x^{1/2}$ we conclude that
$$ (4b e^{4\alpha})^2 =x^2 + O(x)$$
so on taking square roots
\begin{equation}\label{4bs}
4be^{4\alpha} = x + O(1)
\end{equation}
and hence on taking logarithms we have the matching upper bound
$$ \alpha \leq \frac{1}{4} \log \frac{x}{4b} + O\left(\frac{1}{x}\right)$$
to \eqref{al}.  In particular,
$$ y + 2t\alpha = y + \frac{t \log \frac{x}{4b}}{2} + O\left(\frac{1}{x}\right).$$
Inserting this and \eqref{4bs} into \eqref{real}, we have
$$ \cos 4\beta = \frac{y}{x} + \frac{t \log \frac{x}{4b}}{2x} + O\left(\frac{1}{x^{3/2}}\right) $$
and hence (by Taylor expansion of the arc cosine function)
$$ 4\beta = \frac{\pi}{2} - \frac{y}{x} - \frac{t \log \frac{x}{4b}}{2x} + O_C\left(\frac{\log^2_+ x}{x^{3/2}}\right),$$
giving (ii).

Finally, we prove (iii).  From the identity \eqref{4b} and crudely bounding $y, t \beta = O(x)$ we have
$$ (4b e^{4\alpha})^2 \ll x^2 + t^2 |\alpha|^2 $$
and hence either
$$ e^{-4\alpha} \gg \frac{b}{x}$$
or
$$ e^{-4\alpha} \gg \frac{b}{|t| |\alpha|}.$$
Under the hypothesis $b > x \exp( \frac{x^{1/2}}{|t|})$, so that $1/|t|$ and $x$ are $O(b^{1/10})$ (say), so both options force $- \alpha \geq \frac{1}{8} \log b$ as claimed.
\end{proof}

We combine the above lemma with the following asymptotic.

\begin{lemma}  Let $b$ be a complex number with $\Re b \geq 1$.  Then
\begin{equation}\label{it1}
 I_t( b, 4b ) = \sqrt{\frac{\pi}{8}} \exp( - b ) \left( \frac{1}{\sqrt{b}} + O\left( \frac{1}{|b|^{3/2}} \right) \right)
\end{equation}
using the standard branch of the square root.
\end{lemma}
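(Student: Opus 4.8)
The plan is to reduce \eqref{it1} to Stirling's formula \eqref{stirling}, along the lines announced in the introduction: the substitution $r=e^{4w}$ converts $I_0(b,\zeta)$ into a Gamma integral, and the heat-kernel identity \eqref{tzu} (applied with $z$ replaced by $-i\zeta$) lets one absorb the Gaussian factor $e^{tw^2}$ by expressing $I_t(b,4b)$ as a genuine Gaussian average of the $t=0$ integrals. First I would observe that it suffices to treat large $|b|$: for real $w$ one has $|e^{-be^{4w}+4bw}|=e^{-(\Re b)(e^{4w}-4w)}$, so (using $t\le 0$, $\Re b\ge 1$, and $e^{4w}-4w-1\ge 0$)
$$ |I_t(b,4b)|\ \le\ \int_\R e^{-(\Re b)(e^{4w}-4w)}\,dw\ =\ e^{-\Re b}\int_\R e^{-(\Re b)(e^{4w}-4w-1)}\,dw\ \le\ C_0\,e^{-\Re b},$$
with $C_0:=\int_\R e^{-(e^{4w}-4w-1)}\,dw<\infty$; since the right-hand side of \eqref{it1} is also $\ll e^{-\Re b}$ in modulus, \eqref{it1} holds trivially for $|b|=O(1)$ once the implied constant is large enough. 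So it is enough to prove $I_t(b,4b)=\sqrt{\pi/8}\,e^{-b}b^{-1/2}(1+O(1/|b|))$ for $|b|$ large with $\Re b\ge 1$.

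Next I would dispose of $t=0$. Substituting $r=e^{4w}$ in \eqref{itz-def} gives the exact identity $I_0(b,\zeta)=\tfrac14\int_0^\infty e^{-br}r^{\zeta/4-1}\,dr=\tfrac14\,b^{-\zeta/4}\Gamma(\zeta/4)$ for $\Re b,\Re\zeta>0$, hence $I_0(b,4b)=\tfrac14\,b^{-b}\Gamma(b)$; applying \eqref{stirling} (valid as $\Re b\ge 1\ge\tfrac12$) and using $b^{-b}\cdot b^{\,b-1/2}=b^{-1/2}$ and $\tfrac{\sqrt{2\pi}}{4}=\sqrt{\pi/8}$ gives the claim. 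For $t<0$ I would write, from \eqref{tzu}, $e^{tw^2+\zeta w}=\tfrac{1}{\sqrt{4\pi}}\int_\R e^{-r^2/4}e^{(\zeta+ir|t|^{1/2})w}\,dr$, multiply by $e^{-be^{4w}}$, integrate in $w$, and apply Fubini (justified because the preceding bound controls the double integral by $C_0 e^{-\Re b}\int_\R e^{-r^2/4}\,dr<\infty$), obtaining
$$ I_t(b,4b)\ =\ \frac{1}{\sqrt{4\pi}}\int_\R e^{-r^2/4}\,I_0\!\bigl(b,\,4b+ir|t|^{1/2}\bigr)\,dr\ =\ \frac{1}{4\sqrt{4\pi}}\int_\R e^{-r^2/4}\,b^{-b-ir|t|^{1/2}/4}\,\Gamma\!\bigl(b+\tfrac{ir|t|^{1/2}}{4}\bigr)\,dr.$$

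The core of the argument is the asymptotic analysis of this integral. In the range $|r|\le |b|^{3/5}$ (so that $|b+ir|t|^{1/2}/4|\asymp|b|$), I would apply \eqref{stirling} to $\Gamma(b+ir|t|^{1/2}/4)$, Taylor-expand $\log(b+ir|t|^{1/2}/4)=\log b+\log(1+ir|t|^{1/2}/(4b))$, and collect terms to get
$$ b^{-b-ir|t|^{1/2}/4}\,\Gamma\!\bigl(b+\tfrac{ir|t|^{1/2}}{4}\bigr)\ =\ \sqrt{2\pi}\,b^{-1/2}e^{-b}\,e^{-r^2|t|/(32b)}\Bigl(1+O\bigl(\tfrac{|r|^3}{|b|^2}\bigr)+O\bigl(\tfrac{|r|}{|b|}\bigr)+O\bigl(\tfrac{1}{|b|}\bigr)\Bigr).$$
Feeding this in, the leading term contributes $\tfrac{\sqrt{2\pi}}{4\sqrt{4\pi}}\,b^{-1/2}e^{-b}\int_\R e^{-(\frac14+|t|/(32b))r^2}\,dr$; the $r$-integral is a convergent complex Gaussian (its coefficient has real part $\ge\tfrac14$) equal to $\sqrt{4\pi}\,(1+O(1/|b|))$, so this is $\sqrt{\pi/8}\,b^{-1/2}e^{-b}(1+O(1/|b|))$, the desired main term (note $|e^{-r^2|t|/(32b)}|\le 1$, so extending the truncated Gaussians to all of $\R$ costs only $O(e^{-c|b|^{6/5}})$). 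The three error terms, integrated against $e^{-r^2/4}$ which absorbs the powers of $r$, contribute $O(|b|^{-3/2}e^{-\Re b})$. Finally, for the tail $|r|>|b|^{3/5}$ I would use the crude bound $|b^{-b-ir|t|^{1/2}/4}\Gamma(b+ir|t|^{1/2}/4)|=4|I_0(b,4b+ir|t|^{1/2})|\ll e^{-\Re b}$ (again from the inequality above, since $\Re(4b+ir|t|^{1/2})=4\Re b$), so the tail contributes $\ll e^{-\Re b}\int_{|r|>|b|^{3/5}}e^{-r^2/4}\,dr\ll e^{-\Re b}e^{-|b|^{6/5}/8}$, which is negligible; combining the core and tail gives the asymptotic.

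The step I expect to be the real difficulty is keeping the argument uniform over all $b$ with $\Re b\ge 1$, including those with $\Re b$ bounded but $|\Im b|$ huge. A direct Laplace/saddle-point analysis of $I_t(b,4b)$ on the real axis degenerates there: $\Re(tw^2-be^{4w}+4bw)=tw^2-(\Re b)(e^{4w}-4w)$ has only a shallow, order-one maximum at $w=0$ while the imaginary part oscillates rapidly, so one would be forced to combine Laplace with stationary phase, or to deform to a steepest-descent contour and control its non-local part. The heat-kernel route sidesteps this, since the explicit weight $e^{-r^2/4}$ localizes the $r$-integral irrespective of the size of $\Im b$ and Stirling's formula carries the oscillation of $\Gamma(b+ir|t|^{1/2}/4)$ in an explicit exponent. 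What remains delicate is then purely bookkeeping: the Stirling error terms must be tracked to the precision displayed above (not merely bounded as $o(1)$ on the core) in order to land the $O(1/|b|)$ relative error, and the implied constants should be seen to depend at worst on $t$ (equivalently on $\Lambda$); both points are routine.
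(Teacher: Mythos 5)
Your argument is correct and is essentially the paper's own proof: the paper likewise writes $e^{tw^2}$ as a Gaussian average (a Fourier representation of the heat kernel, equivalent to your use of \eqref{tzu} after the change of variables $\xi = r|t|^{1/2}/4$), applies Fubini, converts the inner $w$-integral into $\tfrac14 \exp(-(b+i\xi)\log b)\,\Gamma(b+i\xi)$ via $r=be^{4w}$, applies Stirling \eqref{stirling} on a core region, and disposes of the tail using the rapid decay of the Gaussian weight. The only differences are cosmetic (the choice of split point and your slightly more explicit retention of the quadratic term $e^{-r^2|t|/(32b)}$ in the Taylor expansion).
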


\begin{proof}  
One could establish this from Laplace's method, but we will instead use the Stirling approximation\footnote{We thank Alex Dobner for pointing out some issues in the original proof of this lemma, and suggesting a repaired proof which is reproduced here.} \eqref{stirling}.  Writing
$$ e^{tw^2} = \int_\R e^{4 i \xi w}\ d\mu(\xi)$$
where $\mu$ is the Gaussian probability measure
$$ d\mu(\xi) \coloneqq \frac{2}{\sqrt{\pi|t|}} e^{-4 \xi^2/|t|} $$
of mean zero and variance $|t|/8$, and applying Fubini's theorem, we obtain
$$ I_t(b,4b) = \int_\R \left(\int_\R \exp(- b e^{4w} + 4(b + i \xi) w)\ dw\right)\ d\mu(\xi).$$
Making the change of variables $r = be^{4w}$ (and contour shifting or analytic continuation) and the definition $\Gamma(s) = \int_0^\infty e^{-r} r^{s-1}\ dr$ of the $\Gamma$ function, we see that
$$ \int_\R \exp(- b e^{4w} + 4(b + i \xi) w)\ dw = \frac{1}{4}
\exp( - (b + i \xi) \log b ) \Gamma( b + i \xi) $$
and hence
$$ I_t(b,4b) = \frac{1}{4} \int_\R \exp( - (b + i \xi) \log b ) \Gamma( b + i \xi)\ d\mu(\xi).$$
We divide integral into regions \(|\xi| \leq 10|t|^{1/2}|b|^{1/2}\) and \(|\xi| > 10|t|^{1/2}|b|^{1/2}\). By applying Stirling's approximation, the integral over the first region becomes
\[\frac{1}{4}\int_{|\xi| \leq 10|t|^{1/2}|b|^{1/2}}{\left(1+O\left(\frac{1}{|b+i\xi|}\right)\right)\frac{\sqrt{2\pi}}{\sqrt{b+i\xi}}\exp((b+i\xi)\log(1+\frac{i\xi}{b})-b-i\xi)\, d\mu(\xi)}.\]
Note we may assume \(|b|\) is sufficiently large so that \(|\xi| < |b|/2\) in this region (the small \(|b|\) case of the lemma follows trivially from compactness), and so we have \(\frac{1}{\sqrt{b+i\xi}}=\left(1+O\left(\frac{|\xi|}{|b|}\right)\right)\frac{1}{\sqrt{b}}\) and \((b+i\xi)\log(1+\frac{i\xi}{b})=i\xi+O\left(\frac{|\xi|^2}{|b|}\right)\). Substituting these expressions into the integrand we get
\[\sqrt{\frac{\pi}{8b}}\exp(-b)\int_{|\xi| \leq 10|t|^{1/2}|b|^{1/2}}{\left(1+O\left(\frac{1+|\xi|^2}{|b|}\right)\right)\, d\mu(\xi)},\]
and now the integral evaluates to \(1+O(\frac{1}{|b|})\). Thus it will suffice to establish the tail bound
\[\int_{|\xi| > 10|t|^{1/2}|b|^{1/2}}{\exp(-(b+i\xi)\log b)\Gamma(b+i\xi)\, d\mu(\xi)} \ll \exp(-\Re(b))|b|^{-3/2}.\]
By applying the triangle inequality and bounding the integrand with
\[|\exp(-(b+i\xi)\log b)| \leq \exp(-\Re(b)\log{|b|}+\frac{\pi}{2}(|b|+|\xi|))\]
and
\[|\Gamma(b+i\xi)| \leq \Gamma(\Re(b)) \leq \exp(\Re(b)\log{|b|}-\Re(b))\]
we get the following upper bound:
\[\exp(-\Re(b)+\frac{\pi}{2}|b|) \frac{2}{\sqrt{\pi |t|}} \int_{|\xi| > 10|t|^{1/2}|b|^{1/2}}{\exp(\frac{\pi}{2}|\xi|-\frac{4\xi^2}{|t|})}.\]
Now again we assume \(|b|\) is large enough so that we have \(\frac{\pi}{2}|\xi|-\frac{4\xi^2}{|t|} \leq -\frac{\xi^2}{|t|}\) for all \(\xi\) in the given region, and hence the integral is bounded above by
\[\exp(-\Re(b)+\frac{\pi}{2}|b|) \frac{2}{\sqrt{\pi |t|}} \int_{|\xi| > 10|t|^{1/2}|b|^{1/2}}{\exp(-\frac{\xi^2}{|t|})} \ll \exp(-\Re(b)-10|b|)\]
(say), and the claim follows.
\end{proof}

From the above two lemmas and \eqref{scaling}, we have the asymptotic
\begin{equation}\label{itb}
 I_t(b,\zeta) = \sqrt{\frac{\pi}{8}} \exp( tw_0^2 - b e^{4w_0} + \zeta w_0  ) \left(\frac{1}{\sqrt{b e^{4w_0}}} + O\left( \frac{1}{|be^{4w_0}|^{3/2}} \right)\right)
\end{equation}
for any $b \geq 1$ and $\zeta \in \Omega$, where $w_0 = w_0(b,\zeta)$ is the quantity in Lemma \ref{Wfind}.

Now we can control the sum \eqref{ht1}.  As before  we assume that $z = x-iy$ where $y = \kappa \log_+ x$ for some $C' \leq \kappa \leq C$.
From \eqref{ht1} one has
\begin{equation}\label{qnt}
 H_t(x-iy) = \frac{1}{2} \sum_{n=1}^\infty Q_{t,n}
\end{equation}
where $Q_{t,n}$ is the quantity
$$ Q_{t,n} \coloneqq 2\pi^2 n^4 I_t( \pi n^2, 9+y+ix) - 3 \pi n^2 I_t( \pi n^2, 5+y+ix).$$
We first consider the estimation of $Q_n$ in the main case when $n$ is not too huge, in the sense that
\begin{equation}\label{nb}
n \leq x \exp\left( 100 \frac{x^{1/2}}{|t|} \right).
\end{equation}
In this case, if we apply Lemma \ref{Wfind}(ii) with $\zeta = 9 + y + ix$ and $b = \pi n^2$ we have that the quantity $w_0 = w_{0,t,n}$ arising in that lemma obeys the asymptotics
\begin{equation}\label{wow}
 w_0 = \frac{1}{4} \log \frac{x}{4\pi n^2} + O^\R\left(\frac{1}{x}\right)+ i \left(\frac{\pi}{8} - \frac{9+y}{4x} - \frac{t \log \frac{x}{4\pi n^2}}{8 x} + O^\R_C\left(\frac{\log^2_+ x}{x^{3/2}}\right)\right) ,
\end{equation}
which when combined with \eqref{4bw}, gives
$$ 4 b e^{4w_0} = ix + O_C( x^{1/2}).$$
In particular, the factor $\frac{1}{\sqrt{b e^{4w_0}}} + O\left( \frac{1}{|be^{4w_0}|^{3/2}} \right)$ in \eqref{itb} can be expressed as
$$ \frac{1}{\sqrt{ix/4}} \left(1 + O_C\left( x^{-1/2}\right)\right),$$
and thus by \eqref{itb}
$$ |I_t( \pi n^2, 9 + y + ix)| =\sqrt{\frac{\pi}{2x}} \exp\left( \Re\left( tw_0^2 - \frac{\zeta}{4} - \frac{tw_0}{2} + \zeta w_0\right) + O_C\left( x^{-1/2} \right)\right)$$
where we have again used \eqref{4bw}.  From \eqref{wow} (and using $t=O(1)$ and $y = O_C(\log_+ x)$ to bound some small error terms), we can calculate the quantity $\Re\left( tw_0^2 - \frac{\zeta}{4} - \frac{tw_0}{2} + \zeta w_0\right)$ to be
$$ \frac{t}{16} \log^2 \frac{x}{4\pi n^2} - \frac{t \pi^2}{64} - \frac{9+y}{4} - \frac{t}{8} \log \frac{x}{4\pi n^2} + \frac{9+y}{4} \log \frac{x}{4\pi n^2} - \frac{\pi x}{8} + \frac{9+y}{4} + \frac{t \log \frac{x}{4\pi n^2}}{8} + O_C\left( x^{-1/2} \right)$$
and thus on cancelling and gathering terms we obtain
$$ |I_t( \pi n^2, 9 + y + ix)| = \left(\frac{x}{4\pi n^2}\right)^{\frac{9+y}{4}} J_t K_{t,n} \exp\left( O_C\left( x^{-1/2} \right) \right)$$
where $J_t = J_t(x)$ and $K_{t,n} = K_{t,n}(x)$ are the positive quantities
\begin{equation}\label{j-def}
 J_t \coloneqq \sqrt{\frac{\pi}{2x}} \exp\left( \frac{t}{16} \log^2\frac{x}{4\pi} - \frac{t \pi^2}{64} - \frac{\pi x}{8} \right)
\end{equation}
and
$$ K_{t,n} \coloneqq \exp\left( -\frac{t}{4} \left(\log \frac{x}{4\pi}\right) \log n + \frac{t}{4} \log^2 n \right).$$
A similar computation gives
$$ |I_t( \pi n^2, 5 + y + ix)| = \left(\frac{x}{4\pi n^2}\right)^{\frac{5+y}{4}} J_t K_{t,n} \exp\left( O_C\left( x^{-1/2} \right) \right)$$
In particular we have the upper bound
$$ Q_{t,n} \ll n^4 \left(\frac{x}{4\pi n^2}\right)^{\frac{9+y}{4}} J_t K_{t,n}$$
for $1 \leq n \leq x \exp(100 \frac{x^{1/2}}{|t|})$, 
and for $n=1$ we have the refinement
\begin{equation}\label{q1-mag}
 |Q_{t,1}| = \left( 2\pi^2 + O_C\left( x^{-1/2} \right)\right) \left(\frac{x}{4\pi}\right)^{\frac{9+y}{4}} J_t.
\end{equation}
Using the crude bound
$$ K_{t,b} \leq \exp\left( -\frac{t}{4} \left(\log \frac{x}{4\pi}\right) \log n \right) \leq n^{-\frac{t}{4} \log x}$$
we conclude that
$$ Q_{t,n} \ll n^{-\frac{1+y}{2} - \frac{t}{4} \log x} |Q_{t,1}|.$$
Since $y \geq C' \log_+ x$, the $2 \leq n \leq x \exp( 100 \frac{x^{1/2}}{|t|} )$ terms sum to $O( |Q_{t,1}| / x )$, thus
$$ \sum_{n \leq x \exp( 100 \frac{x^{1/2}}{|t|}) } Q_{t,n} = \left(1 + O_C\left( \frac{1}{x} \right)\right) Q_{t,1}$$
Also, from \eqref{j-def} we have
$$ |Q_{t,1}| \asymp \left( \frac{x}{4\pi} \right)^{\frac{9+y}{4}} J_t = \exp\left( -\frac{\pi x}{8} + O_C( \log_+^2 x ) \right).$$
Thus, to finish the proof of \eqref{H_asymp} (or \eqref{H_asymp-cancel}), one just needs to show that the tail $\sum_{n > x \exp( 100 \frac{x^{1/2}}{|t|} )} Q_{t,n}$ is negligible compared with the main term $Q_{t,1} = \exp( -\frac{\pi x}{8} + O_C( \log_+^2 x ))$.  Suppose now that $n > x \exp( 100 \frac{x^{1/2}}{|t|} )$.  If we now apply Lemma \ref{Wfind}(iii) with $\zeta = 9 + y + ix$ and $b = \pi n^2$, and write $w_0 = \alpha+i\beta$ with $0 < \beta < \pi/8$, we have that $\alpha$ is negative with
$$ - \alpha \geq \frac{1}{8} \log n,$$
while from \eqref{itb} and \eqref{4bw} (and Lemma \ref{Wfind}(i)) we have
\begin{align*}
I_t( \pi n^2, 9 + y + ix ) &\ll 
\exp( \Re( tw_0^2 - \frac{\zeta}{4} - \frac{tw_0}{2} + \zeta w_0) ) \\
&\ll \exp( -|t| |\alpha|^2 - \frac{|t| |\alpha|}{2} +  O_C( \log_+^2 x ) ). 
\end{align*}
Similarly for $I_t( \pi n^2, 5 + y + ix )$.  Since $\log n \geq 100 \frac{x^{1/2}}{|t|}$, we have $|\alpha| \geq 10 \frac{x^{1/2}}{|t|}$ and thus
$$ |t| |\alpha|^2 \geq 10 x^{1/2} \log n.$$
In particular, $n^4 \exp( - |t| |\alpha|^2 ) \ll \exp( - 9 x^{1/2} \log n )$ and thus
$$ Q_{t,n} \ll \exp( - 8 x^{1/2} \log n + O_C( \log_+^2 x) )$$
(say). Summing, we conclude that
$$ \sum_{n >  x \exp( 100 \frac{x^{1/2}}{|t|} )} Q_{t,n} \ll \exp( - 100 x / |t| )$$
(say), which is certainly $O( |Q_1| / x )$.  Inserting these bounds into \eqref{ht1}, we conclude that
$$ H_t( x-iy ) = \left(\frac{1}{2} + O_C\left( \frac{\log^2_+ x}{x} \right)\right) Q_{t,1},$$
which already gives \eqref{H_bound}. Sending $t$ to $0$, taking absolute values, and then dividing using \eqref{q1-mag} and \eqref{j-def}, we obtain after cancelling all the $t$-independent terms that
$$ \left| \frac{H_t}{H_0} \right| (x-iy) = \left(1 + O_C\left( \frac{\log^2_+ x}{x} \right)\right) \exp\left( \frac{t}{16} \log^2\frac{x}{4\pi}  - \frac{t \pi^2}{64} \right).$$
Since the ratio $\frac{H_t}{H_0}$ is holomorphic in the region of interest, we can thus find a holomorphic branch of $\log \frac{H_t}{H_0}$ for which
$$ \Re \log \frac{H_t}{H_0}(z) - \frac{t}{16} \log^2 \frac{z}{4\pi i} = - \frac{t \pi^2}{64} + O_C\left( \frac{\log^2_+ x}{x} \right)$$
for all $z=x-iy$ in this region.  Varying $x,y$ by $O( \log_+ x)$ (adjusting the constants $C,C',C''$ slightly as necessary) and using the Borel-Carath\'eodory theorem and the Cauchy integral formula, we conclude that
$$  \frac{d}{dz} \left( \log \frac{H_t}{H_0}(z) - \frac{t}{16} \log^2 \frac{z}{4\pi i} \right) = O_C\left( \frac{\log_+ x}{x} \right),$$
which gives \eqref{logderiv_H_asymp-cancel} after a brief calculation.

\section{Riemann-von Mangoldt type formulae}\label{vm}

For any $\Lambda < t \leq 0$, the zeroes of $H_t$ are all real and simple \cite[Corollary 1]{csv}.  For any interval $I \subset \R$, let $N_t(I)$ denote the number of zeroes of $H_t$ in $I$.  The classical Riemann-von Mangoldt formula (see e.g. \cite[Theorem 9.4]{titch}), combined with \eqref{hoz}, gives the asymptotic
\begin{equation}\label{nor}
N_0([0,T]) = \Psi(T) + O(\log_+ T)
\end{equation}
for all $T \geq 0$, where we use $\Psi \colon \R^+ \to \R$ to denote\footnote{It is traditional to also insert the lower order term $-\frac{7}{8}$ here, but this term will not be of use in our analysis and will therefore be discarded.  The factors of $4\pi$ are not of particular significance and may be ignored by the reader on a first reading.} the function
\begin{equation}\label{phit}
 \Psi(T) \coloneqq \frac{T}{4\pi} \log \frac{T}{4\pi} - \frac{T}{4\pi}.
\end{equation}
For future reference, we record the derivative of $\Psi$ as
\begin{equation}\label{phi-deriv}
\Psi'(T) = \frac{1}{4\pi} \log \frac{T}{4\pi},
\end{equation}
in particular $\Psi$ is increasing for $T > 4\pi$.
Applying \eqref{nor} with $T$ replaced by $T+\alpha$ and subtracting, we conclude from the mean value theorem that
\begin{equation}\label{nor-2}
N_0([T,T+\alpha]) = \frac{\alpha\log_+ T}{4\pi}  + O( \log_+ T)
\end{equation}
for all $T \geq 0$ and $0 \leq \alpha \leq C$ for any fixed $C$, where the implied constants in the asymptotic notation are allowed to depend on $C$.  Because we are assuming the Riemann hypothesis (and hence the Lindel\"of hypothesis), one can improve this latter bound\footnote{Indeed, on the Riemann hypothesis one can improve the error term to $O\left( \frac{\log_+ T}{\log_+ \log_+ T}\right)$; see \cite[Theorem 14.13]{titch}.  However, we will not need this further refinement in this paper.} to 
\begin{equation}\label{nor-3}
N_0([T,T+\alpha]) = \frac{\alpha\log_+ T}{4\pi}  + o_{T \to \infty}( \log_+ T),
\end{equation}
a result of Littlewood (see \cite[Theorem 13.6]{titch}).  A key input in these bounds is a lower bound on $|\zeta(s)|$ when $\mathrm{Re}(s)$ is somewhat large, e.g. between $2$ and $3$; this is easily obtained through the Dirichlet series identity $\frac{1}{\zeta(s)} = \sum_{n=1}^\infty \frac{\mu(n)}{n^s}$ that is valid in this region.

Define the \emph{classical location} $\xi_j$ of the $j^{\operatorname{th}}$ zero for $j \geq 1$ to be the unique quantity in $(1,+\infty)$ solving\footnote{As with the quantity $w_0$ introduced in Lemma \ref{Wfind}, one could express $\xi_j$ explicitly in terms of the Lambert $W$ function if desired as $\xi_j = 4 \pi e \exp( W( j/e) )$, but we will not use this relation in this paper.} the equation
\begin{equation}\label{xij}
\Psi(\xi_j) = j,
\end{equation}
and extend this to negative $j$ by setting $\xi_{-j} \coloneqq -\xi_j$.  Clearly the $\xi_j$ are increasing in $j$.  For future reference we record the following bounds on the $\xi_j$:

\begin{lemma}[Spacing of the classical locations]  \ 
\begin{itemize}
\item[(i)]  For any $j \geq 1$, one has
\begin{equation}\label{xia}
 \xi_j = (1 + o_{|j| \to \infty}(1)) \frac{4\pi j}{\log_+ j}
\end{equation}
In particular, $\xi_j \asymp \frac{j}{\log_+ j}$ and $\log_+ \xi_j \asymp \log_+ j$.  
\item[(ii)] For any $j,k \in \Z^*$, one has
\begin{equation}\label{xidif}
|\xi_k - \xi_j| \asymp \frac{|k-j|}{\log_+ (|\xi_j| + |\xi_k|)}.
\end{equation}
\item[(iii)] If $1 \leq j \asymp k$, then one has the more precise approximation
\begin{equation}\label{add-2}
 \xi_k - \xi_j = \frac{4\pi  (k-j)}{\log \xi_j} + O\left( \frac{|k-j|^2}{j \log^2 \xi_j} \right).
\end{equation}
Of course, the implied constant in the error term in \eqref{add-2} can depend on the implied constants in the hypothesis $j \asymp k$.
\end{itemize}
\end{lemma}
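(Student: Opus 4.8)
The plan is to deduce all three parts directly from the defining relation $\Psi(\xi_j)=j$ together with the explicit derivatives of $\Psi$ recorded in \eqref{phit}, \eqref{phi-deriv}: $\Psi'(T)=\frac{\log T}{4\pi}$ (positive and increasing for $T>1$) and $\Psi''(T)=\frac{1}{4\pi T}>0$. Since all the $\xi_j$ lie in $(1,\infty)$ in absolute value, these formulae are available on every interval $[\xi_j,\xi_k]$ we shall use, on which $\Psi$ is in particular convex. Throughout, the finitely many indices with $|j|$ or $|k|$ bounded by a fixed constant contribute only $O(1)$ and can be absorbed into implied constants, so one may always take the relevant indices large and positive when convenient.

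For (i): $\Psi$ increasing with $\Psi(T)\to\infty$ forces $\xi_j\to\infty$ as $j\to\infty$. Setting $L_j=\log(\xi_j/4\pi)$, the relation $\Psi(\xi_j)=j$ reads $\frac{\xi_j}{4\pi}(L_j-1)=j$, so $\xi_j=\frac{4\pi j}{L_j-1}$ and hence $L_j=\log j-\log(L_j-1)$. From this identity one reads off first $L_j\le\log j$, then $\log(L_j-1)\le\log\log j$, hence $L_j\ge\log j-\log\log j$, so $L_j=(1+o_{j\to\infty}(1))\log j$; substituting back into $\xi_j=4\pi j/(L_j-1)$ gives \eqref{xia}. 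The consequences $\xi_j\asymp j/\log_+ j$ and $\log_+\xi_j=(1+o(1))\log j\asymp\log_+ j$ follow immediately, and pass to negative indices via $\xi_{-j}=-\xi_j$.

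For (ii): the symmetries $j\leftrightarrow k$ and $(j,k)\mapsto(-j,-k)$ both fix all three quantities in \eqref{xidif}, so we may assume $k\ge|j|$ and $k>0$. Then $|\xi_j|\le\xi_k$, so by (i) $\log_+(|\xi_j|+|\xi_k|)\asymp\log_+\xi_k\asymp\log_+ k$, and it suffices to prove $|\xi_k-\xi_j|\asymp|k-j|/\log_+ k$. If $j<0$, then $|\xi_k-\xi_j|=\xi_k+\xi_{|j|}$ with $\xi_k\le\xi_k+\xi_{|j|}\le2\xi_k$, while $|k-j|=k+|j|\asymp k$ (as $|j|\le k$), so the claim follows from $\xi_k\asymp k/\log_+ k$. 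If $0<j\le k$, convexity of $\Psi$ gives $\frac{4\pi(k-j)}{\log\xi_k}\le\xi_k-\xi_j\le\frac{4\pi(k-j)}{\log\xi_j}$; the left inequality already yields $\xi_k-\xi_j\gg(k-j)/\log_+ k$, and for the matching upper bound I split according to whether $j>k/2$ (then $j\asymp k$, so $\log\xi_j\asymp\log_+ k$ and the convexity bound suffices) or $j\le k/2$ (then I discard the convexity bound in favour of $\xi_k-\xi_j\le\xi_k\ll k/\log_+ k\asymp(k-j)/\log_+ k$).

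For (iii): with $1\le j\asymp k$, apply Taylor's theorem to $\Psi$ between $\xi_j$ and $\xi_k$: there is $\xi^*$ between them with $k-j=\Psi(\xi_k)-\Psi(\xi_j)=\frac{\log\xi_j}{4\pi}(\xi_k-\xi_j)+\frac{1}{8\pi\xi^*}(\xi_k-\xi_j)^2$, hence $\xi_k-\xi_j=\frac{4\pi(k-j)}{\log\xi_j}-\frac{(\xi_k-\xi_j)^2}{2\xi^*\log\xi_j}$. Using (i) ($\xi^*\asymp j/\log_+ j$ and $\log\xi_j\asymp\log_+ j$) and (ii) ($|\xi_k-\xi_j|\ll|k-j|/\log\xi_j$, valid since $\log_+(|\xi_j|+|\xi_k|)\asymp\log\xi_j$ when $j\asymp k$), the last term is $O(|k-j|^2/(j\log^2\xi_j))$, which is \eqref{add-2}. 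The whole argument is elementary and the only real content is bookkeeping — knowing when a $\log\xi$ may be traded for $\log_+$ of an index (which is exactly where (i) enters) and, in (ii), noticing that the convexity upper bound degenerates when $j\ll k$ so that one must fall back on the trivial bound $\xi_k-\xi_j\le\xi_k$. I expect the fiddliest single point to be the two-step bootstrap in (i) that pins $L_j$ down to within a factor $1+o(1)$.
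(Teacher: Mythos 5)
Your proof is correct and follows essentially the same route as the paper: a bootstrap from the defining relation $\Psi(\xi_j)=j$ for (i), a mean-value/convexity argument with a case split for (ii), and a second-order Taylor refinement for (iii). The only differences (two-sided convexity bounds in place of the mean value theorem with $T\asymp\xi_j$, and the Lagrange remainder in place of expanding $1/\log T$) are cosmetic.
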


\begin{proof} If $j \geq 1$, then from \eqref{phit} one has
\begin{equation}\label{xijb}
 \xi_j \log_+ \xi_j = (1 + o_{j \to \infty}(1)) 4\pi j
\end{equation}
which implies that $j^{1/2} \ll \xi_j \ll j$ (say), which implies that $\log_+ \xi_j \asymp \log_+ j$; substituting this back into \eqref{xijb}
yields
$$ \xi_j \asymp \frac{j}{\log_+ j}.$$
This in turn implies that $\log_+ \xi_j = (1 + o_{j \to \infty}(1)) \log_+ j$, and using \eqref{xijb} one last time gives \eqref{xij}. 

Now we obtain (ii).  If $j,k$ have opposing sign, then \eqref{xidif} follows from \eqref{xia}, so by symmetry we may assume that $j,k$ are both positive.  If $j$ is much larger than $k$ or vice versa, then the bound \eqref{xidif} follows from \eqref{xia} and the triangle inequality, so we may now restrict attention to the case $1 \leq j \asymp k$.  The estimates \eqref{xidif} and \eqref{add-2} are trivial for $j=O(1)$, so we may assume $j$ to be large. 

From \eqref{xij} we have
$$ \Psi(\xi_k) - \Psi(\xi_j) = k-j$$
and hence by the mean value theorem and \eqref{phi-deriv} we have
$$ \frac{1}{4\pi} \log \frac{T}{4\pi} (\xi_k - \xi_j) = k-j$$
for some $T$ between $\xi_k$ and $\xi_j$.  From \eqref{xia} we see that $T \asymp \xi_j$, and so \eqref{xidif} follows.  Furthermore, we can conclude that
$$ T = \xi_j + O( |\xi_k-\xi_j| ) = \xi_j + O\left( \frac{|k-j|}{\log \xi_j} \right) $$
and hence
$$ \log T = \log \xi_j + O\left( \frac{|k-j|}{\xi_j \log \xi_j} \right) = \log \xi_j + O\left( \frac{|k-j|}{j} \right)$$
and
$$ \frac{1}{\log T} = \frac{1}{\log \xi_j} + O\left( \frac{|k-j|}{j \log^2 \xi_j} \right)$$
giving \eqref{add-2}.
\end{proof}

Applying \eqref{nor} to $T = x_j(0)$ for some $j \geq 1$, we conclude in particular that
$$ \Psi( x_j(0) ) - \Psi(\xi_j) = O(\log_+ x_j(0)).$$
From \eqref{phi-deriv} and the mean value theorem\footnote{One may wish to treat the bounded case $j=O(1)$ separately, to avoid the minor issue that $\Psi(T)$ becomes decreasing for $T<1$.} we conclude that
\begin{equation}\label{xxj}
 x_j(0) = \xi_j + O(1) 
\end{equation}
for all $j \geq 1$, and hence for all $j \in \Z^*$ by symmetry.  In particular, from \eqref{xia} and the fact that $x_1(0) > 0$ we conclude that
$$ x_j(0) \asymp \frac{j}{\log_+ \xi_j} \asymp \frac{j}{\log_+ j}  $$
for all $j \geq 1$.

In a similar vein, if $1 \leq j < k \leq j + \log_+ j$, then from applying \eqref{nor-3} with $T = x_j(0)$ and $\alpha$ equal to (or slightly less than) $x_k(0) - x_j(0)$, we have
$$ k-j = \frac{x_k(0) - x_j(0)}{4\pi} \log_+ \xi_j + o_{j \to \infty}( \log_+ \xi_j )$$
and hence
$$ x_k(0) - x_j(0) = \frac{4\pi (k-j)}{\log_+ \xi_j}  + o_{j \to \infty}(1).$$
Informally, this asserts that the zeroes $x_j(0)$ behave like an arithmetic progression of spacing $\frac{4\pi}{\log_+ \xi_j}$ at spatial scales between $o(1)$ and $1$.  (In fact, when combined with \eqref{xxj} and \eqref{add-2}, we see that this behavior persists for all scales between $o(1)$ and $o(\xi_j)$.)

In this section we use the asymptotics on $H_t$ obtained in the previous section to establish analogous, but weaker, bounds for the zeroes $x_j(t)$ of the functions $H_t$, in which we lose an additional logarithm factor in the error estimates.

\begin{theorem}[Riemann-von Mangoldt type formulae]\label{rmt}  Let $\Lambda < t \leq 0$, $T > 0$, and let $0 \leq \alpha \leq C$ for some $C>0$.  Then one has
\begin{equation} \label{Big_O_Bound} 
N_t([0,T]) = \Psi(T) + O(\log^2_+ T)
\end{equation}
and
\begin{equation}\label{little_o_bound}
 N_t([T,T+\alpha \log_+ T]) = \frac{\alpha\log^2_+ T}{4\pi}  + o_{T \to \infty}(\log^2_+ T).
\end{equation}
The decay rate in the $o_{T \to \infty}()$ error term is permitted to depend on $C$ but is otherwise uniform in $\alpha$.
\end{theorem}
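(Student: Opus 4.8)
The plan is to count the zeroes of $H_t$ by the argument principle, feeding in the asymptotics for $H_t$ and $H_t'/H_t$ from Lemma~\ref{coarse_H_estimates}.

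For \eqref{Big_O_Bound}, we may assume $T$ is large and not a zero of $H_t$: a nearby zero can be dodged by moving $T$ by $O(1)$, which alters both sides by $O(\log_+^2 T)$ since a crude Jensen-type bound (comparing $|H_t|$ at $T + 2i\kappa_0\log_+ T$, where \eqref{H_asymp} gives a lower bound, against its maximum over a disc of radius $\asymp \log_+ T$, where \eqref{H_bound} gives an upper bound) shows $N_t([T-1,T+1]) = O(\log_+^2 T)$. Fix a large constant $\kappa_0 \ge C'$ and let $\Gamma$ be the positively oriented boundary of the region $\{ z : 0 \le \Re z \le T,\ |\Im z| \le \kappa_0\log_+(\Re z) + h\}$ for a suitably large constant $h>0$; the upper and lower boundary curves have $|\Im z| \asymp \log_+(\Re z)$, so \eqref{H_asymp} and \eqref{logderiv_H_asymp} apply on them, while the mildly bent shape avoids the bad aspect ratio one would get from a plain rectangle at small $\Re z$. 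Since the zeroes of $H_t$ are real and avoid the origin, $H_t$ is non-vanishing on $\Gamma$ and $\Gamma$ encircles exactly the $N_t([0,T])$ zeroes in $(0,T)$, whence $2\pi N_t([0,T]) = \Delta_\Gamma \arg H_t$. On the left edge $\Re z = 0$ the functional equation $H_t(\bar z) = \overline{H_t(z)}$ together with evenness forces $H_t$ to be real and non-vanishing, so the variation there is $0$. On the two boundary curves, \eqref{logderiv_H_asymp} gives $\Im\frac{H_t'}{H_t}(z) = \frac14\log\frac{|z|}{4\pi} + O(\frac{\log_+ x}{x})$ with $x = \Re z$ once $x$ exceeds a fixed constant $X_0$, and integrating yields $\int \frac14\log\frac{x}{4\pi}\,dx + O(\log_+^2 T) = \pi\Psi(T) + O(\log_+^2 T)$ from each curve (the $O(\log_+^2 T)$ absorbing $\int_{X_0}^T \frac{\log_+ x}{x}\,dx \asymp \log_+^2 T$ and the contribution of the bending); the bounded piece $x \le X_0$ of each curve lies in a fixed compact set disjoint from the real axis, on which $\frac{H_t'}{H_t}$ is bounded uniformly in $\Lambda < t \le 0$ (using that $H_\Lambda$ also has only real zeroes), so it contributes $O(1)$. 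On the right edge $\Re z = T$, the part with $|\Im z| \gtrsim \log_+ T$ has $O(1)$ integrand by \eqref{logderiv_H_asymp}, hence contributes $O(\log_+ T)$, while the variation of $\arg H_t$ along the short vertical segment near the real axis is bounded, by a standard Jensen-type argument, by $\log\bigl(\max_{|z - z_0| \le R}|H_t| / |H_t(z_0)|\bigr)$ with $z_0 = T + 2i\kappa_0\log_+ T$ and $R \asymp \log_+ T$, which is $O(\log_+^2 T)$ by \eqref{H_bound} and \eqref{H_asymp}. Summing the four contributions gives $2\pi N_t([0,T]) = 2\pi\Psi(T) + O(\log_+^2 T)$.

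For \eqref{little_o_bound}, when $t=0$ the estimate follows from Littlewood's theorem exactly as \eqref{nor-3} does: under the Riemann hypothesis the error function is itself $o(\log_+ T)$, so its oscillation over any interval is $o(\log_+ T) = o(\log_+^2 T)$. For $t<0$, subtracting two instances of \eqref{Big_O_Bound} only yields error $O(\log_+^2 T)$, so we instead apply the argument principle on the rectangle $R = [T, T+\alpha\log_+ T] \times [-\kappa_0\log_+ T, \kappa_0\log_+ T]$ with $\kappa_0$ a large constant. On its horizontal edges \eqref{logderiv_H_asymp} again applies, and since the interval has length $\alpha\log_+ T = o(T)$ its error term integrates to $o(1)$, leaving the main term $\frac{1}{2\pi}\cdot 2\int_T^{T+\alpha\log_+ T} \frac14\log\frac{x}{4\pi}\,dx = \Psi(T+\alpha\log_+ T) - \Psi(T)$, which by \eqref{phi-deriv} equals $\frac{\alpha\log_+^2 T}{4\pi} + o(\log_+^2 T)$. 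On the vertical edges, the portion with $|\Im z| \gtrsim \log_+ T$ is handled by \eqref{logderiv_H_asymp}, and the contributions at $\Re z = T$ and $\Re z = T+\alpha\log_+ T$ cancel up to $o(1)$ because the two logarithmic derivatives there differ by $O(\frac{\log_+ T}{T})$, the abscissae differing by $o(T)$. This reduces everything to controlling the contribution of the two short vertical segments near the real axis; equivalently, one must show that the error term $S_t(x) := N_t([0,x]) - \Psi(x)$ in \eqref{Big_O_Bound}, although only known to be $O(\log_+^2 x)$ pointwise, does not oscillate by more than $o(\log_+^2 T)$ over intervals of length $O(\log_+ T)$. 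This is the main obstacle, and I expect it to need a genuine Littlewood-type input: one would apply Littlewood's lemma to $H_t$ on the rectangle $R$ (expressing $\sum_{x_j \in [T,T+\alpha\log_+ T]} (x_j - T)$ through boundary integrals of $\log H_t$, the parts away from the real axis being controlled by \eqref{H_asymp}) and invoke the Riemann hypothesis (which holds since $\Lambda < 0$), or else transfer the required smoothness from the $t=0$ case using the bound $\frac{H_t'}{H_t} - \frac{H_0'}{H_0} = O(\frac{\log_+ x}{x})$ from Section~\ref{asymp-sec} on the region away from the real axis, supplemented by an additional argument near it.
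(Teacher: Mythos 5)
Your proof of \eqref{Big_O_Bound} is essentially the paper's: the argument principle on a contour hugging $|\Im z| \asymp \log_+(\Re z)$, the asymptotic \eqref{logderiv_H_asymp} on the bent part giving the main term $\Psi(T)+O(\log_+^2 T)$, and a Jensen-type count of sign changes of $\Re e^{i\theta}H_t$ to bound the argument variation on the short vertical segment at $\Re z = T$ by $O(\log_+^2 T)$ via \eqref{H_bound} and \eqref{H_asymp}. That part is correct.

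For \eqref{little_o_bound}, however, there is a genuine gap, and you have located it yourself: after the rectangle contour, the horizontal edges and the far portions of the vertical edges, everything reduces to showing that the variation of $\arg H_t$ along the two short vertical segments within distance $O(\log_+ T)$ of the real axis is $o(\log_+^2 T)$, rather than the $O(\log_+^2 T)$ that the Jensen argument from part (i) yields; this step is left as ``I expect it to need a genuine Littlewood-type input,'' i.e.\ it is not proved. Neither of the routes you gesture at closes it as stated: Littlewood's proof that $S(T)=o(\log T)$ for $\zeta$ leans on the Dirichlet series/Euler product to control $\log \zeta$ to the right of the critical line, and (as the Remark after Lemma \ref{coarse_H_estimates} stresses) no usable series expansion of $\frac{1}{H_t}$ or $\frac{H_t'}{H_t}$ is available for $t<0$; and the comparison bound $\frac{H_t'}{H_t}-\frac{H_0'}{H_0} = O\left(\frac{\log_+ x}{x}\right)$ is only established in the region $\Im z \leq -C'\log_+ x$, so it gives no information near the real axis, which is exactly where the difficulty sits. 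The paper supplies the missing ingredient by a soft compactness argument rather than a quantitative Littlewood lemma: assuming \eqref{little_o_bound} fails along sequences $T_n \to \infty$, $t_n$, $\alpha_n$, it forms $F_n(z) = \frac{1}{\log_+^2 T_n}\log\frac{H_{t_n}(T_n+z\log_+ T_n)}{H_{t_n}(T_n - i\kappa\log_+ T_n)}$ on the open lower half-plane, which is well defined and locally uniformly bounded precisely because $t_n > \Lambda$ makes $H_{t_n}$ zero-free there, together with \eqref{H_bound} and \eqref{H_asymp}; Montel's theorem, \eqref{logderiv_H_asymp}, and unique continuation force $F_n(z) \to \frac{z+i\kappa}{4}$ locally uniformly, which yields the asymptotics \eqref{hatn}, \eqref{htn} with $o_{n\to\infty}(1)$ errors at every fixed rescaled depth $\delta>0$ below the real axis (the analogue for $H_t$ of ``RH implies Lindel\"of''). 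Feeding \eqref{hatn} into the same Jensen-type bound makes each short vertical segment contribute only $\left(o_{n\to\infty}(1)+O(\sqrt{\delta})\right)\log_+^2 T_n$, while \eqref{htn} gives the main term on the horizontal edge at depth $\delta$; letting $\delta \to 0$ gives the contradiction. Note also that this argument is uniform in $t \in (\Lambda,0]$, whereas your split into a $t=0$ case and a $t<0$ case would need separate care on that point. So to complete your proof you must add some such upgrade of \eqref{H_bound}--\eqref{H_asymp} to an asymptotic valid at depth $\delta \log_+ T$ (by normal families as in the paper, or by a quantitative Borel--Carath\'eodory argument exploiting the zero-free lower half-plane); the rectangle computation alone does not yield \eqref{little_o_bound}.
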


Repeating the previous analysis, we conclude

\begin{corollary}[Macroscopic structure of zeroes]\label{macro}  Let $\Lambda < t \leq 0$.  Then one has
\begin{equation}\label{xji}
 x_j(t) = \xi_j + O( \log_+ \xi_j ) 
\end{equation}
for all $j \in \Z^*$; in particular
\begin{equation}\label{xji0}
 x_j(t) \asymp \frac{j}{\log_+ \xi_j} \asymp \frac{j}{\log_+ j}
\end{equation}
for all $j \geq 1$.  We also have
\begin{equation}\label{add}
x_k(t) - x_j(t) = \frac{4\pi(k-j) }{\log_+ \xi_j} + o_{j \to \infty}(\log_+ \xi_j)
\end{equation}
whenever $1 \leq j < k \leq j + \log_+^2 \xi_j$.
\end{corollary}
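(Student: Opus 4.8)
The plan is to rerun the computation that produced \eqref{xxj} and the microscopic spacing estimate preceding Theorem \ref{rmt} in the $t=0$ case, but feeding in the (weaker) formulae \eqref{Big_O_Bound} and \eqref{little_o_bound} in place of \eqref{nor} and \eqref{nor-3}, and using the spacing lemma for the classical locations $\xi_j$ already in hand. By the symmetries $x_{-j}(t)=-x_j(t)$, $\xi_{-j}=-\xi_j$ (and $\log_+\xi_{-j}=\log_+\xi_j$) it suffices to treat $j\geq 1$. The bounded range $j=O(1)$ is handled separately: \eqref{Big_O_Bound} applied at $T=x_j(t)$ reads $\Psi(x_j(t))=j+O(\log_+^2 x_j(t))$, and since $\Psi(T)/\log_+^2 T\to\infty$ this forces $x_j(t)=O_j(1)$ uniformly in $t\in(\Lambda,0]$, whence \eqref{xji} and \eqref{xji0} are trivial for such $j$; so I assume $j$ large.

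For the macroscopic bounds \eqref{xji}, \eqref{xji0}, I would use that the positive zeroes of $H_t$ are exactly $0<x_1(t)<x_2(t)<\cdots$ and are simple, so $N_t([0,x_j(t)])=j$, and hence \eqref{Big_O_Bound} at $T=x_j(t)$ gives $\Psi(x_j(t))=j+O(\log_+^2 x_j(t))=\Psi(\xi_j)+O(\log_+^2 x_j(t))$ by \eqref{xij}. Exactly as in the classical case this first pins down $x_j(t)\asymp j/\log_+ j\asymp\xi_j$, hence $\log_+ x_j(t)\asymp\log_+\xi_j\asymp\log_+ j$ uniformly in $t$; in particular $x_j(t)\to\infty$ uniformly in $t$ as $j\to\infty$. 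Feeding this back, the mean value theorem and \eqref{phi-deriv} — noting $\Psi'(\theta)=\tfrac{\log\theta}{4\pi}\asymp\log_+\xi_j$ for $\theta$ between $x_j(t)$ and $\xi_j$, both $\asymp\xi_j$ — give $|x_j(t)-\xi_j|\ll\log_+^2\xi_j/\log_+\xi_j\ll\log_+\xi_j$, which is \eqref{xji}; combining with \eqref{xia} yields \eqref{xji0}.

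For the microscopic spacing \eqref{add}, I would fix $1\leq j<k\leq j+\log_+^2\xi_j$; since $\log_+^2\xi_j=o(j)$ we have $k\asymp j$, and \eqref{xji} together with \eqref{xidif} gives
$$x_k(t)-x_j(t)=(\xi_k-\xi_j)+O(\log_+\xi_j)\ll\frac{k-j}{\log_+\xi_j}+\log_+\xi_j\ll\log_+\xi_j\asymp\log_+ x_j(t),$$
so $\alpha\coloneqq(x_k(t)-x_j(t))/\log_+ x_j(t)=O(1)$. Applying \eqref{little_o_bound} at $T=x_j(t)$ with this $\alpha$ (so that $T+\alpha\log_+ T=x_k(t)$), and using $N_t([x_j(t),x_k(t)])=k-j+1=k-j+o_{j\to\infty}(\log_+^2\xi_j)$ together with $\log_+ x_j(t)\asymp\log_+\xi_j$, one obtains
$$k-j=\frac{(x_k(t)-x_j(t))\log_+ x_j(t)}{4\pi}+o_{j\to\infty}(\log_+^2\xi_j).$$
Solving for $x_k(t)-x_j(t)$ turns the error into $o_{j\to\infty}(\log_+\xi_j)$; finally \eqref{xji} gives $\log_+ x_j(t)=\log_+\xi_j+o(1)$, and since $k-j\leq\log_+^2\xi_j$ replacing $\log_+ x_j(t)$ by $\log_+\xi_j$ in the main term costs only $O\big(\tfrac{k-j}{\log_+^2\xi_j}\big)=O(1)=o(\log_+\xi_j)$. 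This is \eqref{add}.

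I do not expect any real difficulty here — the corollary is exactly a reprise of the $t=0$ analysis. The points needing a little care are: the bootstrap in the first step, where one must extract the crude size $x_j(t)\asymp j/\log_+ j$ before the mean value theorem can be applied with the sharp $\Psi'(\theta)\asymp\log_+\xi_j$ (and the separate treatment of $j=O(1)$ to avoid $\Psi$ being decreasing near $T=1$); keeping every $o_{j\to\infty}(\cdot)$ uniform over $t\in(\Lambda,0]$, which works because the inputs from Theorem \ref{rmt} are already uniform in $t$ and because $x_j(t)\to\infty$ uniformly in $t$; and the bookkeeping in the second step that absorbs the off-by-one in the zero count, the division by $\log_+ x_j(t)$, and the replacement of $\log_+ x_j(t)$ by $\log_+\xi_j$ into the stated error, all of which hinge on the constraint $k-j\leq\log_+^2\xi_j$.
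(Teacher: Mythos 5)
Your proposal is correct and follows essentially the same route as the paper, which proves Corollary \ref{macro} precisely by ``repeating the previous analysis'' — i.e., rerunning the $t=0$ derivation of \eqref{xxj} and the spacing estimate with \eqref{Big_O_Bound} and \eqref{little_o_bound} substituted for \eqref{nor} and \eqref{nor-3}. The details you flag (the bootstrap to $x_j(t)\asymp j/\log_+ j$ before the mean value theorem, uniformity in $t$, choosing $\alpha=(x_k(t)-x_j(t))/\log_+ x_j(t)=O(1)$, and trading $\log_+ x_j(t)$ for $\log_+\xi_j$ at cost $o(\log_+\xi_j)$) are exactly the bookkeeping the paper leaves implicit, and they check out.
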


Informally, this corollary asserts that the zeroes $x_j(t)$ behave like an arithmetic progression of spacing $\frac{4\pi}{\log_+ \xi_j}$ at spatial scales between $o(\log_+ \xi_j)$ and $o(\xi_j)$.  This level of spatial resolution is worse by a factor of $\log_+ \xi_j$ than what one can achieve for $x_j(0)$, but will still (barely) be enough for our applications.  We remark that a significantly sharper estimate (with an error term of just $O(1)$ in the analog of \eqref{Big_O_Bound}) is available for any fixed $t>0$; see\footnote{Added in press: even sharper estimates have recently been obtained in \cite[Theorem 1.5]{polymath}.} \cite[Theorem 1.4]{kkl}.

We now turn to the proof of the two bounds in Theorem \ref{rmt}. 

\begin{proof}[Proof of \eqref{Big_O_Bound}]
We make use of the argument principle in exactly the same manner as in the classical proof of the Riemann-von Mangoldt formula. By perturbing $T$ slightly if necessary, we may assume that $T$ is not a zero of $H_t$.  Let $\kappa>0$ be a sufficiently large absolute constant. Then the argument principle yields
$$
N_t([0,T]) = \frac{1}{2\pi i} \int_\Gamma \frac{H_t'}{H_t}(z)\, dz,
$$
where $\Gamma$ is the counterclockwise contour carved out by a straight line from $i \kappa \log_+ 0 = i\kappa \log 2$ to $- i \kappa \log_+ 0 = -i \kappa \log 2$, then along the curve $\Gamma_I$ parameterized by $x - i \kappa \log_+ x$ for $x \in [0,T]$, then along the line $\Gamma_{II}$ from $T - i \kappa \log_+ T$ to $T$, then along the vertical line conjugate to $\Gamma_{II}$ and the curve conjugate to $\Gamma_I$, leading back to $i \log 2$.  As the integrand is odd, the, the integral along the line from $i\kappa \log_+ 0$ to $-i\kappa \log_+ 0$ vanishes. Using the symmetry $H_t(\overline{z})  = \overline{H_t(z)}$, we thus have
$$
N_t([0,T]) = \frac{1}{\pi} \Im \left( \int_{\Gamma_{I}} + \int_{\Gamma_{II}}\right) \frac{H_t'}{H_t}(z)\, dz.
$$
From \eqref{logderiv_H_asymp}, \eqref{phi-deriv} one sees that
$$ \frac{1}{\pi} \frac{H_t'}{H_t}(z) = \frac{d}{dz} (\Psi(iz)) +  O\left(\frac{\log_+ x}{x}\right)$$
for $z = x - i \kappa \log_+ x$ on $\Gamma_I$ (extending $\Psi$ to the right half-plane using the standard branch of the logarithm), and hence by the fundamental theorem of calculus
\begin{align*}
\frac{1}{\pi} \Im \int_{\Gamma_I} \frac{H_t'}{H_t}(z)\, dz &= \Im \Psi(iT + \kappa \log_+ T) - \Psi(\log_+ 0) + O(\log_+^2 T) \\
&= \Psi(T) + O(\log_+^2 T).
\end{align*}
On the other hand, if we let $\theta$ be a phase so that $e^{i \theta} H_t(T - i \kappa \log_+ T)$ is real and positive, then
$$
\left| \Im \int_{\Gamma_{II}} \frac{H_t'}{H_t}(z)\, dz \right| \leq \pi (m+1),
$$
where $m$ is the number of zeroes of $\Re e^{i\theta} H_t(z)$ along the contour $\Gamma_{II}$, since the left hand side is the change in $\arg e^{i\theta} H_t(z)$ as $z$ varies over this contour, and for each increment of $\pi$ in the value of $\arg e^{i\theta} H_t(z)$, we must have that $\Re e^{i\theta} H_t(z)$ is zero for some $z$. Note that the number of zeroes of $\Re H_t(z)$ along this contour is the same as the number of zeroes of 
$$
g(s) \coloneqq \tfrac{1}{2}( e^{-i\theta} H_t(is + T) + e^{i\theta} H_t(-is + T))
$$
as $s$ ranges along the line from $0$ to $\kappa \log_+ T$. Hence $m$ is no more than the number of zeroes $m'$ of $g(s)$ in the disc of radius $\kappa \log_+ T$ centered at $\kappa \log_+ T$. 

The count $m'$ we can estimate with Jensen's formula as follows. Let $\mathcal{M}$ be the maximum of $g(s)$ in a disc centered at $\kappa \log_+ T$ of radius $2\kappa \log_+ T$. Using \eqref{H_bound} and the conjugate symmetry of $H_t(z)$, we have
$$
\mathcal{M} \ll e^{-\tfrac{\pi}{8} T + O(\log_+^2 T)}.
$$
Since from \eqref{H_asymp} we have $g(\kappa \log_+ T) = e^{i\theta} H_t(T - i\kappa \log_+ T) = e^{-\tfrac{\pi}{8} T + O(\log_+^2 T)}$, it therefore follows from Jensen's formula (see e.g. \cite[Lemma 6.1]{MoVa}) that
$$
m' \ll \log_+^2 T.
$$
This induces a corresponding bound on the integral of $\frac{H'_t}{H_t}$ over $\Gamma_{II}$ and therefore establishes the claimed estimate for $N_t([0,T])$.
\end{proof}

\begin{proof}[Proof of \eqref{little_o_bound}]  We will use a ``limiting profile argument'' (also known as a ``compactness argument'' or ``normal families argument''), in which one extracts and then studies a limit of suitably rescaled versions of a family of analytic functions to conclude asymptotic information about these functions.  We remark that this sort of argument can also be used in a similar fashion to deduce the Lindel\"of hypothesis from the Riemann hypothesis: see Theorem 1 of {\tt terrytao.wordpress.com/2015/03/01}.

Suppose for contradiction that this claim failed, then there exists a sequence $T_n \to \infty$, 
and bounded sequences $\Lambda < t_n \leq 0$ and $0 \leq \alpha_n \leq C$, as well as an $\eps>0$, such that
\begin{equation}\label{natn}
 \left| N_{t_n}([T_n,T_n+\alpha_n \log_+ T_n]) - \frac{\alpha_n \log_+^2 T_n}{4\pi}\right| > \eps \log^2_+ T_n
\end{equation}
for all $n$.  By perturbing $T_n$ slightly we may assume that $H_{t_n}$ does not vanish at $T_n$ or $T_n + \alpha_n$.

Let $\kappa>0$ be a sufficiently large absolute constant. By the hypothesis $\Lambda < t_n$, the function $H_{t_n}$ has no zeroes in the lower half-plane.  Thus we can define holomorphic functions $F_n$ on the lower half-plane by the formula
$$ F_n(z) \coloneqq \frac{1}{\log^2_+ T_n} \log \frac{H_{t_n}( T_n + z \log_+ T_n )}{H_{t_n}( T_n - i \kappa \log_+ T_n )} $$
with the branch of the logarithm chosen so that $F_n(-i\kappa) = 0$.  From \eqref{H_bound} we see that the $F_n$ are uniformly bounded on any compact subset of the lower half-plane.  Thus, by Montel's theorem (see \cite[Sec 3.2]{StSh}), we may pass to a subsequence and assume that the $F_n$ converge locally uniformly to a holomorphic function $F$ on the lower half-plane; since the $F_n$ all vanish on $-i\kappa$, $F$ does also.  Then by the Cauchy integral formula, the derivatives
$$ F'_n(z) = \frac{1}{\log_+ T_n} \frac{H'_{t_n}}{H_{t_n}}( T_n + z \log_+ T_n) $$
converge locally uniformly to $F'$.  Comparing this with \eqref{logderiv_H_asymp}, we conclude that
$$ F'(z) = \frac{1}{4}$$
whenever the imaginary part of $z$ is sufficiently large and negative.  By unique continuation, we thus have $F'(z)=\frac{1}{4}$ for all $z$ in the lower half-plane; as $F$ vanishes on $-i\kappa$, we thus have
$$ F(z) = \frac{z+i\kappa}{4}$$
on the lower half-plane.  Since $F_n$ converges locally uniformly to $F$, we conclude that
\begin{equation}\label{hatn}
 H_{t_n}( T_n + z \log_+ T_n ) = H_{t_n}( T_n - i \kappa \log_+ T_n ) \exp\left( \frac{z + i\kappa + o_{n \to \infty}(1)}{4} \log^2_+ T_n \right)
\end{equation}
uniformly for $z$ in a compact subset of the lower half-plane.  Similarly, since $F'_n$ converges locally to $F$, we have
\begin{equation}\label{htn}
 \frac{H'_{t_n}}{H_{t_n}}( T_n + z \log_+ T_n ) = \frac{1 + o_{n \to \infty}(1)}{4}  \log_+ T_n
\end{equation}
uniformly for $z$ in a compact subset of the lower half-plane.

Let $\delta>0$ be a small constant. As in the proof of \eqref{Big_O_Bound}, we can use the argument principle (and a rescaling) to write
$$ N_{t_n}([T_n, T_n + \alpha_n \log_+ T_n])
= \frac{\log_+ T_n}{\pi} \Im \left( \int_{\Gamma_{I,n}} + \int_{\Gamma_{II,n}} + \int_{\Gamma_{III,n}} \right) \frac{H_{t_n}'}{H_{t_n}}(T_n + z \log_+ T_n)\, dz,$$
where $\Gamma_{I,n}$, $\Gamma_{II,n}$, $\Gamma_{III,n}$ trace the line segments from $0$ to $-i\delta$, from $-i\delta$ to $\alpha_n-i\delta$, and from $\alpha_n-i\delta$ to $\alpha_n$ respectively.  By \eqref{htn}, the contribution of the $\Gamma_{II,n}$ integral is $\frac{\alpha + o_{n \to \infty}(1) + O(\delta)}{4\pi} \log_+^2 T_n$ (we allow the decay rate in the $o_{n \to \infty}(1)$ errors to depend on $\delta$).  Using the Jensen formula argument used to prove \eqref{Big_O_Bound}, we see that the contribution of the $\Gamma_{I,n}$ integral is bounded in magnitude by
$$ \ll \int_0^1 \log |g_n( \delta + 2\delta e^{2\pi i \alpha})| - \log |g_n(\delta)|\ d\alpha$$
where
$$ g_n(s)  
\coloneqq \tfrac{1}{2}\left( e^{-i\theta_n} H_{t_n}(T_n + is \log_+ T_n) + e^{i\theta_n} H_{t_n}(T_n - is \log_+ T_n)\right )$$
and the phase $\theta_n$ is chosen so that $e^{i\theta_n} H_{t_n}(T_n - i\delta \log_+ T_n)$ is real and positive.
Applying \eqref{hatn} (and the functional equation $H_{t_n}(\overline{z}) = \overline{H_{t_n}(z)}$) when $|\mathrm{Im}(z)| \geq \sqrt{\delta}$ (say), and \eqref{H_bound} (and the functional equation) otherwise, we conclude that the $\Gamma_{I,n}$ integral is equal to
 $\left(o_{n \to \infty}(1) + O(\sqrt{\delta})\right) \log_+^2 T_n$.  Similarly for the $\Gamma_{III,n}$ integral.  Taking $\delta$ to be sufficiently small and $n$ sufficiently large, we contradict \eqref{natn}.
\end{proof}

\section{Dynamics of zeroes}

As remarked in the introduction, the functions $H_t$ solve a backwards heat equation.  As worked out in \cite{csv}, this induces a corresponding dynamics on the zeroes $x_j$ of $H_t$:

\begin{theorem}[Dynamics of zeroes]\label{dynam} For $\Lambda < t \leq 0$, the zeroes $x_j(t)$ depend in a continuously differentiable fashion on $t$ for each $j$, with the equations of motion
\begin{equation}\label{ode}
 \partial_t x_k(t) = 2 \sum_{j:\, j \neq k}^{\prime} \frac{1}{x_k(t) - x_j(t)} 
\end{equation}
for $k \in \Z^*$ and $\Lambda < t \leq 0$, where recall the tick denotes principal value summation over $j \in \Z^*$ (which will converge thanks to \eqref{xji}, \eqref{xia}).
\end{theorem}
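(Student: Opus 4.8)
The plan is to combine the implicit function theorem with the Hadamard factorization of $H_t$, following \cite{csv}. As preliminaries, observe first that since $\Phi(u)$ decays super-exponentially one may differentiate \eqref{htdef} under the integral sign arbitrarily often in both variables, so $H_t(z)$ is jointly smooth in $(t,z)$ on $(-\infty,0]\times\C$, is real for real $z$ (as $\Phi$ and $\cos$ are real), and satisfies the backward heat equation $\partial_t H_t(z)=-\partial_{zz}H_t(z)$, since $\partial_t(e^{tu^2}\cos(zu))=u^2 e^{tu^2}\cos(zu)=-e^{tu^2}\partial_{zz}\cos(zu)$. Second, from standard growth estimates — either \eqref{htz} together with the order-one growth of $H_0$, or directly from \cite{csv} — each $H_t$ is entire of order at most $1$.

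Now fix a $t$ in the range $\Lambda<t\le0$ and an index $k\in\Z^*$. By \cite[Corollary 1]{csv} the zeroes of $H_t$ are real, simple and nonzero, so $H_t'(x_k(t))\neq0$, and applying the implicit function theorem to the real-valued equation $H_s(x)=0$ near $(t,x_k(t))$ produces a $C^1$ branch of zeroes through $x_k(t)$. Since the zeroes are simple for every parameter value in the range, they move continuously without collisions (Hurwitz / Rouch\'e), so the strict ordering $0<x_1(s)<x_2(s)<\cdots$ is preserved and this branch is exactly $x_k(\cdot)$; hence $x_k$ is $C^1$ on $(\Lambda,0]$. Differentiating $H_s(x_k(s))=0$ in $s$ and invoking the heat equation gives
$$\partial_t x_k(t)=-\frac{\partial_t H_t(x_k(t))}{H_t'(x_k(t))}=\frac{H_t''(x_k(t))}{H_t'(x_k(t))}.$$

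To evaluate the right-hand side I would use the Hadamard product. As $H_t$ is even, entire of order $\leq1$, nonzero at $0$, and has zero set $\{\pm x_j(t):j\geq1\}$ with $\sum_j x_j(t)^{-2}<\infty$ by \eqref{xji}, \eqref{xia}, the factorization collapses to the genus-one product $H_t(z)=H_t(0)\prod_{j\geq1}\big(1-z^2/x_j(t)^2\big)$ (the absence of a linear exponential factor follows from evenness), so that $\tfrac{H_t'}{H_t}(z)=\sum_{j\geq1}\tfrac{2z}{z^2-x_j(t)^2}$. Writing $H_t(z)=(z-x_k(t))h(z)$ with $h(x_k(t))=H_t'(x_k(t))\neq0$ gives the elementary identity $\tfrac{H_t''(x_k(t))}{H_t'(x_k(t))}=2\tfrac{h'(x_k(t))}{h(x_k(t))}=2\lim_{z\to x_k(t)}\big(\tfrac{H_t'}{H_t}(z)-\tfrac{1}{z-x_k(t)}\big)$. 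Substituting the product expansion, splitting off the $j=\pm k$ contributions via $\tfrac{2z}{z^2-x_k^2}=\tfrac{1}{z-x_k}+\tfrac{1}{z+x_k}$ and $\tfrac{2x_k}{x_k^2-x_j^2}=\tfrac{1}{x_k-x_j}+\tfrac{1}{x_k+x_j}$, and using $x_{-j}=-x_j$, the result is precisely $2\sum_{j\neq k}^{\prime}\tfrac{1}{x_k(t)-x_j(t)}$, where the prime denotes the symmetric grouping of $j$ and $-j$; that grouped series converges absolutely because its $j$-th grouped term is $O(x_j(t)^{-2})$, summable by \eqref{xji}, \eqref{xia}.

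I expect the only genuine content to lie in the Hadamard-product bookkeeping: confirming that $H_t$ has order at most $1$ (so the genus is at most $1$), that evenness removes the linear exponential factor, and that the resulting conditionally convergent logarithmic-derivative sum matches the stated principal-value expression at a zero. The remaining ingredients — joint smoothness in $(t,z)$, reality on $\R$, the implicit function theorem, the backward heat equation, and the local identity $\tfrac{H''}{H'}(x_k)=2\big(\tfrac{H'}{H}(z)-\tfrac{1}{z-x_k}\big)\big|_{z=x_k}$ at a simple zero — are routine, the one mild subtlety being that the local implicit-function branches patch into the globally indexed zeroes $x_k(t)$, which is guaranteed by continuity of the zeroes together with their simplicity (ruling out crossings) throughout the range $\Lambda<t\le0$.
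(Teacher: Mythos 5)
Your proof is correct, and it is worth noting that the paper itself gives no argument here beyond citing \cite[Lemmas 2.1, 2.4]{csv}: what you have written is essentially a reconstruction of the content of those lemmas, by the standard route of (a) joint smoothness of $(t,z) \mapsto H_t(z)$ and the backward heat equation $\partial_t H_t = -\partial_{zz} H_t$ from differentiating \eqref{htdef} under the integral sign, (b) the implicit function theorem at a zero that is real, simple and nonzero (all supplied by \cite[Corollary 1]{csv} in the range $\Lambda < t \leq 0$), and (c) the evaluation $\partial_t x_k = H_t''(x_k)/H_t'(x_k) = 2\lim_{z \to x_k}\bigl(\tfrac{H_t'}{H_t}(z) - \tfrac{1}{z - x_k}\bigr)$ via the even, genus-one Hadamard product $H_t(z) = H_t(0)\prod_{j \geq 1}(1 - z^2/x_j(t)^2)$, whose grouped logarithmic derivative matches the principal-value sum in \eqref{ode} because $x_{-j} = -x_j$. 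So in substance the two approaches coincide; yours simply internalizes the citation. Two points you rely on implicitly are fine but deserve a word: there is no circularity in invoking \eqref{xji}, \eqref{xia} (for $\sum_j x_j(t)^{-2} < \infty$ and for the convergence of the grouped series), since Corollary \ref{macro} is established in Section \ref{vm} without any use of the dynamics, and indeed the paper itself appeals to those bounds for the convergence of the principal-value sum; and in the patching step the clean statement is that the number of zeroes in $(0,\gamma(t))$ is locally constant in $t$ (all zeroes being real, simple, and bounded away from the origin throughout the range, with Hurwitz's theorem precluding zeroes materializing inside a compact interval), which is exactly what your Hurwitz/Rouch\'e remark is standing in for. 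Finally, the range in the theorem statement should of course be read as $\Lambda < t \leq 0$ (the printed $-\Lambda$ is a typo), which is the range you work in.
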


\begin{proof}  This follows from \cite[Lemma 2.4]{csv} (the continuity of the derivative following for instance from \cite[Lemma 2.1]{csv}).  \end{proof}

Informally, the ODE \eqref{ode} indicates that the zeroes $x_k(t)$ will repel each other as one goes forward in time.  On the other hand, if the $x_k(t)$ are arranged (locally, at least) in an arithmetic progression, then the ODE \eqref{ode} suggests that the zeroes will be in equilibrium.  If the $x_k$ are not arranged in an arithmetic progression, and instead have some fluctuation in the spacing between zeroes, then heuristically the ODE \eqref{ode} suggests that the zeroes would move away from the more densely spaced regions and towards more sparsely spaced regions, thus converging towards the equilibrium of an arithmetic progression.  This is the intuition behind the convergence to local equilibrium mentioned in the introduction.

One can estimate the speed of this local convergence to equilibrium by the following heuristic calculation.  Consider the zeroes in a region $[T,T+\alpha]$ of space, where $T > 0$ is large and $\alpha$ is reasonably small (e.g. $\alpha = O(\log_+ T)$).  From Theorem \ref{rmt} (or \eqref{xji}, \eqref{xia}), we see that we expect about $\frac{\alpha}{4\pi} \log T$ zeroes in this interval, with an average spacing of $\frac{4\pi}{\log_+ T}$.  Suppose for sake of informal discussion that there is some moderate fluctuation in this spacing, for instance suppose that the left half of the interval contains about $1.5 \frac{\alpha}{8 \pi} \log T$ zeroes and the right half contains only about $0.5 \frac{\alpha}{8\pi} \log_+ T$ zeroes.  Then a back of the envelope calculation suggests that for $x_k(t)$ near the middle of this interval, the right-hand side of \eqref{ode} would be positive and have magnitude $\asymp \frac{\alpha \log_+ T}{\alpha} = \log_+ T$.  Since the length of the interval is $\alpha$, one may then predict that the time needed to relax to equilibrium is about $\alpha / \log_+ T$.  Since we can flow for time $|\Lambda| \asymp 1$, one would expect to attain equilibrium at the final time $t=0$ if the initial length scale $\alpha$ of the fluctuation obeys the bound $\alpha = o_{T \to \infty}(\log_+ T)$.  Happily, this upper bound is precisely what the asymptotic \eqref{add} gives, so we heuristically expect to (barely) be able to establish local equilibrium at time $t=0$.

Of course, one has to make this intuition more precise.  Our strategy for doing so involves exploiting\footnote{This strategy was loosely inspired by the work of Erd\H{o}s, Schlein, and Yau \cite{esy} exploiting the Hamiltonian structure of Dyson Brownian motion to obtain local convergence to equilibrium, since the equations for Dyson Brownian motion resemble that in \eqref{ode} (but with an additional Brownian motion term).  Indeed, Dyson Brownian motion is the diffusion related to the Gibbs measure $\frac{1}{\mathcal Z} e^{-\beta \mathrm{H}}$ for the Hamiltonian studied here.} the formal gradient flow structure of the ODE \eqref{ode}.  Indeed, one may formally write \eqref{ode} as the gradient flow
$$ \partial_t x_k(t) = - \partial_{x_k} \mathrm{H}( (x_j(t))_{j \in \Z^*} ),$$
where $\mathrm{H}$ is the formal ``Hamiltonian''
$$ \mathrm{H}( (x_j)_{j \in \Z^*} ) \coloneqq \sum_{j,k \in \Z^*:\, j \neq k} \log \frac{1}{|x_k - x_j|}$$
where we ignore for this non-rigorous discussion the fact that the series defining $\mathcal{H}$ is not absolutely convergent.  The Hamiltonian is convex, so one expects the quantity
$${\mathcal H}(t) \coloneqq {\rm H}( (x_j(t))_{j \in \Z^*} ) = \sum_{j,k \in \Z^*:\, j \neq k} H_{jk}(t)$$ 
to be decreasing and convex in time, and for the state $(x_j(t))_{j \in \Z^*}$ to converge to a critical point of the Hamiltonian, where
\begin{equation}\label{formal_e}
H_{jk}(t) \coloneqq \log \frac{1}{|x_j(t) - x_k(t)|}
\end{equation}
denotes the Hamiltonian interaction between $x_j(t)$ and $x_k(t)$.  Indeed, a formal calculation using \eqref{ode} yields the identity
$$ \partial_t {\mathcal H}(t) = - 4 E(t)$$
where $E$ is the ``energy''
$$ E(t) \coloneqq \sum_{k,k' \in \Z^*:\, k \neq k'} E_{kk'}(t)$$
and
\begin{equation}\label{ekj-def}
 E_{kk'}(t) \coloneqq \frac{1}{|x_k(t) - x_{k'}(t)|^2}
\end{equation}
denotes the ``interaction energy'' betwen $x_k(t)$ and $x_{k'}(t)$, and we once again ignore the issue that the series is not absolutely convergent.  A further formal calculation using \eqref{ode} again eventually yields
$$ \partial_t E(t) = - 2 \sum_{k,k'\in \Z^*:\, k \neq k'} \left( \frac{2}{|x_k(t) - x_{k'}(t)|^2} - \sum_{k'' \in \Z^*:\, k''\neq k,k'} \frac{1}{(x_{k''}(t) - x_k(t)) (x_{k''}(t)-x_{k'}(t))} \right)^2 $$
suggesting that ${\mathcal H}(t)$ and $E(t)$ are decreasing and that ${\mathcal H}(t)$ is convex, as claimed.

In order to deal with the divergence of the infinite series appearing above, we will need to truncate the Hamiltonian and energy before differentiating them.  The following lemma records some of the identities that arise when doing such truncations:

\begin{lemma}[Identities]\label{ident}  For brevity, we suppress explicit dependence on the time parameter $t \in (\Lambda,0]$.  Let $K \subset \Z^*$ be a finite set of some cardinality $|K|$.  All summation indices such as $i,j,k$ are assumed to lie in $\Z^*$.
\begin{itemize}
\item[(i)] (Dynamics of a gap, cf. \cite[Lemma 2.4]{csv}) If $j,k \in \Z^*$ are distinct, then
$$ \partial_t (x_k-x_j) = \frac{4}{x_k-x_j} - 2 (x_k - x_j) \sum_{i:\, i \neq k,j} \frac{1}{(x_i-x_k)(x_i-x_j)}.$$
\item[(ii)] (Cross-energy inequality, cf. \cite[Lemma 2.5]{csv}) One has
$$ \partial_t \sum_{k \in K; j \not \in K} E_{jk} \geq - \sum_{k \in K; j \not \in K} \frac{8}{(x_k-x_j)^4}$$
in the weak sense that
$$ \sum_{k \in K; j \not \in K} E_{jk}(t_2) - E_{jk}(t_1) \geq - \int_{t_1}^{t_2} \sum_{k \in K; j \not \in K} \frac{8}{(x_k-x_j)^4}(t)\, dt$$
whenever $\Lambda < t_1 < t_2 \leq 0$.
\item[(iii)] (Energy identity) One has
\begin{align*}
 \partial_t \sum_{k,k' \in K:\, k \neq k'} E_{kk'} &= \sum_{\substack{j \not \in K\\ k,k' \in K:\, k \neq k'}} \frac{4}{(x_k-x_{k'})^2 (x_k-x_j)(x_{k'}-x_j)} \\
&\quad - 2 \sum_{k,k' \in K:\, k \neq k'} \left( \frac{2}{(x_k-x_{k'})^2} - \sum_{k'' \in K:\, k'' \neq k,k'} \frac{1}{(x_{k''}-x_k)(x_{k''}-x_{k'})} \right)^2.
\end{align*}
\item[(iv)]  (Virial\footnote{The terminology here is in analogy with the virial identity in $N$-body classical gravitational physics; see e.g., \cite[Exercise 1.48]{tao-book}.} identity) One has
$$
 \partial_t \sum_{k,k' \in K:\, k \neq k'} (x_k-x_{k'})^2 = 4|K|^2 (|K|-1) - \sum_{k,k' \in K:\, k \neq k'} (x_k-x_{k'})^2 \sum_{j \not \in K} \frac{4}{(x_k-x_j)(x_{k'}-x_j)}
$$
\item[(v)]  (Hamiltonian identity) One has
$$
\partial_t \sum_{k,k' \in K:\, k \neq k'} H_{kk'} = -4 \sum_{k,k' \in K:\, k \neq k'} E_{kk'} + 2 \sum_{\substack{j \not \in K \\ k,k' \in K:\, k \neq k'}} \frac{1}{(x_j-x_k)(x_j-x_{k'})}.$$
\end{itemize}
\end{lemma}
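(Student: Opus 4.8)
The plan is to derive all five identities by differentiating the appropriate \emph{finite} sums in $t$, using the equations of motion \eqref{ode} from Theorem \ref{dynam} together with the product and chain rules, and then simplifying the resulting expressions by elementary partial-fraction manipulations; parts (i) and (ii) are essentially the identities of \cite[Lemmas 2.4, 2.5]{csv}, so the new content is mostly (a) the truncation structure adapted to a general finite set $K$ and (b) the analytic justification of the rearrangements. All the analytic input needed for the latter comes from Corollary \ref{macro}: the bounds $x_j\asymp j/\log_+ j$ and the short-scale spacing estimate \eqref{add} ensure that the ``tail'' series appearing below converge absolutely, with bounds locally uniform in $t\in(\Lambda,0]$, so that dominated convergence and Fubini apply.

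First I would prove (i), which drives everything else. Subtracting the equations of motion \eqref{ode} for $x_k$ and $x_j$, extracting the $i=j$ term from the first principal value sum and the $i=k$ term from the second (which together contribute $\tfrac{4}{x_k-x_j}$), and applying the partial fraction identity
$$ \frac{1}{x_k-x_i}-\frac{1}{x_j-x_i}=\frac{-(x_k-x_j)}{(x_i-x_k)(x_i-x_j)} $$
to the remaining terms gives the claimed formula. The key point legitimizing the manipulation is that $\sum_{i\neq k,j}\frac{1}{(x_i-x_k)(x_i-x_j)}$ is \emph{absolutely} convergent: only finitely many $i$ have $x_i$ within bounded distance of $x_k$ or $x_j$ (the zeroes are discrete), and for the remaining $i$ the summand decays like $x_i^{-2}$ by Corollary \ref{macro}; hence the difference of the two conditionally convergent principal value sums may be freely rearranged.

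Given (i), identities (iii), (iv), (v) are finite-sum computations. I would differentiate each of $E_{kk'}=(x_k-x_{k'})^{-2}$, $(x_k-x_{k'})^2$, and $H_{kk'}=-\log|x_k-x_{k'}|$ for $k,k'\in K$ using (i), sum over the (finite) set $K$, and split the inner $i$-sum into an external part $i\notin K$ and an internal part $i\in K$. The external part survives verbatim as the displayed cross terms. The internal part is collapsed using classical partial-fraction identities over $3$-element subsets $\{a,b,c\}\subset K$: for (v) one uses $\sum_{\mathrm{cyc}}\frac{1}{(a-b)(a-c)}=0$, so the internal contribution vanishes; for (iv) one uses $\sum_{\mathrm{cyc}}\frac{(b-c)^2}{(a-b)(a-c)}=-3$, which combines with the $\tfrac{8}{x_k-x_{k'}}\cdot(x_k-x_{k'})$ terms to assemble the combinatorial constant $4|K|^2(|K|-1)$; and for (iii) the internal part, after being combined with the $-8(x_k-x_{k'})^{-4}$ terms, is reorganized into $-2\sum_{k\neq k'}\big(\tfrac{2}{(x_k-x_{k'})^2}-S_{kk'}\big)^2$ with $S_{kk'}=\sum_{k''\in K,\,k''\neq k,k'}\frac{1}{(x_{k''}-x_k)(x_{k''}-x_{k'})}$, the requisite algebraic fact being $\sum_{k\neq k'\in K}S_{kk'}\big(S_{kk'}-\tfrac{2}{(x_k-x_{k'})^2}\big)=0$, which itself decomposes into a part over $3$-subsets and a part over $4$-subsets (vanishing of the sum of $\tfrac{1}{(c-a)(c-b)(d-a)(d-b)}$ over the three pairings), each of which is an elementary identity.

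Finally, (ii) is the one statement whose outer sum is genuinely infinite (over $j\notin K$), so term-by-term differentiation of $\sum_{k\in K,\,j\notin K}E_{jk}$ is delicate — this is precisely why it is phrased in the integrated weak form. Here I would integrate the pointwise identity from (i), namely $\partial_t E_{jk}=-\tfrac{8}{(x_k-x_j)^4}+\tfrac{4}{(x_k-x_j)^2}\sum_{i\neq k,j}\frac{1}{(x_i-x_k)(x_i-x_j)}$, over $t\in[t_1,t_2]$, then sum over $k\in K$ and $j\notin K$, justifying the interchange of $\sum_j$ with $\int$ by dominated convergence using Corollary \ref{macro}. It then remains to see that the triple sum $\sum_{k\in K,\,j\notin K}\frac{4}{(x_k-x_j)^2}\sum_{i\neq k,j}\frac{1}{(x_i-x_k)(x_i-x_j)}$ is non-negative: splitting the $i$-sum and pairing $i\leftrightarrow k$ when both lie in $K$, and $i\leftrightarrow j$ when both lie outside $K$, each pair contributes a term of the form $\frac{4}{(\cdot)^2(\cdot)^2}\geq 0$, which yields the stated inequality. \textbf{The main obstacle} is not any individual computation but the bookkeeping: controlling the conditionally convergent principal value sums and the infinite tails throughout, and checking that every rearrangement and interchange of limits is licensed by the decay estimates of Corollary \ref{macro} — this is exactly the role played by the (otherwise modest-looking) macroscopic structure results of the previous section.
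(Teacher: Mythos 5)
Parts (i), (iii), (iv) and (v) of your proposal are correct and essentially identical to the paper's own argument: you differentiate the finite sums using (i), split the inner $i$-sum into its internal ($i\in K$) and external ($i\notin K$) parts, keep the external part as the displayed cross terms, and collapse the internal part by the elementary symmetrization identities over three- and four-element subsets; your cyclic identities (the vanishing cyclic sum for (v), the $-3$ identity for (iv), and the three-pairing identity for the four-index part of (iii)) are exactly the paper's antisymmetrization/Vandermonde computations in different notation, and your derivation of (i) from \eqref{ode} is the same partial-fraction manipulation, legitimately justified by the absolute convergence coming from \eqref{xji} and \eqref{xia}.

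The genuine gap is in (ii). You propose to sum the pointwise identity over \emph{all} $j\notin K$ and to justify the interchange of $\sum_{j\notin K}$ with $\int_{t_1}^{t_2}$, and the subsequent pairing rearrangement, ``by dominated convergence using Corollary \ref{macro}''. But Corollary \ref{macro} only gives upper bounds on $|x_j(t)-\xi_j|$; it gives no lower bound on gaps between consecutive zeroes (in \eqref{add} the error term $o_{j\to\infty}(\log_+\xi_j)$ swamps the mean spacing $4\pi/\log_+\xi_j$), so at this stage of the paper nothing prevents $|x_{j+1}(t)-x_j(t)|$ from shrinking arbitrarily fast as $j\to\infty$; the quantitative gap bound (Proposition \ref{gap}) comes later and is proved \emph{using} this lemma, so it cannot be invoked here. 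Consequently neither a dominating function for the $\sum_j$--$\int$ interchange nor the absolute convergence of the triple sum over $k\in K$, $j\notin K$, $i\neq j,k$ is available, and your pairing argument --- which matches the $(k,j,i)$ term with the $(k,i,j)$ term, i.e.\ reorders terms across different values of $j$ --- is precisely the kind of rearrangement of a not-known-to-be-absolutely-convergent sum that requires justification (indeed the unpaired sum $\sum_{j\notin K}B_{jk}$ need not even be well defined pointwise in $t$). The paper avoids all of this by first truncating $j$ to $[-R,R]_{\Z^*}\setminus K$: for fixed $R$ the only infinite sum is the absolutely convergent $i$-sum, the symmetrization (your pairing) is carried out within the truncated index ranges, the leftover terms with $i\notin[-R,R]_{\Z^*}$ are individually non-negative (since such $x_i$ lies outside the interval spanned by $x_j$ and $x_k$), and the full inequality is then recovered in its integrated weak form by letting $R\to\infty$ via monotone convergence. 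With this truncation-and-limit device inserted, your argument for (ii) goes through; as written, its analytic justification fails.
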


A key point in the identities (iii), (iv), (v) is that if one ignores the ``cross terms'' involving interactions between indices in $K$ (representing some ``local subsystem'' of particles) and indices outside of $K$ (representing the ``environment'' that that subsystem interacts with), the right-hand side has a definite sign (negative in the case of (iii) and (v), and positive in the case of (iv)).  This gives a number of useful ``monotonicity formulae'' as long as cross terms are under control.  As discussed above, many of these various monotonicity formulae reflect the formal convexity properties of the Hamiltonian ${\mathcal H}$.  With more effort one can obtain a precise formula for the defect in the inequality in (ii); see \cite[Lemma 2.5]{csv}.

\begin{proof}  From \eqref{ode} one has
$$ \partial_t x_k - \partial_t x_j = \frac{2}{x_k-x_j} - \frac{2}{x_j-x_k} + \sum_{i:\, i\neq k,j} \frac{2}{x_k-x_i} - \frac{2}{x_k-x_j}$$
which gives (i). Note that the series is now absolutely convergent thanks to \eqref{xji}, \eqref{xia}.

Now we prove (ii).  By monotone convergence, it suffices to show that
$$ \sum_{\substack{k \in K\\ j \in [-R,R]_{\Z^*} \backslash K}} E_{jk}(t_2) - E_{jk}(t_1) \geq - \int_{t_1}^{t_2} \sum_{\substack{k \in K \\ j \in [-R,R]_{\Z^*} \backslash K}} \frac{8}{(x_k-x_j)^4}(t)\, dt$$
for all $\Lambda < t_1 \leq t_2 \leq 0$ and all sufficiently large $R$.  By the fundamental theorem of calculus, it suffices to show that
$$ \partial_t \sum_{\substack{k \in K\\ j \in [-R,R]_{\Z^*} \backslash K}} E_{jk} \geq - \sum_{\substack{k \in K \\ j \in [-R,R]_{\Z^*} \backslash K}} \frac{8}{(x_k-x_j)^4}.$$
we can expand the left-hand side as
$$ - 2 \sum_{\substack{k \in K \\ j \in [-R,R]_{\Z^*} \backslash K}} \frac{\partial_t (x_k - x_j)}{(x_k-x_j)^3}$$
which by (i) becomes
$$ - \sum_{\substack{k \in K\\ j \in [-R,R]_{\Z^*} \backslash K}} \frac{8}{(x_k-x_j)^4} + 4 \sum_{\substack{k \in K \\ j \in [-R,R]_{\Z^*} \backslash K \\ i:\, i \neq j,k}} \frac{1}{(x_k-x_j)^2 (x_i-x_k)(x_i-x_j)}$$
and so it will suffice to show that
$$ \sum_{\substack{k \in K \\ j \in [-R,R]_{\Z^*} \backslash K \\ i:\, i \neq j,k}} \frac{1}{(x_k-x_j)^2 (x_i-x_k)(x_i-x_j)} \geq 0.$$
If $R$ is large enough that $[-R,R]_{\Z^*}$ contains $k$, we can split this sum into three parts, depneding on whether $i \in K$, $i \in [-R,R]_{\Z^*} \backslash K$, or $i \not \in [-R,R]_{\Z^*}$. The contribution of the case $i \in K$ can be rewritten as
$$ \sum_{\substack{j \not \in K \\ k,k' \in K:\, k \neq k'}} \frac{4 (x_{k'}-x_j)}{(x_k-x_j)^2 (x_{k'}-x_j)^2 (x_k-x_{k'})}$$
which equals 
$$\sum_{\substack{j \in [-R,R]_{\Z^*} \\ k,k' \in K:\, k \neq k'}} \frac{2}{(x_k-x_j)^2 (x_{k'}-x_j)^2}$$
after symmetrising in $k$ and $k'$, which is clearly non-negative.  Similarly the contribution of the case $i \in [-R,R]_{\Z^*} \backslash K$ is 
$$\sum_{\substack{k \in K \\ j,j' \in [-R,R]_{\Z^*} \backslash K:\, j \neq j'}} \frac{2}{(x_k - x_j)^2 (x_k-x_{j'})^2},$$ 
which is also clearly non-negative.  Finally, for $i \not \in [-R,R]_{\Z^*}$, all summands are already non-negative.  This gives (ii).

For (iii), we can similarly expand the left-hand side as
$$ - 2 \sum_{k,k' \in K:\, k \neq k'} \frac{\partial_t(x_k-x_{k'})}{(x_k-x_{k'})^3}$$
which by (i) becomes
$$ - \sum_{k,k' \in K:\, k \neq k'} \frac{8}{(x_k-x_{k'})^4} + 4 \sum_{\substack{k,k' \in K: k \neq k'\\ i:\, i \neq k,k'}} \frac{1}{(x_k-x_{k'})^2 (x_i-x_k)(x_i-x_{k'})}.$$
To prove (iii), it thus suffices to establish the identity
\begin{align*}
&\sum_{k,k' \in K:\, k \neq k'} \left( \frac{2}{(x_k-x_{k'})^2} - \sum_{k'' \in K:\, k'' \neq k,k'} \frac{1}{(x_{k''}-x_k)(x_{k''}-x_{k'})} \right)^2\\
&\quad = \sum_{k,k' \in K:\, k \neq k'} \frac{4}{(x_k-x_{k'})^4} - 2 \sum_{k,k',k'' \in K:\, k, k', k''\ \mathrm{distinct}} \frac{1}{(x_k-x_{k'})^2 (x_{k''}-x_k)(x_{k''}-x_{k'})}.
\end{align*}
The left-hand side expands as 
\begin{align*} &\sum_{k,k' \in K:\, k \neq k'} \frac{4}{(x_k-x_{k'})^4} - \sum_{k,k' \in K:\, k \neq k'} \frac{4}{(x_k-x_{k'})^2} \sum_{k'' \in K:\, k'' \neq k,k'} \frac{1}{(x_{k''}-x_k)(x_{k''}-x_{k'})} \\
&\quad + \sum_{k,k' \in K:\, k \neq k'} \; \sum_{k'' \in K:\, k'' \neq k,k'} \frac{1}{(x_{k''}-x_k)^2 (x_{k''}-x_{k'})^2} \\
&\quad +  \sum_{k,k' \in K:\, k \neq k'} \; \sum_{k'',k''' \in K:\, k'' \neq k,k'} \frac{1}{(x_{k''}-x_k) (x_{k''}-x_{k'}) (x_{k'''}-x_k) (x_{k'''}-x_{k'})}. 
\end{align*}
The final sum can be rewritten as
$$ \sum_{k,k',k'',k''' \in K:\, k,k',k'',k'''\ \mathrm{distinct}} \frac{(x_k - x_{k'})(x_{k''}-x_{k'''})}{(x_{k''}-x_k) (x_{k''}-x_{k'}) (x_{k'''}-x_k) (x_{k'''}-x_{k'})(x_k-x_{k'})(x_{k''}-x_{k'''})}.$$
The denominator is a Vandermonde determinant and is totally antisymmetric in $k,k',k'',k'''$.  All the monomials appearing in the numerator disappear upon antisymmetrization, so the final sum vanishes.  To conclude the proof of (iii), it suffices to show that
\begin{multline*}
\sum_{k,k' \in K:\, k \neq k'} \; \sum_{k'' \in K:\, k'' \neq k,k'} \frac{1}{(x_{k''}-x_k)^2 (x_{k''}-x_{k'})^2} \\
=  \sum_{k,k' \in K: k \neq k'} \frac{2}{(x_k-x_{k'})^2} \sum_{k'' \in K:\, k'' \neq k,k'} \frac{1}{(x_{k''}-x_k)(x_{k''}-x_{k'})}.
\end{multline*}
The difference between the LHS and RHS can be written as
$$ \sum_{k,k',k'' \in K:\, k,k',k''\ \mathrm{distinct}} \frac{(x_k-x_{k'})^2 - 2 (x_{k''}-x_k) (x_{k''}-x_k)}{(x_{k''}-x_k)^2 (x_{k''}-x_{k'})^2 (x_k-x_{k'})^2}.$$
The denominator is totally symmetric in $k,k',k''$, while the numerator symmetrizes to zero, giving the claim.

Now we prove (iv).  The left-hand side expands as
$$ 2 \sum_{k,k' \in K:\, k \neq k'} (x_k-x_{k'}) \partial_t (x_k-x_{k'})$$
which by (i) becomes
$$ 8 |K| (|K|-1) - 4 \sum_{k,k' \in K:\, k \neq k'} (x_k-x_{k'})^2 \sum_{i \neq k,k'} \frac{1}{(x_i-x_k)(x_i-x_{k'})}.$$
It will thus suffice to show that
$$ \sum_{k,k',k'' \in K:\, k, k', k''\ \mathrm{distinct}} \frac{(x_k-x_{k'})^2}{(x_{k''}-x_k)(x_{k''}-x_{k'})} = -|K| (|K|-1) (|K|-2).$$
But the left-hand side can be written as
$$ \sum_{k,k',k'' \in K:\, k, k', k''\ \mathrm{distinct}} \frac{(x_k-x_{k'})^3}{(x_{k''}-x_k)(x_{k''}-x_{k'})(x_k - x_{k'})} = -|K| (|K|-1) (|K|-2).$$
The denominator is totally antisymmetric in $k,k',k''$.  The numerator antisymmetrizes to $-(x_{k''}-x_k)(x_{k''}-x_{k'})(x_k - x_{k'})$, giving the claim.

Finally we prove (v).  The left-hand side expands as
$$
-\sum_{k,k' \in K:\, k \neq k'} \frac{\partial_t (x_k-x_{k'})}{x_k-x_{k'}}$$
which by (i) becomes
$$
-\sum_{k,k' \in K:\, k \neq k'} \frac{4}{(x_k-x_{k'})^2} + 2 \sum_{k,k' \in K:\, k \neq k'} \sum_{i:\, i \neq k,k'} \frac{1}{(x_i-x_k)(x_i-x_{k'})}.$$
It thus suffices to show that the expression
$$ \sum_{k,k',k'' \in K: k,k',k''\ \mathrm{distinct}} \frac{1}{(x_{k''}-x_k)(x_{k''}-x_{k'})}$$
vanishes. But the summand antisymmetrizes to zero, giving the claim.
\end{proof}

\section{A weak bound on gaps}\label{weakgap}

In order to analyze (truncated versions) of the Hamiltonian ${\mathcal H}(t) = \sum_{j \neq k} H_{jk}(t)$, we will need some upper bounds on the individual terms $H_{jk}(t)$.   It was shown in \cite[Corollary 1]{csv} that these quantities are finite (i.e., the zeroes are simple) when $\Lambda < t \leq 0$.  It turns out that by refining the analysis in \cite{csv} (and by narrowing the range of times $t$ to the region $\Lambda/2 \leq t \leq 0$), one can establish a more quantitative lower bound:

\begin{proposition}[Lower bound on gaps]\label{gap}  For any $j \in \Z^*$ and any $\Lambda/2 \leq t \leq 0$, one has
\begin{equation}\label{mox}
 \max_{k \in \Z^*:\, k \neq j} H_{jk}(t) \ll (\log^2_+ j) \log_+ \log_+ j
\end{equation}
\end{proposition}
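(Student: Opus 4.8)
The goal is to show that no gap $|x_{j+1}(t)-x_j(t)|$ can be too small, with the bound $H_{jk}(t) = \log\frac{1}{|x_j(t)-x_k(t)|} \ll (\log_+^2 j)\log_+\log_+ j$ for the worst $k$. Note that since the $x_k$ are symmetric and increasing, the maximum over $k$ of $H_{jk}$ is essentially controlled by the smallest gap near the index $j$, i.e. $\min(|x_{j+1}-x_j|,|x_j-x_{j-1}|)$ (all other distances are bounded below by $\asymp \frac{|k-j|}{\log_+ j}$ for $k$ of comparable size, and bounded below by constants for far $k$, using Corollary \ref{macro}). So it suffices to lower-bound the minimal adjacent gap at scale $j$. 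The plan is to run a Grönwall/backwards-in-time argument on the gap dynamics, following the repulsion philosophy of \cite[Theorem 1]{csv} but quantified.

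\textbf{Step 1: Localize to a bounded window of indices.} Fix $j$ and a time $t\in[\Lambda/2,0]$. Work with the window $K = [j-m, j+m]_{\Z^*}$ (and its mirror image) for a suitable $m = m(j)$ — I expect $m$ to be a small power of $\log_+ j$ or so, chosen at the end to optimize. Using the macroscopic bounds \eqref{xji}, \eqref{add} from Corollary \ref{macro}, together with the bound \eqref{xidif} for the classical locations, one knows that for indices $i\notin K$ the distance $|x_i - x_k|$ for $k\in K$ is $\gg \frac{|i-k|}{\log_+ j}$ (for comparable $i$) and grows linearly for far-away $i$. This is what will make all the ``cross terms'' in Lemma \ref{ident} controllable.

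\textbf{Step 2: Control the cross-terms.} The key identity is Lemma \ref{ident}(i): for adjacent zeroes $x_k, x_{k+1}$ with $k$ near $j$,
\begin{equation*}
 \partial_t(x_{k+1}-x_k) = \frac{4}{x_{k+1}-x_k} - 2(x_{k+1}-x_k)\sum_{i\neq k,k+1}\frac{1}{(x_i-x_{k+1})(x_i-x_k)}.
\end{equation*}
The first term is the repulsion: it is large and positive precisely when the gap is small, and pushes the gap open as time increases (hence, going backwards in time from $t=0$ to $t=\Lambda/2$, the gap cannot have been too much smaller). The sum is the ``external force''. Split it into $i\in K$ and $i\notin K$. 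For $i\notin K$, the factors are controlled by Step 1, giving a contribution that is $O((x_{k+1}-x_k)\cdot \mathrm{(bounded\ or\ }O(\log_+^2 j)\mathrm{)})$, using $\sum_{i\notin K}\frac{1}{(x_i-x_k)(x_i-x_{k+1})} \ll (\log_+ j)^2 / m + \dots$ from the classical spacing. For $i\in K$ the terms can be of either sign, but they telescope partially; the strategy (as in \cite{csv}) is to consider not a single gap but the full system of gaps in $K$, or equivalently a convexity/maximum-principle argument. A cleaner route: introduce the quantity $G(t) \coloneqq \min_{k: |k-j|\le m}(x_{k+1}(t)-x_k(t))$ or a smooth surrogate such as a negative power sum $\sum_{|k-j|\le m}(x_{k+1}-x_k)^{-1}$, and differentiate. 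The within-$K$ interactions should, upon summing, contribute with a favorable sign (this is the discrete analogue of the convexity of the Coulomb Hamiltonian, cf. the sign of the main term in Lemma \ref{ident}(iii),(v)), leaving only the external $O(\log_+^2 j)$ error.

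\textbf{Step 3: Grönwall backwards in time.} Let $g(t)$ denote the smallest adjacent gap at scale $j$ at time $t$ (or the surrogate). From Steps 1–2 we get a differential inequality of the schematic form $\partial_t g \ge \frac{c}{g} - C(\log_+^2 j)\, g$ on $[\Lambda/2, 0]$, where the error depends on the window choice; alternatively, for the reciprocal/logarithmic surrogate $S(t) = \log\frac1{g(t)}$ one gets $\partial_t S \le -c\,e^{2S}\cdot(\text{something}) + C\log_+^2 j$, i.e. $S$ cannot increase too fast going forward, hence going backward from $t=0$ it cannot have been much larger than at $t=0$ plus $|\Lambda| C\log_+^2 j$. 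The point is that at $t=0$ we have the genuine Riemann–von Mangoldt / Littlewood bound: by \eqref{nor-3} (Littlewood), $N_0([T,T+\alpha])$ is essentially $\frac{\alpha\log T}{4\pi}$ down to scales $\alpha\asymp 1$, which does NOT by itself forbid a tiny gap — so one actually needs a lower bound on the gap at $t=0$ too. Here is where the extra $\log_+\log_+ j$ enters: the crudest unconditional lower bound for the gap of zeta zeros (or, via \eqref{hoz}, of $H_0$) at height $\asymp \xi_j$ is of the shape $\gg \exp(-C\log^2 j)$ or so — more than enough — but to be safe one combines this with the repulsion gain. In fact the robust way: even with NO information at $t=0$ beyond simplicity, iterate the repulsion inequality: starting the ODE at $t=0$ with $g(0) = \delta$ and flowing forward a tiny time $\tau$, the repulsion term forces $g(\tau)^2 \gtrsim \delta^2 + c\tau$, so $g$ opens up like $\sqrt{\tau}$; running this argument backward in time across $[\Lambda/2,0]$ with the external error integrated against the known macroscopic scale shows $g(0)$ itself cannot be smaller than roughly $\exp(-C\log_+^2 j \log_+\log_+ j)$, otherwise the gap would have to have been implausibly negative or the zeros would collide at some earlier time in $(\Lambda,0]$, contradicting \cite[Corollary 1]{csv}.

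\textbf{Main obstacle.} The genuinely delicate point is \emph{Step 2}: rigorously showing that the within-window Coulomb interactions contribute with the right sign (or at least a controllably small wrong-sign part) after one truncates the infinite system to $K$. The naive termwise bound on $\sum_{i\in K}\frac{1}{(x_i-x_k)(x_i-x_{k+1})}$ is hopeless because individual terms blow up exactly where the gap is small; one must exploit cancellation/telescoping or a maximum principle for the vector of gaps, which is precisely the technical heart of \cite{csv} that we are refining. Choosing the window size $m(j)$ and the auxiliary functional $S(t)$ so that the external error is $O(\log_+^2 j)$ while the truncation error from finitely many within-$K$ terms costs only an extra $\log_+\log_+ j$ factor is where the stated exponent comes from, and balancing these two is the crux of the optimization.
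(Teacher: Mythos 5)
You have correctly identified where the difficulty lies, but the proposal does not resolve it, and the fallback you offer is not available. Your Step 2 is the entire content of the problem: the external sum $\sum_{i\neq k,k+1} \frac{1}{(x_i-x_{k+1})(x_i-x_k)}$ in Lemma \ref{ident}(i) has, for $i$ close to the pair, \emph{positive} terms of size comparable to the inverse square of the distance to the nearest other zero, so when several zeros cluster these terms can overwhelm the repulsion $4/(x_{k+1}-x_k)$; no differential inequality of the schematic form $\partial_t g \ge c/g - C(\log_+^2 j)\,g$ holds for the minimal gap, and you do not exhibit any surrogate functional whose within-window interactions actually have the claimed favorable sign. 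With Step 2 open, Step 3 has nothing to run on. Worse, the patch you invoke at $t=0$ is false: there is no known lower bound of the shape $\exp(-C\log^2 j)$ (nor any quantitative lower bound at all, even on RH) for gaps between consecutive zeros of $\zeta$; under $\Lambda<0$ one only knows simplicity from \cite[Corollary 1]{csv}, and a quantitative gap bound at $t=0$ is exactly what Proposition \ref{gap} asserts, so appealing to it is circular. (Also, flowing the ODE forward from $t=0$ leaves the range in which Theorem \ref{dynam} is stated.)

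For comparison, the paper replaces the backwards-in-time Gr\"onwall on a single gap by a single-time \emph{cluster propagation} argument. Lemma \ref{ga} shows that if the zeros indexed by a finite set $K$ lie within diameter $\delta(K)$, then the cross-energy $\sum_{k\in K,\, j\notin K} E_{jk}(t)$ must be $\gg |K|^{-O(1)}\delta(K)^{-2}$; this is proved by combining the cross-energy inequality of Lemma \ref{ident}(ii), which self-improves to show the cross-energy stays comparable to its value at time $t$ on a backwards window of length $\asymp 1/(1+B(t))$ (so no endpoint information at $t=0$ or $t=\Lambda/2$ is needed), with the Virial identity of Lemma \ref{ident}(iv) and Gronwall. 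By \eqref{add} the cross-energy is governed by the nearest zero outside $K$, so a tight cluster forces another zero just outside it to be nearly as close, i.e.\ an enlarged cluster whose diameter is at most $\log^{O(1)} j$ times the old one. Iterating this enlargement at most $\log_+^2\xi_j$ times, starting from $K=\{j,j+1\}$, one reaches a block of $\gg \log_+^2\xi_j$ consecutive zeros, whose diameter is $\gg \log^{-O(1)} j$ by \eqref{add}; unwinding the $\ll \log^2 j$ multiplicative losses of size $\log^{O(1)} j$ each yields $|x_{j+1}(t)-x_j(t)| \gg \exp(-O(\log^2 j\,\log\log j))$. This spatial iteration is the missing idea, and it is also where the exponent in \eqref{mox} comes from (number of enlargement steps times the logarithm of the per-step loss), not from a window-size versus truncation-error optimization as you conjectured.
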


The bound in \eqref{mox} is probably not optimal, but for our application any bound that grows more slowly than (say) $|j|^{0.1}$ as $j \to \infty$ would suffice.

To prove this proposition, we first need the following variant of a result in \cite{csv}:

\begin{lemma}\label{ga}  Let $K$ be a finite subset of $\Z^*$ of cardinality $|K| \geq 2$, and let $\Lambda/2 \leq t \leq 0$.  Then
$$ \sum_{k,k' \in K:\, k \neq k'} (x_k(t)-x_{k'}(t))^2 \gg \frac{|K|^3}{1+\sum_{\substack{k \in K\\ j \not \in K}} E_{jk}(t)}.$$
\end{lemma}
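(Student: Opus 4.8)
The identity to exploit is the virial identity, Lemma~\ref{ident}(iv), which we write for the finite set $K$ as
\begin{equation*}
 \partial_t \sum_{k,k' \in K: k \neq k'} (x_k-x_{k'})^2 = 4|K|^2(|K|-1) - \sum_{k,k' \in K: k \neq k'} (x_k-x_{k'})^2 \sum_{j \not\in K} \frac{4}{(x_k-x_j)(x_{k'}-x_j)}.
\end{equation*}
The leading term $4|K|^2(|K|-1) \asymp |K|^3$ is exactly the source of the claimed lower bound; the strategy is to show that the cross term cannot overwhelm it, at least after integrating backwards from $t=0$ (where the virial quantity is essentially $\sum (x_k(0)-x_{k'}(0))^2$, which is large and well understood by Corollary~\ref{macro}, but we actually want a lower bound at a general time $t$, so we instead integrate from $t$ forward, or argue by a continuity/bootstrap). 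The cleanest route is: suppose for contradiction that at some time $t_0 \in [\Lambda/2, 0]$ the virial quantity $V(t_0) := \sum_{k \neq k'} (x_k(t_0)-x_{k'}(t_0))^2$ is much smaller than $|K|^3/(1 + \mathcal E(t_0))$ where $\mathcal E(t) := \sum_{k \in K; j \not\in K} E_{jk}(t)$. Then I want to show $\partial_t V \gg |K|^3$ in a neighborhood, which forces $V$ to have been even smaller (or negative) in the recent past, a contradiction with positivity, unless we run out of room — so the real content is controlling the cross term by $\mathcal E$ and by $V$ itself.

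**Controlling the cross term.** The quantity $\sum_{k,k'\in K} (x_k-x_{k'})^2 \sum_{j\not\in K} \frac{4}{(x_k-x_j)(x_{k'}-x_j)}$ is the delicate one. First symmetrize and use $(x_k - x_{k'})^2 = (x_k - x_j)^2 - 2(x_k-x_j)(x_{k'}-x_j) + (x_{k'}-x_j)^2$ to decompose the cross sum. The middle term contributes $-2\sum_{k,k'\in K}\sum_{j\not\in K}\frac{4(x_k-x_j)(x_{k'}-x_j)}{(x_k-x_j)(x_{k'}-x_j)} = -8|K|^2 \cdot \#\{j\not\in K\}$ formally — this diverges, so one must be careful and instead only sum $j$ over a large finite window $[-R,R]_{\Z^*}$, use the principal-value cancellation from \eqref{xji}, \eqref{xia} (the zeros interlace with the classical locations $\xi_j$, and $\sum_j 1/(x_k - x_j)$ converges conditionally), and send $R \to \infty$ at the end. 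The surviving symmetric pieces are $\sum_{k \neq k'}\sum_{j\not\in K}\left(\frac{x_{k'}-x_j}{x_k-x_j} + \frac{x_k-x_j}{x_{k'}-x_j}\right) \cdot 2 = $ a sum of terms each $\geq 2$ by AM-GM when $x_k-x_j$ and $x_{k'}-x_j$ have the same sign; the point is that for $j$ far from $K$ these ratios are $1 + o(1)$, and the genuinely dangerous contributions come from $j \not\in K$ that are nonetheless spatially close to $K$, i.e. adjacent to the edges of $K$. Those are controlled by Cauchy–Schwarz against $\mathcal E(t)$: $\sum_{k\in K; j\not\in K}\frac{|x_k-x_{k'}|}{|x_k-x_j||x_{k'}-x_j|} \ll (\operatorname{diam} K) \big(\sum_{k\in K; j\not\in K} E_{jk}\big)^{1/2} |K|^{1/2}$ or similar, and $\operatorname{diam} K \ll V^{1/2}$, closing the estimate in terms of $V$ and $\mathcal E$.

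**Assembling the differential inequality.** Putting the pieces together I expect to reach an inequality of the schematic shape
\begin{equation*}
 \partial_t V(t) \geq c|K|^3 - C\, V(t)\cdot\big(1 + \mathcal E(t)\big)\cdot(\text{some } O(1)\text{ factor from the geometry of the }\xi_j),
\end{equation*}
possibly with an extra harmless $\mathcal E(t)^{1/2} V(t)^{1/2}|K|^{1/2}$ term absorbed by Young's inequality into the other two. Here I would also need a lower bound on $\mathcal E(t)$ or rather just its finiteness, which is Theorem~\ref{dynam}/the conditional convergence, and an upper bound on the "geometry factor" coming from \eqref{add} and \eqref{xidif} (for $j$ just outside $K$, $|x_k - x_j| \gg 1/\log_+$, uniformly, which is exactly the macroscopic spacing). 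From such an inequality, if $V(t_0) \ll |K|^3/(1+\mathcal E(t_0))$ with a small enough implied constant, then $\partial_t V > 0$ with $\partial_t V \gg |K|^3$ near $t_0$; but $V$ is continuous, nonnegative, and we can run the inequality backward in $t$ over an interval of length comparable to $|\Lambda|\asymp 1$ within $[\Lambda/2,0]$, forcing $V$ to become negative, which is absurd. Hence $V(t) \gg |K|^3/(1+\mathcal E(t))$ for all $t$ in the range, which is the claim.

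**Main obstacle.** The hard part is making the principal-value truncation in the cross term rigorous while simultaneously extracting a clean bound in terms of $\mathcal E(t)$ — one has to commute the $R\to\infty$ limit with the time derivative, use the interlacing estimate \eqref{xji} to get the conditional convergence with a uniform-in-$t$ rate, and make sure the "far $j$" contributions (which are individually close to the equilibrium value but infinitely many) telescope correctly against the $4|K|^2(|K|-1)$ main term rather than destroying it. This is essentially the place where one must faithfully reproduce and quantify the corresponding step of \cite[Lemma~2.5 and its consequences]{csv}, adapted to the restricted time range $\Lambda/2 \leq t \leq 0$ which gives the uniform lower bound $|t|\gtrsim 1$ needed to run the backward argument.
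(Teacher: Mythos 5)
You correctly identify the virial identity, Lemma \ref{ident}(iv), as the engine, and your schematic differential inequality $\partial_t V \geq c|K|^3 - C\,V(t)(1+\mathcal{E}(t))$ is essentially the right shape (indeed the paper gets it more simply than you do: the cross term $\sum_{k,k'}(x_k-x_{k'})^2\sum_{j\notin K}\frac{4}{(x_k-x_j)(x_{k'}-x_j)}$ is already absolutely convergent by \eqref{xji0}, so no principal-value truncation or $R\to\infty$ limit is needed, and the elementary bound $\frac{1}{|(x_k-x_j)(x_{k'}-x_j)|}\leq \frac{1}{2}\bigl(\frac{1}{(x_k-x_j)^2}+\frac{1}{(x_{k'}-x_j)^2}\bigr)$ gives $O(V\mathcal{E})$ at once; your proposed expansion of $(x_k-x_{k'})^2$ manufactures divergences that are not actually present). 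The genuine gap is in the backward-in-time step. Your differential inequality at time $t'<t$ involves $\mathcal{E}(t')$, not $\mathcal{E}(t)$, while the conclusion you must reach is a bound in terms of $\mathcal{E}(t)$ at the single time $t$. You dismiss this by saying you only need ``finiteness'' of $\mathcal{E}$, but finiteness is useless: if $\mathcal{E}(t')$ is enormous at slightly earlier times (a zero just outside $K$ nearly colliding with one inside $K$ and then separating), the damping term $-C\,V(t')\,\mathcal{E}(t')$ can dominate the $|K|^3$ main term, $\partial_t V$ need not be $\gg |K|^3$, and your contradiction (forcing $V$ negative in the past) never materializes. Nothing in your write-up bounds $\mathcal{E}(t')$ in terms of $\mathcal{E}(t)$.

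This is precisely what the paper's proof supplies and your plan omits: the cross-energy inequality, Lemma \ref{ident}(ii) (cf.\ \cite[Lemma 2.5]{csv}), gives $\partial_{t'} B(t') \geq -8B(t')^2$ in the weak sense for $B(t')=\sum_{k\in K; j\notin K}E_{jk}(t')$, and a short continuity argument then shows $B(t')\leq 2B(t)$ on a backward window of length $\gg \frac{1}{1+B(t)}$ (this is also where the restriction to $\Lambda/2\leq t\leq 0$ is used, to guarantee room of size $\asymp 1$ before hitting $\Lambda$). Only with that uniform control can one freeze $B(t)$ in the damping term and run Gronwall on the virial quantity over a window of length $\gg \frac{1}{1+B(t)}$, yielding $A(t)\gg \frac{|K|^3}{1+B(t)}$ --- note also that one can only integrate for time $\asymp \frac{1}{1+B(t)}$, not $\asymp 1$ as you assert, which is exactly why the denominator $1+\mathcal{E}(t)$ appears in the conclusion. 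Without an ingredient playing the role of Lemma \ref{ident}(ii), your argument does not close.
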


Informally, this lemma asserts that the gaps within $K$ cannot be too small, unless there is also a small gap between an element of $K$ and an element outside of $K$.  The strategy will be to iterate this observation to show that a very small gap will therefore propagate until it contradicts \eqref{add}.

\begin{proof}  Let $A = A(t)$ and $B= B(t)$ denote the functions
\begin{align*}
A(t) &\coloneqq \sum_{k,k' \in K:\, k \neq k'} (x_k(t)-x_{k'}(t))^2 \\
B(t) &\coloneqq \sum_{\substack{k \in K \\ j \not \in K}} E_{kj}(t).
\end{align*}
The function $A(t)$ is continuously differentiable.  The corresponding claim for $B(t)$ is not obvious; however, 
the sum defining $B(t)$ is uniformly convergent (thanks to \eqref{xji0}) and hence $B(t)$ is at least continuous.
From Lemma \ref{ident}(ii) we have the lower bound
$$ \partial_{t'} B(t') \geq -8 B(t')^2$$
(cf. \cite[Lemma 2.5]{csv}) in the weak sense for $\Lambda < t' \leq 0$.  In particular, if there exists a time $\Lambda < t_- < t$ such that
$$ \sup_{t_- \leq t' \leq t} B(t') = B(t_-) = 2 B(t)$$
then we have
$$ B(t) - B(t_-) \geq -8 B(t)^2 (t-t_-)$$
which rearranges as
$$ t-t_- \geq \frac{1}{8B(t)}.$$
By continuity, we conclude that $B(t')$ cannot attain or exceed the value $2B(t)$ anywhere in the interval $(-\Lambda,t] \cap (t - \frac{1}{8B(t)}, t)$, that is to say that
$$ B'(t) < 2B(t)$$
whenever
$$ t - \frac{1}{8B(t)}, \Lambda < t' \leq t.$$
by hypothesis, this is a range of size at least
$$ \min(\frac{\Lambda}{2}, \frac{1}{16 B(t)}) \gg \frac{1}{1 + B(t)}.$$
On the other hand, for $t'$ in the above range, we see from Lemma \ref{ident}(iv) that
\begin{align*}
 \partial_{t'} A(t') &= 4|K|^2 (|K|-1) + O( B(t') A(t') ) \\
&= 4|K|^2 (|K|-1) + O( B(t) A(t') ) 
\end{align*}
and hence by Gronwall's inequality one has
$$
A(t) \gg \frac{4 |K|^2 (|K|-1)}{1+B(t)},$$
giving the claim.
\end{proof}

Now we fix a time $\Lambda/2 \leq t \leq 0$, and drop the dependence on $t$.
For any finite set $K \subset \Z^*$ with $|K| \geq 2$, set $\delta(K) := \max_{k,k' \in K} |x_k - x_{k'}|$ to be the largest gap in $K$.  Then
$$ \sum_{k,k' \in K:\, k \neq k'} (x_k-x_{k'})^2 \leq |K|^2 \delta(K)^2 $$
and so from the above lemma we have
$$ 1+\sum_{\substack{k \in K \\ j \not \in K}} E_{kj} \geq |K|^{-5} \delta(K)^{-2}.$$
In particular, if $\delta(K) \leq c |K|^{-5/2}$ for a sufficiently small absolute constant $c>0$, then we have
$$ \sum_{\substack{k \in K \\ j \not \in K}} E_{kj} \geq |K|^{-5} \delta(K)^{-2},$$
and hence by the pigeonhole principle there exists $k \in K$ such that
$$ \sum_{j \not \in K} E_{kj} \gg |K|^{-6} \delta(K)^{-2}.$$
From \eqref{add}, \eqref{ekj-def} we have
$$ \sum_{j \not \in K} E_{kj} \ll 1 + (\log^2_+ \xi_k) \min_{j \not \in K} |x_k-x_j|^{-2}.$$
We conclude that if $\delta(K) \leq c |K|^{-3}$ for a sufficiently small $c>0$, then there exists $k \in K$ such that
$$ (\log^2_+ \xi_k) \min_{j \not \in K} |x_k-x_j|^{-2} \gg |K|^{-6} \delta(K)^{-2}$$
or equivalently
$$ \min_{j \not \in K} |x_k-x_j| \ll |K|^3 \delta(K) \log_+ \xi_k. $$
Now suppose that $K$ is a discrete interval $[k_-,k_+]_{\Z^*}$ for some $1 < k_- < k_+$.  Then 
$$ \min_{j \not \in K} |x_k-x_j| \geq \min( |x_{k_-} - x_{k_--1}|, |x_{k_+} - x_{k_++1}| )$$
and thus (assuming that  $\delta(K) \leq c |K|^{-3}$) we have
$$ \min( |x_{k_-} - x_{k_--1}|, |x_{k_+} - x_{k_++1}| ) \ll |K|^3 \delta(K) \log_+ k_+$$
which implies that
\begin{equation}\label{kkp}
 \delta(K') \ll |K|^3 \log(k_+) \delta(K)
\end{equation}
whenever $\delta(K) \leq c |K|^{-3}$, where $K'$ is either the interval $K' = [k_- - 1, k_+]_{\Z^*}$ or $K' = [k_-, k_+ + 1]_{\Z^*}$.  In either case, we call $K'$ an \emph{enlargement} of $K$.

Now we can prove Proposition \ref{gap}.  By symmetry we may assume $j$ is positive.  We can also assume $j$ is large, as the claim follows from compactness for bounded $j$.  As before, we suppress the dependence on $t$.  It thus suffices to show that
$$ \log \frac{1}{|x_{j+1} - x_j|} \ll (\log^2 j) \log\log j $$
for large positive $j$.

By iterating \eqref{kkp} at most $\log j$ times starting from the interval $K_1 \coloneqq [j,j+1]_{\Z^*}$, we can find a sequence
$$ [j,j+1]_{\Z^*} = K_1 \subset K_2 \subset \dots \subset K_r$$
of discrete intervals $K_i = [k_{-,i}, k_{+,i}]_{\Z^*}$ for some $1 \leq r \leq \log^2_+ \xi_j$ with the following properties:
\begin{itemize}
\item[(i)] For each $1 \leq i < r$, $K_{i+1}$ an enlargement of $K_i$ with $\delta(K_{i+1}) \ll |K_i|^3 \delta(K_i) \log_+ k_{+,i}$.
\item[(ii)] Either $\delta(K_r) > c |K_r|^{-3}$, or $r+1 > \log^2_+ \xi_j$.
\end{itemize}
Since $|K_i| \leq r+1 \ll \log^2_+ \xi_j \ll \log^2 j$ and $k_{+,i} \leq j + r \ll j$, we have from property (i) that
$$ \delta(K_{i+1}) \ll j \log^2 j \delta(K_i)$$
for all $1 \leq i < r$, and hence
$$ \delta(K_r) \ll \exp( O( \log^2 j \log\log j ) ) \delta(K_1).$$
On the other hand, from property (ii), using the bound $|K_r| \leq r+1 \ll \log^2 \xi_j$ in the first case and \eqref{add} and the pigeonhole principle in the second case, we have
$$ \delta(K_r) \gg \log^{-6} \xi_j \gg \log^{-6} j.$$
Combining the two estimates, we obtain the claim.

\section{A weak bound on integrated energy}

In addition to truncations of the Hamiltonian, we will also need to control truncations of the energy $\sum_{j \neq k} E_{jk}(t)$.  While Proposition \ref{gap} provides some control on the summands here, it is too weak for our purposes (being of worse than polynomial growth in $j,k$), and we will need the following integrated bound that, while still weak, is at least of polynomial growth:

\begin{proposition}[Weak bound on integrated energy]\label{energy-weak}  Let $J > 0$.  Then
$$ \int_{\Lambda/2}^0 \sum_{J \leq j < k \leq 2J} E_{jk}(t)\, dt \ll J^2 \log_+^{O(1)} J.$$
\end{proposition}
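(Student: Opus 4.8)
The plan is to run a truncated version of the Hamiltonian monotonicity formula \eqref{het}, i.e.\ Lemma \ref{ident}(v), on the block $K \coloneqq \{ n \in \Z : J \le n \le 2J \} \subset \Z^*$, and to absorb the resulting ``cross terms'' (interactions of $K$ with its complement) into the allowed polynomial loss. We may assume $J$ is large, since for bounded $J$ the integrand is a finite sum of functions continuous (hence bounded) on $[\Lambda/2,0]$ by \cite[Corollary 1]{csv}. Note $\sum_{J \le j < k \le 2J} E_{jk} = \tfrac12 \sum_{k,k'\in K:\, k\ne k'} E_{kk'}$ and $|K| \asymp J$. Integrating Lemma \ref{ident}(v) over $t \in [\Lambda/2,0]$ gives
\[
4 \int_{\Lambda/2}^0 \sum_{\substack{k,k'\in K\\ k\ne k'}} E_{kk'}(t)\,dt \;=\; \sum_{\substack{k,k'\in K\\ k\ne k'}} \bigl( H_{kk'}(\Lambda/2) - H_{kk'}(0) \bigr) \;+\; 2\int_{\Lambda/2}^0 \mathcal C_K(t)\,dt ,
\]
where $\mathcal C_K(t) \coloneqq \sum_{k,k'\in K;\, j \notin K;\, k\ne k'} (x_j-x_k)^{-1}(x_j-x_{k'})^{-1}$ (all functions of $t$). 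The boundary term is harmless: by Proposition \ref{gap}, $H_{kk'}(\Lambda/2) \le \max_{l\ne k} H_{kl}(\Lambda/2) \ll \log_+^{O(1)} J$ for each $k \in K$, while $-H_{kk'}(0) = \log|x_k(0)-x_{k'}(0)| \le \log(x_{2J}(0)-x_J(0)) \ll \log_+ J$ by \eqref{xji0} and \eqref{add}; summing over the $\ll J^2$ pairs this is $\ll J^2 \log_+^{O(1)} J$. Hence it suffices to prove $\int_{\Lambda/2}^0 \mathcal C_K(t)\,dt \ll J^2 \log_+^{O(1)} J$.

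I would then dominate $\mathcal C_K$ by a manifestly nonnegative quantity, $\mathcal C_K(t) = \sum_{j\notin K}\bigl(\sum_{k\in K}(x_j-x_k)^{-1}\bigr)^2 - \sum_{j\notin K;\, k\in K} E_{jk} \le \widetilde{\mathcal C}_K(t) \coloneqq \sum_{j\notin K}\bigl(\sum_{k\in K}(x_j(t)-x_k(t))^{-1}\bigr)^2$, and split the outer sum by the distance of $x_j$ from the span of $K$. Fix a large constant, set $M \coloneqq \lceil \tfrac12\log_+^2 J\rceil$, and put $\partial K \coloneqq [J-M,\,2J+M]_{\Z^*} \setminus K$, a \emph{fixed} set of cardinality $\ll \log_+^2 J$. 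By \eqref{add} (and \eqref{xidif} for the indices $j$ much smaller than $J$), for every $t$ and every $j \notin K \cup \partial K$ the zero $x_j(t)$ lies at distance $\gg \log_+ J$ from $[x_J(t), x_{2J}(t)]$; using this together with the macroscopic spacing \eqref{add}, \eqref{xidif} and Theorem \ref{rmt} (at most $\ll \log_+^2 T$ zeros in a unit interval around $T \asymp J/\log_+ J$), a direct dyadic estimate bounds the contribution of these $j$ to $\widetilde{\mathcal C}_K(t)$ by $\ll J\log_+^{O(1)}J$, uniformly in $t$, hence $\ll J\log_+^{O(1)}J$ after integration. For $j \in \partial K$, splitting the inner sum at $|x_j-x_k|=1$ and applying Cauchy--Schwarz to the $\ll \log_+^2 J$ terms with $|x_j-x_k| < 1$ (again using Theorem \ref{rmt}) gives $\bigl(\sum_{k\in K}(x_j-x_k)^{-1}\bigr)^2 \ll \log_+^2 J \sum_{k\in K} E_{jk}(t) + \log_+^{O(1)} J$. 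Summing and combining, the whole problem reduces to the \emph{collar cross-energy} estimate
\[
\int_{\Lambda/2}^0 \sum_{j\in\partial K;\, k\in K} E_{jk}(t)\,dt \;\ll\; J^2 \log_+^{O(1)} J .
\]

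For this bound one cannot argue pointwise in $t$: the dynamics \eqref{ode} allow a pair $x_j, x_k$ to make a transient close approach, and then $E_{jk}$ is momentarily as large as $\exp(O(\log_+^2 J \log_+\log_+ J))$ by Proposition \ref{gap}, far beyond polynomial size. The idea is that such spikes are time-integrable, and one extracts this from the dynamics. A first reduction is the elementary telescoping inequality: for $j$ just below $K$ and $k \in K$, the distance $x_k - x_j$ is a sum of consecutive gaps one of which is $x_k - x_{k-1}$, so $E_{jk} \le E_{(k-1)k}$ (and symmetrically for $j$ above $K$); hence $\sum_{k\in K} E_{jk}(t) \le \sum_{l} E_{l(l+1)}(t)$, the sum over consecutive pairs in a fixed window of length $O(\log_+^2 J)$ about $K$, \emph{independently of which $j \in \partial K$ one takes}. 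This reduces matters to controlling $\int_{\Lambda/2}^0 E_{l(l+1)}(t)\,dt$ for consecutive pairs. For a consecutive pair no zero lies strictly between $x_l$ and $x_{l+1}$, so in Lemma \ref{ident}(i) the off-pair interaction sum $\sum_{i\ne l,l+1}(x_i-x_l)^{-1}(x_i-x_{l+1})^{-1}$ is manifestly \emph{nonnegative}; integrating that identity therefore expresses $4\int E_{l(l+1)}\,dt$ as a tame logarithmic boundary term (bounded via Proposition \ref{gap} and \eqref{add}) plus the integral of that nonnegative interaction sum, whose far-field part is handled pointwise via Theorem \ref{rmt} and whose near-field part is controlled using the cross-energy monotonicity Lemma \ref{ident}(ii) — in the sharpened form of \cite[Lemma 2.5]{csv}, which supplies a formula for the defect in the inequality — together with the generous factor of $J$ separating the target $J^2\log_+^{O(1)}J$ from the ``equilibrium'' size $\asymp J\log_+^2 J$ of the left-hand side.

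Everything down to the reduction to the collar cross-energy is routine bookkeeping with Lemma \ref{ident}(v), Proposition \ref{gap}, and Theorem \ref{rmt}. The genuine difficulty is the last step: making the ``spikes are integrable'' heuristic quantitative when one cannot exclude small gaps, which forces one to exploit the repulsion built into \eqref{ode} — through the precise interplay of the several monotonicity identities in Lemma \ref{ident} and the sharpened cross-energy inequality of \cite{csv} — rather than any pointwise control; the fact that the desired estimate is lossy by a full power of $J$ is exactly what makes pushing these estimates through feasible.
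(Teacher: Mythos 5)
Your setup is sound and parallels the paper's first step: integrate Lemma \ref{ident}(v) over $[\Lambda/2,0]$, bound the boundary Hamiltonian terms by Proposition \ref{gap} (giving $O(J^2\log_+^{O(1)}J)$), dispose of the far-field cross terms pointwise, and reduce to the cross-energy between $K$ and a collar of $O(\log_+^2 J)$ indices. But the last step — the collar cross-energy estimate — is where your argument has a genuine gap, and it is circular as sketched. Your telescoping inequality $E_{jk}\le E_{(k-1)k}$ bounds $\sum_{k\in K}E_{jk}$ by the sum of $E_{l(l+1)}$ over \emph{all} consecutive pairs spanning $K$ (a window of length $\asymp J$, not $O(\log_+^2 J)$ as you state), so after summing over the $\ll\log_+^2 J$ collar indices you have bounded the collar cross-energy by $\log_+^{O(1)}J$ times essentially the very quantity $\int\sum_{J\le j<k\le 2J}E_{jk}\,dt$ you are trying to estimate. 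To break the circle you would need a per-pair bound like $\int_{\Lambda/2}^0 E_{l(l+1)}(t)\,dt\ll J\log_+^{O(1)}J$, and your sketch for it does not deliver one: integrating $\partial_t H_{l(l+1)}=-4E_{l(l+1)}+2S_l$ (with $S_l\ge 0$ the off-pair interaction sum) reduces the problem to $\int S_l\,dt$, but each near-field term of $S_l$ is bounded below and above by interaction energies $E_{il}$, $E_{i(l+1)}$ of \emph{other} nearby pairs, so you are back where you started; Lemma \ref{ident}(ii) and the sharpened \cite[Lemma 2.5]{csv} only bound the \emph{decrease} of cross-energies (they are what give Proposition \ref{gap}), and the only pointwise input available at this stage is the super-polynomial bound $E_{jk}\le\exp(O(\log_+^2 J\,\log_+\log_+ J))$, which no amount of ``generous factor of $J$'' converts into a polynomial time-integrated bound.

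The paper's proof avoids needing any absolute bound on the collar contribution. Writing $Q_I\coloneqq\int_{\Lambda/2}^0\sum_{j,k\in I:j\ne k}E_{jk}\,dt$, it notes that $Q_{[J,2J]_{\Z^*}}\gg J\log^{-O(1)}J$ while $Q_{[0.5J,3J]_{\Z^*}}\ll\exp(O(\log^2 J\log\log J))$, and — since this ratio is far smaller than $(1+J^{-0.1})^{0.5J/J^{0.1}}$ — pigeonholes among nested enlargements to find an interval $K\supset[J,2J]_{\Z^*}$ inside $[0.5J,3J]_{\Z^*}$ whose slight enlargement $K'$ (by $J^{0.1}$ on each side) satisfies $Q_{K'}\le(1+J^{-0.1})Q_K$. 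Running your same Hamiltonian-identity computation on \emph{this} $K$, the collar cross-energy is then $\le Q_{K'}-Q_K\le J^{-0.1}Q_K$, which is absorbed into the left-hand side of $Q_K\ll J^2\log_+^{O(1)}J+J^{-0.1}Q_K$, and monotonicity in $I$ finishes. This adaptive choice of $K$ — comparing the collar to the bulk rather than bounding it absolutely — is the key idea missing from your proposal; without it (or some genuinely new per-pair estimate) the argument does not close.
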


We will use this bound to justify an interchange of a derivative and an infinite series summation in the next section.

\begin{proof}  We may take $J$ to be large, as the claim is trivial for $J$ in the compact region $J=O(1)$.  For any discrete interval $I$, let $Q_I$ denote the quantity
$$ Q_I \coloneqq \int_{\Lambda/2}^0 \sum_{j,k \in I: j \neq k} E_{jk}(t)\, dt.$$
From \eqref{add} we have a crude lower bound 
$$ Q_{[J,2J]_{\Z^*}} \gg J \log^{-O(1)} J$$
while from Proposition \ref{gap} we have an extremely crude upper bound
$$ Q_{[0.5 J, 3J]_{\Z^*}} \ll \exp( O( \log^2 J \log\log J ) ).$$
The ratio between $Q_{[0.5 J, 3J]_{\Z^*}}$ and $Q_{[J,2J]_{\Z^*}}$ is thus less than $(1+J^{-0.1})^{0.5 J / J^{0.1}}$.  By the pigeonhole principle, we can then therefore find an interval $K \coloneqq [J_-,J_+]_{\Z^*}$ containing $[J,2J]_{\Z^*}$ and contained in $[0.5 J + J^{0.1}, 3J - J^{0.1}]_{\Z^*}$, such that
\begin{equation}\label{pig}
 Q_{K'} \leq (1 + J^{-0.1}) Q_{K},
\end{equation}
where $K' \coloneqq [J_--J^{0.1}, J_+ + J^{0.1}]_{\Z^*}$ is a slight enlargement of $K$.  Next, we apply Lemma \ref{ident}(v) and use the fundamental theorem of calculus to obtain the identity
$$
\sum_{k,k' \in K:\, k \neq k'} H_{kk'}(\Lambda/2) - H_{kk'}(0) 
 = 4 Q_{K} - 2 \int_{\Lambda/2}^0 \sum_{\substack{j \not \in K \\ k,k' \in K:\, k \neq k'}} \frac{1}{(x_j(t)-x_k(t))(x_j(t)-x_{k'}(t))}\, dt.$$
From Proposition \ref{gap}, the left-hand side is $O( J^2 \log^{O(1)} J )$, thus
$$Q_{K}  \ll J^2 \log^{O(1)} J + 
\int_{\Lambda/2}^0 \sum_{\substack{j \not \in K \\ k,k' \in [J_-,J_+]_{\Z^*}:\, k \neq k'}} \frac{1}{(x_j(t)-x_k(t))(x_j(t)-x_{k'}(t))}\, dt.$$
Using $ab \ll a^2+b^2$, we thus have
$$Q_{K}  \ll J^2 \log^{O(1)} J + 
\int_{\Lambda/2}^0 \sum_{\substack{j \not \in K \\ k \in K}} \frac{1}{(x_j(t)-x_k(t))^2}\, dt.$$
Using \eqref{add}, the contribution to the integral of those $j$ outside of $K'$ may be crudely bounded by $O(J^2 \log^{O(1)} J)$ (in fact one can improve this bound to $O(J \log^{O(1)} J)$ if desired, although this will not help us significantly here).  The contribution of those $j$ inside $K'$ may be bounded by
$$ Q_{K'} - Q_{K} \leq J^{-0.1} Q_{K},$$
thanks to \eqref{pig}.  We conclude that
$$Q_{K}  \ll J^2 \log^{O(1)} J $$
and the claim follows.
\end{proof}

\section{Strong control on integrated energy}
\label{sec:strongcontrol}

As discussed previously, the strategy to establish convergence to local equilibrium is to study (a suitable variant of) the formal Hamiltonian
$$
 {\mathcal H}(t) = \sum_{j,k \in \Z^*: j \neq k} H_{jk}(t)
$$
and its derivatives, with the intention of controlling (suitable variants of) integrated energies such as
$$ \int_{\Lambda/4}^0 \sum_{j,k \in \Z^*: j \neq k} E_{jk}(t)\, dt.$$
Unfortunately, even with the bound just obtained in Proposition \ref{gap}, the above expression is far from being absolutely convergent.  To address this issue we need to mollify and renormalize the Hamiltonian and the energy in a number of ways.  We renormalize the inverse square function $x \mapsto \frac{1}{|x|^2}$ for $x \neq 0$ that appears in the definition of the energy interactions $E_{jk}(t)$ by introducing the modified potential
$$ V(x) \coloneqq \frac{1}{|x|^2} - 1 + 2(|x|-1),$$
which (for positive $x$) is $\frac{1}{x^2}$ minus the linearization $1 - 2(x-1)$ of that function at $x=1$.  As $\frac{1}{x^2}$ is convex, $V$ is non-negative, and one can verify the asymptotics
\begin{equation}\label{vlog}
\begin{split}
V(x) &\asymp \frac{1}{|x|^2} \quad \hbox{ for } |x| \leq 1/2 \\
V(x) &\asymp (|x|-1)^2 \quad \hbox{ for } 1/2 < |x| \leq 2 \\
V(x) &\asymp |x|  \quad \hbox{ for } |x| > 2.
\end{split}
\end{equation}
For any distinct $j,k$ and any $\Lambda/2 \leq t \leq 0$, we define the renormalization
$$ \tilde E_{jk}(t) \coloneqq \frac{1}{|\xi_k - \xi_j|^2} V\left( \frac{x_k(t) - x_j(t)}{\xi_k - \xi_j} \right)$$
of the interaction energy $E_{jk}(t)$;
we observe that
\begin{equation}\label{tejk}
 \tilde E_{jk}(t) = E_{jk}(t) - \frac{1}{|\xi_k-\xi_j|^2} + 2 \frac{(x_k(t)-\xi_k)-(x_j(t)-\xi_j)}{(\xi_k-\xi_j)^3}.
\end{equation}

For any discrete interval $I \subset \Z^*$, we define the renormalized energy
$$ \tilde E^I(t) \coloneqq \sum_{j,k \in I: j \neq k} \tilde E_{jk}(t);$$
this is clearly a non-negative quantity that is non-decreasing in $I$.  It can also be simplified up to negligible error as follows:

\begin{lemma}\label{eb}  If $I = [I_-,I_+]_{\Z^*}$ is a discrete interval and $\Lambda/2 \leq t \leq 0$, then
$$
\tilde E^I(t) = \left(\sum_{j,k \in I:\, j \neq k} E_{jk}(t) - \frac{1}{|\xi_k-\xi_j|^2}\right) + O( \log_+^{O(1)}( |I_-|+|I_+| ) ).$$
\end{lemma}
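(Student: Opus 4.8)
The plan is to reduce everything, via the algebraic identity \eqref{tejk}, to a single error sum and then to harvest the required decay from the near-antisymmetry of the kernel $(\xi_k-\xi_j)^{-3}$ together with the spacing asymptotics \eqref{xidif}, \eqref{add-2} for the classical locations.

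First I would sum \eqref{tejk} over all distinct $j,k\in I$; since $I$ is finite there is nothing to justify, and one obtains
$$\tilde E^I(t) = \Bigl( \sum_{j,k\in I: j\neq k} E_{jk}(t) - \frac{1}{|\xi_k-\xi_j|^2} \Bigr) + 2\sum_{j,k\in I: j\neq k} \frac{(x_k(t)-\xi_k)-(x_j(t)-\xi_j)}{(\xi_k-\xi_j)^3}.$$
Thus it suffices to bound the last sum by $O(\log_+^{O(1)}(|I_-|+|I_+|))$. Setting $e_k \coloneqq x_k(t)-\xi_k$ and observing that $(\xi_k-\xi_j)^3$ is antisymmetric under swapping $j,k$ while $e_k-e_j$ is as well, a finite rearrangement collapses this sum to $4\sum_{k\in I} e_k\, g_I(k)$ with $g_I(k)\coloneqq \sum_{j\in I:j\neq k}(\xi_k-\xi_j)^{-3}$. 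By \eqref{xji} of Corollary \ref{macro} (valid since $\Lambda<t\leq0$) we have $|e_k|\ll \log_+\xi_k \ll \log_+(|I_-|+|I_+|)$ for every $k\in I$, so it remains to establish the purely arithmetic bound $\sum_{k\in I}|g_I(k)| \ll \log_+^{O(1)}(|I_-|+|I_+|)$.

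Writing $N\coloneqq |I_-|+|I_+|$, for each $k$ I would split the sum defining $g_I(k)$ into a \emph{far} part ($|j-k|>|k|/2$, which in particular absorbs every $j$ of the opposite sign to $k$) and a \emph{near} part ($|j-k|\leq|k|/2$, which forces $j$ to have the same sign as $k$). On the far part, \eqref{xidif} gives $|\xi_k-\xi_j|\gg (|k|+|j|)/\log_+ N$, so that $|\xi_k-\xi_j|^{-3}\ll \log_+^3 N/(|k|+|j|)^3$; since $\sum_{a,b\geq1}(a+b)^{-3}=O(1)$, the total far contribution (summed over all $k\in I$) is $O(\log_+^3 N)$. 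The near part is where the genuine cancellation sits: by the symmetry $\xi_{-j}=-\xi_j$ we may assume $k>0$, and then I would pair the term $j=k+m$ with $j=k-m$ whenever both lie in $I$. Feeding the second-order spacing information \eqref{add-2} into this pairing shows that $\xi_{k+m}-\xi_k$ and $\xi_k-\xi_{k-m}$ agree to within $O(m^2/(k\log_+^2\xi_k))$ while each is $\asymp m/\log_+\xi_k$, so the paired term is $O(\log_+^2\xi_k/(k m^2))$ and the pairs sum to $O(\log_+^2\xi_k/k)$ for each $k$; the remaining unpaired terms (those $m$ for which only one of $k\pm m$ lies in $I$, together with the finitely many $k$ of bounded size) are crudely $O(\log_+^3\xi_k)$ times an inverse-square weight in the distance to $\partial I$, hence also summable in $k$. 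Summing the near contribution over $k\in I$ thus costs only the harmonic sum $\sum_{1\leq|k|\leq N}|k|^{-1}\ll \log_+ N$, giving $O(\log_+^3 N)$. Combining the two parts yields $\sum_{k\in I}|g_I(k)|\ll \log_+^3 N$, and hence the error sum is $O(\log_+^4 N)$, as required.

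The hard part is clearly the near-diagonal estimate for $g_I(k)$: a term-by-term bound on $(\xi_k-\xi_j)^{-3}$ only gives $\log_+^3 N$ per term with $\asymp N$ terms, which is hopelessly lossy, so one is obliged to exploit the cancellation between $j=k\pm m$ and thereby the precise second-order behaviour of the classical locations recorded in \eqref{add-2}. The algebraic reduction and the far estimate are routine.
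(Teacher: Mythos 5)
Your proposal is correct and follows essentially the same route as the paper: reduce via \eqref{tejk}, desymmetrize, bound $|x_j(t)-\xi_j|$ by \eqref{xji}, and control $\sum_{k}\bigl|\sum_{j}(\xi_k-\xi_j)^{-3}\bigr|$ by splitting into a far range handled with \eqref{xidif} and a near range where \eqref{add-2} supplies the cancellation. Your pairing of $j=k\pm m$ is just a repackaging of the paper's step of extracting the odd main term $\frac{\log^3\xi_j}{(4\pi)^3}\frac{1}{(k-j)^3}$ plus an $O(\log^{O(1)}/(j(k-j)^2))$ error, so the two arguments coincide in substance.
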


\begin{proof}  By symmetry and the triangle inequality we may assume without loss of generality that $0 \leq I_- \leq I_+$; we may then assume that $I_+$ is large, as the claim is trivial for $I_+$ in the compact region $I_+ = O(1)$.  By \eqref{tejk}, it suffices to show that
$$ \sum_{j,k \in I:\, j \neq k} \frac{(x_k(t)-\xi_k)-(x_j(t)-\xi_j)}{(\xi_k-\xi_j)^3} \ll \log^{O(1)} I_+.$$
We may desymmetrize the left-hand side as
$$ 2 \sum_{j \in I} (x_j(t)-\xi_j) \sum_{k \in I:\, k \neq j} \frac{1}{(\xi_k - \xi_j)^3}.$$
By \eqref{xji}, it thus suffices to show that
\begin{equation}\label{ji}
 \sum_{j \in I} \left| \sum_{k \in I:\, k \neq j} \frac{1}{(\xi_k - \xi_j)^3} \right| \ll \log^{O(1)} I_+.
\end{equation}
Consider the inner sum $\sum_{k \in I:\, k \neq j} \frac{1}{(\xi_k - \xi_j)^3}$.  From \eqref{xidif} we see that the contribution to this inner sum of those $k$ with $|k-j| \geq \frac{1}{2} j$ (say) is $O\left( \frac{\log^{O(1)} I_+}{j^2} \right)$.  For the remaining range $|k-j| < \frac{1}{2} j$, we can use \eqref{add-2} to estimate
$$
\frac{1}{(\xi_k - \xi_j)^3} = \frac{\log^3 \xi_j}{(4\pi)^3} \frac{1}{(k-j)^3} + O \left( \frac{\log^{O(1)} I_+}{j (k-j)^2} \right) $$
and so on summing we obtain
$$
\sum_{k \in I:\, k \neq j} \frac{1}{(\xi_k - \xi_j)^3} = \frac{\log^3 \xi_j}{(4\pi)^3} \sum_{k \in I:\, 0 < |k-j| < \frac{1}{2} j} \frac{1}{(k-j)^3}
+ O \left( \frac{\log^{O(1)} I_+}{j} \right).$$
As $k \mapsto \frac{1}{k-j}$ is odd around $j$, the sum on the right-hand side can be estimated as $O( \frac{1}{\max(|j-I_-|, |I_+-j|)^2} )$.  Using this bound we obtain \eqref{ji}.
\end{proof}

In this section we will establish the following significant improvement to Proposition \ref{energy-weak}:

\begin{theorem}\label{prelim}  For any $T>0$, one has
\begin{equation}\label{prelim-2}
\int_{\Lambda/4}^0 \tilde E^{[0.5 T \log T, 3 T \log T]_{\Z^\ast}}(t)\, dt = o_{T \to \infty}( T \log^3_+ T ).
\end{equation}
\end{theorem}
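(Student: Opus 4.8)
The plan is to exploit the formal gradient-flow structure of the dynamics \eqref{ode} via a renormalized Hamiltonian matched to the renormalized energy. Parallel to $\tilde E_{jk}$, I would introduce, for distinct $j,k$ and $\Lambda/2 \le t \le 0$,
$$\tilde H_{jk}(t) \coloneqq \log\frac{|\xi_k-\xi_j|}{|x_k(t)-x_j(t)|} + \frac{(x_k(t)-x_j(t))-(\xi_k-\xi_j)}{\xi_k-\xi_j},$$
the Bregman divergence of the convex function $x\mapsto-\log|x|$ between the true separation $x_k(t)-x_j(t)$ and the equilibrium separation $\xi_k-\xi_j$; this is non-negative, is comparable to $(s_{jk}-1)^2$ near the equilibrium value $s_{jk}\coloneqq(x_k-x_j)/(\xi_k-\xi_j)=1$, and grows only linearly as $s_{jk}\to\infty$, matching the growth of $\tilde E_{jk}$ (cf.\ \eqref{vlog}). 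Writing $\tilde{\mathcal H}^I(t)\coloneqq\sum_{j,k\in I:\,j\neq k}\tilde H_{jk}(t)$, a computation modelled on Lemma \ref{ident}(i),(v) and on the proof of Lemma \ref{eb} — with Proposition \ref{energy-weak} used to justify differentiating the infinite series term by term — should yield a monotonicity identity of the form
$$\partial_t\tilde{\mathcal H}^I(t) = -4\,\tilde E^I(t) + \mathcal{C}^I(t) + O\!\left(\log_+^{O(1)}(|I_-|+|I_+|)\right),$$
where $\mathcal{C}^I$ collects the cross-interactions between indices inside and outside $I$ and the last error (from the linear correction term) is disposed of exactly as in Lemma \ref{eb}. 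Integrating this over $[\Lambda/4,0]$ and using $\tilde{\mathcal H}^I\ge0$ (so that the final-time value drops out with the right sign) reduces the theorem to two estimates, each of size $o(T\log_+^3 T)$: an upper bound for $\tilde{\mathcal H}^K$ at the initial time, and a bound for $\int_{\Lambda/4}^0|\mathcal{C}^K(t)|\,dt$. Here $K$ is the interval $[0.5T\log T,3T\log T]_{\Z}$ of the statement, or a slight enlargement of it, since $\tilde E^I\le\tilde E^K$ whenever $I\subseteq K$.

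For the cross term, the plan is to pigeonhole over a dyadic family of ``collars'' between $[0.5T\log T,3T\log T]$ and, say, $[0.1T\log T,10T\log T]$, in the manner of Bourgain \cite{bourgain}, so as to select one, $K$, across whose boundary the time-integrated cross-energy $\int_{\Lambda/2}^0\sum_{k\in K,\,j\notin K}\tilde E_{jk}\,dt$ is small relative to $\int\tilde E^K$; then each summand of $\mathcal{C}^K(t)$ carries a small factor $\tfrac1{\xi_k-\xi_{k'}}-\tfrac1{x_k-x_{k'}}=O(\log_+ T)/(\xi_k-\xi_{k'})^2$ (by Corollary \ref{macro}) against a telescoping boundary sum $\sum_{i\notin K}\bigl(\tfrac1{x_k-x_i}-\tfrac1{x_{k'}-x_i}\bigr)$, so that the good boundary makes the total $o(T\log_+^3 T)$.

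The main obstacle is the initial-time bound on $\tilde{\mathcal H}^K$. The only a priori input available at times bounded away from $0$ is the coarse macroscopic estimate $x_j(t)=\xi_j+O(\log_+\xi_j)$ of Corollary \ref{macro}, and inserting this into $\tilde H_{jk}\asymp(s_{jk}-1)^2$ yields only $O(T\log_+^5 T)$ — too large by two logarithmic factors, the loss coming from the amplification of the $O(\log_+\xi_j)$ positional error by $1/(\xi_k-\xi_j)^2$ on short length scales. This is exactly why the statement is restricted to $[\Lambda/4,0]$ rather than the $[\Lambda/2,0]$ of Propositions \ref{gap} and \ref{energy-weak}: one should not evaluate $\tilde{\mathcal H}^K$ at the fixed endpoint $\Lambda/4$, but integrate the monotonicity identity from a cleverly chosen time $t_\ast\in[\Lambda/2,\Lambda/4]$, pigeonholing in $t_\ast$ via Proposition \ref{energy-weak} on $[\Lambda/2,\Lambda/4]$ and the local monotonicity of the energy (Lemma \ref{ident}(iii)), and splitting $\tilde{\mathcal H}^K(t_\ast)$ into a long-range part — bounded directly by Corollary \ref{macro}, which is acceptable once the range cutoff $L$ is taken a little above $\log_+^2 T$ — and a short-range part, bounded using $\tilde H_{jk}\ll(\xi_k-\xi_j)^2\,\tilde E_{jk}$ by $\ll(L^2/\log_+^2 T)\,\tilde E^K(t_\ast)$, which the choice of $t_\ast$ keeps small. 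Since a single pass only improves the bound (from $T^2$ via Proposition \ref{energy-weak} to roughly $T\log_+^5 T$ via this scheme), one expects to iterate: feeding the improved integrated-energy bound back into the choice of $t_\ast$ and re-optimizing $L$ drives the bound down towards $o(T\log_+^3 T)$. Making this iteration genuinely converge to the stated bound — tracking how $L$, the time-pigeonhole, the collar-pigeonhole, and the losses in Proposition \ref{energy-weak} interact, and handling the anomalously clustered pairs that survive the short-range estimate by means of Proposition \ref{gap} — is where essentially all of the work lies, and matches the paper's remark that the final estimate only barely suffices.
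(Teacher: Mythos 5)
Your overall architecture is the same as the paper's: renormalize the interaction to the Bregman divergence $L\bigl(\tfrac{x_k-x_j}{\xi_k-\xi_j}\bigr)$ of $-\log$ (this is exactly the paper's $\tilde H_{jk}$ and Lemma \ref{hamil-form}), prove $\partial_t \tilde{\mathcal H} = -4\tilde E + \mathrm{errors}$ (the paper's Proposition \ref{dorium}, justified term-by-term via Proposition \ref{energy-weak}), show the cross/renormalization errors are negligible, and integrate. Two remarks on the error handling: the paper uses smooth weights $\psi_T$ and an index-proximity cutoff $j \sim_T k$ rather than a sharp interval, and crucially it allows the errors to be $o_{T\to\infty}(T\log^3_+ T + \tilde E_T(t))$, i.e.\ absorbable into the main term $-4\tilde E_T$; your claim that each cross summand carries a factor $\tfrac{1}{\xi_k-\xi_{k'}}-\tfrac{1}{x_k-x_{k'}} = O(\log_+ T)/(\xi_k-\xi_{k'})^2$ fails for adjacent pairs near the boundary (Proposition \ref{gap} permits gaps as small as $\exp(-O(\log^2 j\log\log j))$), and this is precisely the kind of term one can only control relative to the energy itself.

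The decisive gap, however, is the one you yourself flag as the main obstacle: the initial value of the Hamiltonian. Your scheme bounds $\tilde{\mathcal H}^K(t_*)$ by splitting at a single length scale $L$, using Corollary \ref{macro} for $|k-j|\ge L$ (giving $\ll T\log^5_+ T/L$) and the linear comparison $\tilde H_{jk}\ll(\xi_k-\xi_j)^2\tilde E_{jk}\ll (L/\log T)^2\tilde E_{jk}$ for $|k-j|<L$, with $\tilde E^K(t_*)\ll B$ obtained by pigeonholing from the current integrated-energy bound $B$. One pass therefore yields, after optimizing $L\asymp(T\log^7 T/B)^{1/3}$, a new bound $B'\ll T^{2/3}\log^{8/3}T\,B^{1/3}$. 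This map has fixed point $B\asymp T\log^4 T$, so even granting unlimited iterations (and ignoring that each time-pigeonhole must be performed on a stretch of time where only the \emph{old} bound is available, so the iterations must nest in shrinking time windows), your scheme stalls a full factor of $\log T$ above the needed $o(T\log^3_+ T)$; equivalently, the linear differential inequality it produces has relaxation rate $\asymp \log^2 T/L^2$, and making $L$ small enough to relax by a factor $\log^2 T$ within time $|\Lambda|/4$ pushes the long-range threshold $T\log^5 T/L$ back above $T\log^3 T$. The missing idea is the paper's Lemma \ref{lame}: a level-set (multi-scale) comparison showing that if $\tilde{\mathcal H}_T(t)\ge \delta m T\log^3_+ T$ then the pairs with gap ratio $\ge 2^{-m}$ contribute only $O(\delta^2 m T\log^3 T)$ (this uses $L(s)\ll m+|s|$ together with the telescoping bound \eqref{clam} on $\sum_{j,k}|x_j-x_k|/|j-k|$, an input absent from your argument), so the excess Hamiltonian is carried by gap ratios $\le 2^{-m}$, where $\tilde H_{jk}\ll m2^{-2m}\tilde E_{jk}$, giving $\tilde E_T\gg\delta 2^{2m}T\log^3 T$. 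This \emph{superlinear} energy lower bound makes the gradient flow relax from an arbitrarily large (merely finite) initial Hamiltonian to $O(\delta T\log^3 T)$ within time $O(2^{-2m_0})\ll|\Lambda|/4$ (Corollary \ref{core}), so no quantitative initial-time bound on $\tilde{\mathcal H}$ is needed at all; the restriction to $[\Lambda/4,0]$ exists to leave room for this deterministic relaxation on $[\Lambda/2,\Lambda/4]$, not for a time-pigeonhole. Without this (or some other mechanism giving a super-linear Hamiltonian--energy inequality), your iteration cannot reach the stated bound.
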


The remainder of this section is devoted to a proof of Theorem \ref{prelim}.
The claim is trivial for $T$ in any compact region $T=O(1)$, so we may assume without loss of generality that $T$ is large.  Recall the notation $X \lessapprox Y$ or $X = \tilde O(Y)$ for $X \ll Y \log^{O(1)} T$ introduced in the notation section of the paper; this will be convenient to use in the argument that follows. (Typically, when we use this notation, we will also have some sort of power gain $T^{-c}$ that will safely absorb all the $\log^{O(1)} T$ factors.)  Let $\psi_T: \Z^* \to \R^+$ be the weight function
\begin{equation}\label{psit-def}
 \psi_T(j) := \left(1 + \frac{|j|}{T \log T}\right)^{-100}.
\end{equation}
This is a smooth positive weight that is mostly localised to the region $j = O(T \log T)$ and fairly rapidly decaying away from this region.

We introduce the smoothly truncated renormalized energy
\begin{equation}\label{eform}
\tilde E_T(t) \coloneqq \sum_{j,k \in \Z^*:\, j \neq k} \psi_T(j) \psi_T(k) \tilde E_{jk}(t)
\end{equation}
for $\Lambda/2 \leq t \leq 0$.
This is clearly non-negative, and from Proposition \ref{energy-weak}, \eqref{psit-def}, \eqref{vlog}, and Fubini's theorem we see that $\tilde E_T$ is absolutely integrable in time (in particular, it is finite for almost every $\Lambda/2 \leq t \leq 0$).  Since the $E_{jk}$ are non-negative, we see from \eqref{psit-def} that to prove \eqref{prelim-2} it will suffice to show that
\begin{equation}\label{prelim-3}
\int_{\Lambda/4}^0 \tilde E_T(t)\, dt = o_{T \to \infty}( T \log^3_+ T ).
\end{equation}

We have an analogue of Lemma \ref{eb}:

\begin{lemma}\label{eb2}  For almost every $\Lambda/2 \leq t \leq 0$, one has
$$
\tilde E_T(t) = \left(\sum_{j,k \in \Z^*:\, j \neq k} \psi_T(j) \psi_T(k) \left(E_{jk}(t) - \frac{1}{|\xi_k-\xi_j|^2}\right)\right) + \tilde O(1).$$
\end{lemma}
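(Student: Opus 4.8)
The plan is to mirror the proof of Lemma~\ref{eb}, but now carrying the weights $\psi_T(j)\psi_T(k)$ through the computation. Starting from the definition \eqref{tejk} of $\tilde E_{jk}$, multiply by $\psi_T(j)\psi_T(k)$ and sum over distinct $j,k \in \Z^*$. The terms $E_{jk}(t)$ and $-\frac{1}{|\xi_k-\xi_j|^2}$ reproduce the main term on the right-hand side, so it suffices to show that the contribution of the remaining piece,
$$
S(t) \coloneqq 2\sum_{j,k \in \Z^*: j\neq k} \psi_T(j)\psi_T(k)\,\frac{(x_k(t)-\xi_k)-(x_j(t)-\xi_j)}{(\xi_k-\xi_j)^3},
$$
is $\tilde O(1)$ for almost every $t$. (We must first justify absolute convergence so that the rearrangement is legitimate: using \eqref{xji0}, \eqref{xidif}, the super-polynomial decay of $\psi_T$, and the fact that $\tilde E_T(t)$ is finite for a.e.\ $t$ by Proposition~\ref{energy-weak}, the triple sum converges absolutely for a.e.\ $t$, and this is the set on which we work.)

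Next I would desymmetrise $S(t)$ as in Lemma~\ref{eb}, writing
$$
S(t) = 2\sum_{j \in \Z^*} \psi_T(j)\,(x_j(t)-\xi_j)\, \sigma_j,
\qquad
\sigma_j \coloneqq \sum_{k \in \Z^*: k\neq j} \frac{\psi_T(k)}{(\xi_k-\xi_j)^3}.
$$
By \eqref{xji} we have $x_j(t)-\xi_j = O(\log_+\xi_j) = \tilde O(1)$ uniformly on $\Lambda/2 \le t \le 0$, so it remains to bound the weighted sum $\sum_{j\in\Z^*}\psi_T(j)|\sigma_j|$. I would then estimate $\sigma_j$ essentially as in the proof of \eqref{ji}: split the inner sum at $|k-j| \asymp \tfrac12 |j|$, handle the far range $|k-j| \ge \tfrac12|j|$ by \eqref{xidif} (getting $\tilde O(|j|^{-2})$ times a summable weight), and in the near range $|k-j| < \tfrac12|j|$ use \eqref{add-2} to replace $\frac{1}{(\xi_k-\xi_j)^3}$ by $\frac{\log^3\xi_j}{(4\pi)^3}\frac{1}{(k-j)^3}$ with error $\tilde O\!\left(\frac{1}{|j|(k-j)^2}\right)$. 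The new feature compared with Lemma~\ref{eb} is the presence of the extra factor $\psi_T(k)$ inside the inner sum: since $\psi_T$ varies slowly on the scale of $|k-j| < \tfrac12|j|$ (indeed $\psi_T(k) = \psi_T(j)(1+O(|k-j|/(T\log T)))$ and also $\psi_T(k)=\psi_T(j)(1+O(1))$ trivially), the leading odd sum $\sum_{0<|k-j|<|j|/2} \frac{\psi_T(j)}{(k-j)^3}$ telescopes with massive cancellation down to $\tilde O(|j|^{-2}\psi_T(j))$, while the correction from the variation of $\psi_T$ contributes $\tilde O\!\left(\frac{\psi_T(j)}{T\log T}\sum \frac{1}{(k-j)^2}\right) = \tilde O(\psi_T(j)/T)$, which is even smaller. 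Summing the resulting bound $|\sigma_j| = \tilde O(\psi_T(j)/|j|^2 + \psi_T(j)/|j|)$ — actually $|\sigma_j| = \tilde O(\psi_T(j)/|j|)$ after also accounting for edge effects, which here are harmless because $\psi_T$ has no sharp cutoff — against $\psi_T(j)|x_j(t)-\xi_j| = \tilde O(\psi_T(j))$ and using $\sum_j \psi_T(j)^2/|j| = \tilde O(1)$ gives the claim.

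The main obstacle, and the one deserving most care, is precisely the handling of the slowly varying weight $\psi_T$ inside the inner sum: in Lemma~\ref{eb} the cancellation of the odd sum $\sum \frac{1}{k-j}$-type expressions was immediate from oddness, but with $\psi_T(k)$ present one must quantify that $\psi_T$ is nearly constant on the relevant scale and control the resulting defect. This is where the factor $(T\log T)^{-1}$ in the logarithmic derivative of $\psi_T$ is exploited: it guarantees that the weight-variation correction is lower order, so that no loss of powers of $T$ occurs and the final bound is genuinely $\tilde O(1)$ rather than, say, $\tilde O(T^{c})$. A secondary (routine) point is the a.e.-in-$t$ qualifier: all estimates above are uniform in $t \in [\Lambda/2,0]$, so the only reason for "almost every" is to ensure the sums converge absolutely, which holds off a measure-zero set by Proposition~\ref{energy-weak} and Fubini as already noted.
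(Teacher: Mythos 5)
Your proposal is correct and follows essentially the same route as the paper's proof: the same reduction via \eqref{tejk} and a.e.\ absolute convergence from Proposition \ref{energy-weak} and Fubini, the same desymmetrization, and the same near/far splitting using \eqref{xidif}, \eqref{add-2}, the slow variation $\psi_T(k)=\psi_T(j)+\tilde O(|k-j|/T)$, and the oddness of $k \mapsto \frac{1}{k-j}$. The only (harmless) cosmetic difference is that you attach a factor $\psi_T(j)$ to the far-range contribution, where $\psi_T(k)$ need not be comparable to $\psi_T(j)$, but the paper's uniform bound $\lessapprox \frac{1}{|j|}+\frac{1}{T}$ for the inner sum suffices equally well in the final summation over $j$.
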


\begin{proof}  For almost every $t$, one sees from Proposition \ref{energy-weak}, \eqref{psit-def}, and Fubini's theorem (and \eqref{xji0}) that the series
$$ \sum_{j,k:\, j \neq k} \psi_T(j) \psi_T(k) \left( E_{jk}(t)  + \frac{1}{|\xi_k-\xi_j|^2} + \frac{|x_j(t)| + |x_k(t)|}{|\xi_k-\xi_j|^3} \right)$$
is absolutely convergent.  Thus, by Fubini's theorem and \eqref{tejk}, it will suffice to show that
$$ \sum_{j,k \in \Z^*:\, j \neq k} \psi_T(j) \psi_T(k) \frac{(x_k(t)-\xi_k)-(x_j(t)-\xi_j)}{(\xi_k-\xi_j)^3} \lessapprox 1.$$
We may desymmetrize the left-hand side (again using Fubini's theorem) as
$$ 2 \sum_{j \in \Z^*} \psi_T(j) (x_j(t)-\xi_j) \sum_{k \in \Z^*:\, k \neq j} \frac{\psi_T(k)}{(\xi_k - \xi_j)^3},$$
and so it will suffice to establish the bound
$$
\sum_{k \in \Z^*:\, k \neq j} \frac{\psi_T(k)}{(\xi_k - \xi_j)^3} \lessapprox \frac{1}{|j|} + \frac{1}{T} $$
for all $j \in \Z^*$.

As in the proof of Lemma \ref{eb}, we see from \eqref{xidif} that the contribution of those $k$ with $|k-j| \geq \frac{1}{2} j$ is acceptable.  For the remaining range $|k-j| < \frac{1}{2} j$, we again use \eqref{add-2} to estimate
$$
\frac{1}{(\xi_k - \xi_j)^3} = \frac{\log^3 \xi_j}{(4\pi)^3} \frac{1}{(k-j)^3} + \tilde O \left( \frac{1}{j (k-j)^2} \right) $$
and similarly
$$ \psi_T(k) = \psi_T(j) + \tilde O \left( \frac{|k-j|}{T} \right),$$
and the claim follows by direct computation using the fact that $k \mapsto \frac{1}{k-j}$ is odd around $j$.
\end{proof}

Recall that two indices $j,k \in \Z^*$ are said to be \emph{nearby}, and we write $j \sim_T k$, if one has
$$ 0 < |j-k| < (T^2 + |j| + |k|)^{0.1}.$$
This is clearly a symmetric relation.  

Next, for $\Lambda/2 \leq t \leq 0$, we define the smoothly truncated renormalized Hamiltonian
\begin{equation}\label{hform}
\tilde {\mathcal H}_T(t) \coloneqq \sum_{j,k \in \Z^*:\, j \sim_T k} \psi_T(j) \psi_T(k) \left(H_{jk}(t) - \log \frac{1}{|\xi_j-\xi_k|}\right).
\end{equation}
From \eqref{psit-def}, Proposition \ref{gap}, and \eqref{add-2} we see that the sum here is absolutely convergent for every $\Lambda/2 \leq t \leq 0$.  We can also express it in terms of non-negative quantities plus a small error, in a manner similar to Lemma \ref{eb2}, as follows.  We first introduce the renormalization
$$ L(x) := \log \frac{1}{|x|} + |x| - 1$$
of the logarithm function $x \mapsto \log \frac{1}{|x|}$;
this is a convex nonnegative function on $\R \backslash \{0\}$ that vanishes precisely when $|x|=1$, and obeys the asymptotics
\begin{equation}\label{llog}
\begin{split}
L(x) &\asymp \log_+ \frac{1}{|x|} \quad \hbox{ for } 0 < |x| \leq 1/2 \\
L(x) &\asymp (|x|-1)^2 \quad \hbox{ for } 1/2 < |x| \leq 2 \\
L(x) &\asymp |x|  \quad \hbox{ for } |x| > 2.
\end{split}
\end{equation}
For any $\Lambda/2 \leq t \leq 0$ and distinct $j,k \in \Z^*$, we define the normalization
\begin{equation}\label{thkj-def}
 \tilde H_{jk}(t) \coloneqq L\left( \frac{x_j(t)-x_k(t)}{\xi_j-\xi_k} \right) 
\end{equation}
of the Hamiltonian interaction $H_{jk}(t)$; this is symmetric in $j,k$ and non-negative, vanishing precisely when $x_k(t)-x_j(t) = \xi_k-\xi_j$. 

\begin{lemma}\label{hamil-form}  For every $\Lambda/2 \leq t \leq 0$, one has
$$
\tilde {\mathcal H}_T(t) = \sum_{j,k \in \Z^*:\, j \sim_T k} \psi_T(j) \psi_T(k) \tilde H_{jk}(t) + o_{T \to \infty}(T \log^3_+ T).$$
\end{lemma}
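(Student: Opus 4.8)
We begin by reducing the claimed identity to a cancellation estimate. For distinct $j,k \in \Z^*$ set $r_{jk}(t) \coloneqq \tfrac{x_j(t)-x_k(t)}{\xi_j-\xi_k}$, which is positive since both $x_\bullet(t)$ and $\xi_\bullet$ are increasing in the index. Then, from \eqref{formal_e}, \eqref{thkj-def} and the definition $L(x)=\log\tfrac{1}{|x|}+|x|-1$,
$$ \Bigl(H_{jk}(t)-\log\tfrac{1}{|\xi_j-\xi_k|}\Bigr)-\tilde H_{jk}(t) \;=\; \log\tfrac{1}{r_{jk}(t)}-L(r_{jk}(t)) \;=\; 1-r_{jk}(t) \;=\; \frac{(x_k(t)-\xi_k)-(x_j(t)-\xi_j)}{\xi_j-\xi_k}. $$
Recalling \eqref{hform}, the quantity to be estimated is therefore
$$ \mathcal{D}_T(t) \;\coloneqq\; \sum_{j,k\in\Z^*:\,j\sim_T k}\psi_T(j)\psi_T(k)\,\frac{(x_k(t)-\xi_k)-(x_j(t)-\xi_j)}{\xi_j-\xi_k}. $$
Using \eqref{xji} to bound $|x_j(t)-\xi_j|\ll\log_+\xi_j$, \eqref{xidif} to bound $|\xi_j-\xi_k|^{-1}\ll\log_+(|j|+|k|)/|j-k|$, and the rapid decay \eqref{psit-def} of $\psi_T$, I would first verify that this double sum converges absolutely, with the crude bound $\mathcal{D}_T(t)\lessapprox T$. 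Absolute convergence also shows that $\sum_{j\sim_T k}\psi_T(j)\psi_T(k)\tilde H_{jk}(t)$ converges absolutely (it is $\tilde {\mathcal H}_T(t)$ minus the absolutely convergent series just bounded) and legitimizes the rearrangement in the next step.

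Next I would desymmetrize. Splitting $\mathcal{D}_T(t)$ into the two sums coming from $x_k(t)-\xi_k$ and from $x_j(t)-\xi_j$, relabelling $j\leftrightarrow k$ in the second, and using that $\sim_T$ and $\psi_T(j)\psi_T(k)$ are symmetric while $(\xi_j-\xi_k)^{-1}$ is antisymmetric, one obtains
$$ \mathcal{D}_T(t) \;=\; 2\sum_{k\in\Z^*}\psi_T(k)\,(x_k(t)-\xi_k)\sum_{j\in\Z^*:\,j\sim_T k}\frac{\psi_T(j)}{\xi_j-\xi_k}. $$
Since $|x_k(t)-\xi_k|\ll\log_+\xi_k\ll\log_+(2+|k|)$ by \eqref{xji}, it suffices to estimate the inner sum. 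The decisive point is that $j\mapsto (\xi_j-\xi_k)^{-1}$ is nearly odd about $j=k$ while $\psi_T$ and the leading term of $(\xi_j-\xi_k)^{-1}$ vary slowly, and the set $\{j:j\sim_T k\}$ is nearly symmetric about $k$: indeed $j=k\pm m$ lies in it precisely when $m<(T^2+|k\pm m|+|k|)^{0.1}$, and the two thresholds for $+m$ and $-m$ differ by $O(1)$, so only $O(1)$ offsets $m$, each of size $\asymp(T^2+|k|)^{0.1}$, break the symmetry. When $|k|\gtrsim T^{0.2}$ (so the offsets $|j-k|\le(T^2+|j|+|k|)^{0.1}$ are at most $|k|/2$ and $|j|\asymp|k|$) I would use \eqref{add-2} to write $\tfrac{1}{\xi_j-\xi_k}=\tfrac{\log\xi_k}{4\pi(j-k)}+O(1/|k|)$ together with $|\psi_T(j)-\psi_T(k)|\ll\psi_T(k)\tfrac{|j-k|}{T\log T+|k|}$ from \eqref{psit-def}; the odd main term $\tfrac{\psi_T(k)\log\xi_k}{4\pi}\sum_m\tfrac1m$ then cancels up to the $O(1)$ boundary offsets, and every remaining contribution is dominated by $\psi_T(k)\log^{O(1)}(2+|k|)\,(T^2+|k|)^{-0.1}$, giving
$$ \Bigl|\sum_{j\sim_T k}\frac{\psi_T(j)}{\xi_j-\xi_k}\Bigr| \;\ll\; \psi_T(k)\,\frac{\log^{O(1)}(2+|k|)}{(T^2+|k|)^{0.1}}. $$
For the remaining range $|k|\lesssim T^{0.2}$ I would use only \eqref{xidif} to get the crude bound $\bigl|\sum_{j\sim_T k}\psi_T(j)/(\xi_j-\xi_k)\bigr|\ll\sum_{0<|m|\le(T^2+|k|)^{0.1}+1}\log_+(|j|+|k|)/|m|\ll\log^{O(1)}T$.

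Finally I would sum over $k$. The range $|k|\lesssim T^{0.2}$ contributes $\ll\sum_{|k|\lesssim T^{0.2}}\psi_T(k)\log^{O(1)}T\ll T^{0.2}\log^{O(1)}T$, and the range $|k|\gtrsim T^{0.2}$ contributes
$$ \ll (T^2)^{-0.1}\sum_{k\in\Z^*}\psi_T(k)^2\log^{O(1)}(2+|k|) \;\ll\; T^{-0.2}\sum_{k\in\Z^*}\psi_T(k)\log^{O(1)}(2+|k|) \;\ll\; T^{0.8}\log^{O(1)}T $$
by \eqref{psit-def}. Hence $\mathcal{D}_T(t)\ll T^{0.8}\log^{O(1)}T=o_{T\to\infty}(T\log^3_+T)$ uniformly for $t\in[\Lambda/2,0]$, which is the assertion. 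The main obstacle is precisely the last part of the second step: because the threshold $(T^2+|j|+|k|)^{0.1}$ defining $\sim_T$ depends on $j$, the set $\{j:j\sim_T k\}$ is only approximately symmetric about $k$, and one must check that this asymmetry — together with the non-linearity of $j\mapsto\xi_j$ (handled via \eqref{add-2}) and the variation of $\psi_T$ — produces an error with a genuine power gain in $T$; it is exactly this gain that upgrades the easy bound $\mathcal{D}_T(t)\lessapprox T$ to the required $o(T\log^3_+T)$.
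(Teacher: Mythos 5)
Your proposal follows the paper's proof essentially step for step: the reduction of $\tilde {\mathcal H}_T(t)-\sum_{j\sim_T k}\psi_T(j)\psi_T(k)\tilde H_{jk}(t)$ to the linear term $\frac{(x_k(t)-\xi_k)-(x_j(t)-\xi_j)}{\xi_j-\xi_k}$, the desymmetrization, the crude $\log^{O(1)}T$ bound on the inner sum for small indices, and the cancellation in $\sum_{j\sim_T k}\psi_T(j)/(\xi_j-\xi_k)$ coming from the odd main term $\frac{\log \xi_k}{4\pi(j-k)}$ of \eqref{add-2}, the near-symmetry of $\{j: j\sim_T k\}$ about $k$, and the slow variation of $\psi_T$ are exactly the ingredients of the paper's argument. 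The one quantitative slip is in your claimed inner-sum bound for $|k|\gtrsim T^{0.2}$: the $O(1/|k|)$ error per summand from \eqref{add-2} accumulates over the $\asymp (T^2+|k|)^{0.1}$ admissible offsets to a term of size $\psi_T(k)(T^2+|k|)^{0.1}/|k|$, which exceeds your stated bound $\psi_T(k)\log^{O(1)}(2+|k|)\,(T^2+|k|)^{-0.1}$ throughout the window $T^{0.2}\lesssim |k|\lesssim T^{0.4}$; so the displayed inner-sum estimate is false as stated there. This is harmless for the conclusion: either weaken the claim to include the extra term $\psi_T(k)(T^2+|k|)^{0.1}/|k|$ (whose total contribution to the $k$-sum is still $O(T^{0.2}\log^{O(1)}T)$), or simply raise the threshold of the crude regime from $T^{0.2}$ to, say, $T^{0.5}$ --- which is in effect what the paper does, proving only the weaker (and sufficient) cancellation $\sum_{j\sim_T k}\psi_T(j)/(\xi_j-\xi_k)=o_{T\to\infty}(\log T)$ in the range $T^{0.5}\le |k|\le T^{1.5}$ and using the crude bound elsewhere, rather than the power saving you aim for.
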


\begin{proof} From \eqref{thkj-def} one has
$$ \tilde H_{jk}(t) = H_{jk}(t) - \log \frac{1}{|\xi_j-\xi_k|} - \frac{(x_j(t)-\xi_j) - (x_k(t)-\xi_k)}{\xi_j-\xi_k} $$
so by \eqref{hform} it suffices to show that
$$ \sum_{j,k \in \Z^*:\, j \sim_T k} \psi_T(j) \psi_T(k) \frac{(x_j(t)-\xi_j) - (x_k(t)-\xi_k)}{\xi_j-\xi_k} = o_{T \to \infty}(T \log^3_+ T).$$
Note from \eqref{xia}, \eqref{xji0}, \eqref{add-2}, \eqref{psit-def} that the sum here is absolutely convergent.  Desymmetrizing, it suffices to show that
$$ \sum_{j \in \Z^*} \psi_T(j) |x_j(t) - \xi_j| \left| \sum_{k \in \Z^*: j \sim_T k} \frac{\psi_T(k)}{\xi_j-\xi_k} \right| = o_{T \to \infty}(T \log^3 T).$$
The inner sum can be crudely bounded by $\tilde O(1)$ for all $j$ thanks to \eqref{psit-def}, \eqref{add-2}.  By \eqref{psit-def}, \eqref{xji}, \eqref{xia}, it thus suffices to show that
\begin{equation}\label{souse}
 \sum_{k \in \Z^*:\, j \sim_T k} \frac{\psi_T(k)}{\xi_j-\xi_k} = o_{T \to \infty}(\log T)
\end{equation}
whenever $T^{0.5} \leq |j| \leq T^{1.5}$ (say).  For $j \sim_T k$, one has $\psi_T(k) = \psi_T(j) + \tilde O( T^{-0.8} )$, and the contribution of the error term is acceptable by \eqref{add-2}, so it suffices to show that
\begin{equation}\label{xikjd}
 \sum_{k \in \Z^*:\, j \sim_T k} \frac{1}{\xi_j-\xi_k} = o_{T \to \infty}(\log T)
\end{equation}
whenever $|j| \geq T^{0.5}$. But from \eqref{add-2} we have
$$ \xi_j - \xi_k = \frac{4\pi}{\log \xi_j} (j-k) + O\left( \frac{|j-k|^2}{|j| \log^2_+ \xi_j} \right ) $$
and hence
\begin{equation}\label{stat}
 \frac{1}{\xi_j - \xi_k} = \frac{\log \xi_j}{4\pi} \frac{1}{j-k} + O\left( \frac{1}{|j|} \right ).
\end{equation}
As $k \mapsto \frac{\log \xi_j}{4\pi} \frac{1}{j-k}$ is odd around $j$, and the set $\{k: j \sim_T k\}$ is very nearly symmetric around $j$, it is then easy to establish
\eqref{xikjd} as required.
\end{proof}

In contrast to the non-normalized interaction $H_{jk}(t)$, the quantity $\tilde H_{jk}(t)$ is well controlled when $k$ and $j$ are far apart:

\begin{lemma}[Long-range decay of $\tilde H_{jk}$]\label{lrdec}  Let $j,k$ be distinct elements of $\Z^*$, and let $t$ be in the range $\Lambda/2 \leq t \leq 0$.
There exists a quantity $\eps(j)$ that goes to zero as $|j| \to \infty$, such that if $|k-j|  \geq \eps(j)^{-1} \log^2_+ \xi_j$, then
$$ \tilde H_{jk}(t) \ll \frac{\log^4_+(|j|+|k|) }{|k-j|^2},$$
and if $\eps(j) \log^2_+ \xi_j \leq |k-j| \leq \eps(j)^{-1} \log^2_+ \xi_j$, one has the refinement
$$ \tilde H_{jk}(t) \ll \eps(j)^2 \frac{\log^4_+ j }{|k-j|^2}.$$
Finally, in the remaining region $|k-j| < \eps(j) \log^2_+ \xi_j$, one has the crude bound
$$ \tilde H_{jk}(t) \ll (\log^2_+ j) \log_+ \log_+ j.$$
\end{lemma}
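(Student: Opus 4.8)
The plan is to work with the ratio $R_{jk}(t)\coloneqq (x_j(t)-x_k(t))/(\xi_j-\xi_k)$, which is positive (both the zeroes $x_\bullet(t)$ and the classical locations $\xi_\bullet$ are odd and strictly increasing in the index), so that $\tilde H_{jk}(t)=L(R_{jk}(t))$. By the symmetry $j\mapsto-j$ we may assume $j\geq1$. For any \emph{fixed} pair $j\neq k$, the function $t\mapsto R_{jk}(t)$ is continuous on the compact interval $[\Lambda/2,0]$ and stays in a compact subset of $\R\setminus\{0\}$ (the $x_j(t),x_k(t)$ being distinct and the $\xi_j,\xi_k$ fixed and distinct), so all three asserted bounds hold trivially when $|j|+|k|=O(1)$ after enlarging the implied constants; hence I would assume $|j|+|k|$ is large, and freely put $\eps(j)\coloneqq1$ on the bounded range of $j$.

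The core of the argument is three complementary estimates for $R_{jk}(t)-1$. (a) For all distinct $j,k$, the macroscopic bounds \eqref{xji}, \eqref{xidif}, \eqref{xia} give the crude inequality $|R_{jk}(t)-1|=|(x_j(t)-\xi_j)-(x_k(t)-\xi_k)|/|\xi_j-\xi_k|\ll\log_+^2(|j|+|k|)/|k-j|$. (b) When $1\leq j\asymp k$ I would \emph{chain} \eqref{add}: since \eqref{add} is only valid on spatial scale $|k-j|\lesssim\log_+^2\xi_j$ (and is uniform in $t\in(\Lambda,0]$), I insert intermediate indices $j=j_0<\dots<j_m=k$ with consecutive gaps $\asymp\log_+^2\xi_j$, so $m\asymp|k-j|/\log_+^2\xi_j$, sum \eqref{add} across the chain, and compare the resulting main term with \eqref{add-2}; this yields $(x_k(t)-x_j(t))-(\xi_k-\xi_j)\ll\eta^*(j)\,|k-j|/\log_+\xi_j+|k-j|^2/(j\log_+^2\xi_j)$, where $\eta^*(j)\to0$ is a non-increasing majorant of the $o$-rate in \eqref{add}; dividing by $|\xi_k-\xi_j|\asymp|k-j|/\log_+\xi_j$ gives $|R_{jk}(t)-1|\ll\eta^*(j)+|k-j|/(j\log_+\xi_j)$. (c) From the identity $\tilde H_{jk}(t)=H_{jk}(t)+\log|\xi_j-\xi_k|+(R_{jk}(t)-1)$ together with the crude bounds $R_{jk}(t)\ll\log_+^2 j$ and $|\log|\xi_j-\xi_k||\ll\log_+ j$ (both from \eqref{xji}, \eqref{xidif} when $j\asymp k$), Proposition \ref{gap} gives $\tilde H_{jk}(t)\ll(\log_+^2 j)\log_+\log_+ j$ for all $1\leq j\asymp k$.

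I would then take $\eps(j)\coloneqq\max\bigl(\eta^*(j)^{1/4},(\log_+\xi_j/j)^{1/6}\bigr)$ for large $j$; then $\eps(j)\to0$ and $\eps(j)^{-1}\log_+^2\xi_j\leq j/100$, so in the regimes $|k-j|<\eps(j)\log_+^2\xi_j$ and $\eps(j)\log_+^2\xi_j\leq|k-j|\leq\eps(j)^{-1}\log_+^2\xi_j$ one automatically has $j\asymp k$. In the crude region $|k-j|<\eps(j)\log_+^2\xi_j$, estimate (c) is exactly the claim. In the refined region $\eps(j)\log_+^2\xi_j\leq|k-j|\leq\eps(j)^{-1}\log_+^2\xi_j$, estimate (b) and the choice of $\eps(j)$ force $R_{jk}(t)\to1$, whence $L(R_{jk})\asymp(R_{jk}-1)^2$ by \eqref{llog}; squaring (b) and using $|k-j|\leq\eps(j)^{-1}\log_+^2\xi_j$ together with $\log_+\xi_j\asymp\log_+ j$ to absorb the powers of $\eps(j)$ yields $\tilde H_{jk}(t)\ll\eps(j)^2\log_+^4 j/|k-j|^2$. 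In the long-range region $|k-j|\geq\eps(j)^{-1}\log_+^2\xi_j$ I would split on whether $|k-j|\leq j/100$: if so, (a) gives $|R_{jk}-1|\ll\eps(j)\to0$; if not, then $|j|+|k|\asymp|k-j|$ is large and (a) gives $|R_{jk}-1|\ll\log_+^2(|j|+|k|)/(|j|+|k|)\to0$; either way $L(R_{jk})\asymp(R_{jk}-1)^2\ll\log_+^4(|j|+|k|)/|k-j|^2$.

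The main obstacle will be the chaining estimate (b). Because \eqref{add} holds only up to scale $\log_+^2\xi_j$, one must iterate it of order $\eps(j)^{-1}$ times to cover the refined region, and the delicate point is checking that the accumulation over $m\asymp|k-j|/\log_+^2\xi_j$ steps of the per-step error $O(\eta^*(j)\log_+\xi_j)$, plus the accumulated discrepancy between the main terms of \eqref{add} and \eqref{add-2}, still tends to $0$ after dividing by $|\xi_k-\xi_j|$ and remains (after squaring) below $\eps(j)^2\log_+^4 j/|k-j|^2$; it is precisely this requirement that dictates the admissible rate at which $\eps(j)$ may decay. The rest is routine bookkeeping with the $L$-asymptotics \eqref{llog} and the spacing estimates \eqref{xia}, \eqref{xidif}, \eqref{add-2}, together with tracking the order in which the various absolute constants are chosen so the compactness reduction is legitimate.
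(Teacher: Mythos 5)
Your proposal is correct and follows essentially the same route as the paper: a case split on $|k-j|$ relative to $\eps(j)\log_+^2\xi_j$, $\eps(j)^{-1}\log_+^2\xi_j$ and $\asymp |j|$, using \eqref{xji} and \eqref{xidif} for the long-range bound, iterating (chaining) \eqref{add} together with \eqref{add-2} in the refined middle range, Proposition \ref{gap} in the crude short range, and the asymptotics \eqref{llog} of $L$ throughout. Your explicit choice $\eps(j)=\max\bigl(\eta^*(j)^{1/4},(\log_+\xi_j/j)^{1/6}\bigr)$ and the bookkeeping of the accumulated chaining error are just a spelled-out version of the paper's ``\eqref{add} iterated $O(\eps(j)^{-1})$ times, with $\eps(j)$ going to zero sufficiently slowly,'' and the verification goes through as you indicate.
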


\begin{proof}  First suppose that $|k-j| \geq \frac{1}{2} |j|$ (so in particular $|k-j| \asymp |j|+|k|$).  From \eqref{xji} one has
$$ x_k(t) - x_j(t) = \xi_k - \xi_j + O( \log_+(|j|+|k|) )$$
while from \eqref{xidif} one has
$$ |\xi_k - \xi_j| \gg \frac{|j|+|k|}{\log_+(|j|+|k|)} $$
and thus
$$ \frac{x_k(t)-x_j(t)}{\xi_k-\xi_j} - 1 \ll \frac{\log_+^2(|j|+|k|)}{|j|+|k|},$$
and the claim then follows from \eqref{llog} (noting that the case $|j|+|k| = O(1)$ can be treated by compactness).

Now suppose that $\eps(j)^{-1} \log^2_+ \xi_j \leq |k-j| < \frac{1}{2} |j|$.  By symmetry we can take $j$ positive; we may also assume $j$ large, as the bounded case $j=O(1)$ may be treated by compactness.  From \eqref{xji} one then has
$$ x_k(t) - x_j(t) = \xi_k - \xi_j + O( \log j )$$
and from \eqref{xidif} and one has
\begin{equation}\label{xikj}
|\xi_k - \xi_j| \asymp \frac{|k-j|}{\log j} 
\end{equation}
and hence
$$ \frac{x_k(t)-x_j(t)}{\xi_k-\xi_j} - 1 \ll \frac{\log^2(j)}{|k-j|} \leq \eps(j).$$
The claim then follows from \eqref{llog}.

Next, suppose that $\eps(j) \log^2_+ \xi_j \leq |k-j| \leq \eps(j)^{-1} \log^2_+ \xi_j$.  In this case, from \eqref{add} (iterated $O(\eps(j)^{-1})$ times) and \eqref{add-2} we have
$$ x_k(t) - x_j(t) = \xi_k - \xi_j + o_{j \to \infty}( \eps(j)^{-1} \log j )$$
while from \eqref{xidif} we continue to have \eqref{xikj}, and hence
$$ \frac{x_k(t)-x_j(t)}{\xi_k-\xi_j} - 1 = o_{j \to \infty}\left( \eps(j)^{-1} \frac{\log^2(j)}{|k-j|} \right),$$
with the decay rate in the $o_{j \to \infty}$ notation independent of the choice of function $\eps()$.  For $\eps(j)$ going to zero sufficiently slowly, the claim once again follows from \eqref{llog}.

Finally, for the remaining case $|k-j| < \eps(j) \log^2_+ \xi_j$ (which implies $x_k(t) - x_j(t) \ll \log^2_+ j$ thanks to \eqref{add}) the claim follows from Proposition \ref{gap} and \eqref{llog}.
\end{proof}

We call a (time-dependent) quantity \emph{moderately sized} if it is of the form $O( T \log^3_+ T + \tilde E_T(t) )$, and \emph{negligible} if it is of the form $o_{T \to \infty}(  T\log^3_+ T + \tilde E_T(t) )$.  The following lemma gives some examples of moderately sized and negligible quantities:

\begin{lemma}\label{neg}  Let $t$ be in the range $\Lambda/2 \leq t \leq 0$.
\begin{itemize}
\item[(i)]  The quantity
$$ \sum_{j,k \in \Z^*:\, j \neq k} \frac{\psi_T(j) \psi_T(k) }{|x_j(t)-x_k(t)|^2}$$
is moderately sized.
\item[(ii)]  The quantity
$$ (\log_+ T) \sum_{j,k \in \Z^*:\, j \neq k} \frac{\psi_T(j) \psi_T(k) }{|x_j(t)-x_k(t)|}$$
is moderately sized.
\item[(iii)]  For any absolute constants $C, c>0$, the expression
$$ (\log^C_+ T) \sum_{j,k \in \Z^*:\, |j|, |k| \leq T^{1-c}} \frac{\psi_T(j) \psi_T(k) }{|x_j(t)-x_k(t)|}$$
is negligible.
\item[(iv)]  For any absolute constants $C, c>0$, the expression
$$ (\log^C_+ T) \sum_{j,k \in \Z^*:\, |j|, |k| \geq T^{1+c}} \frac{\psi_T(j) \psi_T(k)}{|x_j(t)-x_k(t)|}$$
is negligible.
\end{itemize}
Similarly if the $x_i(t)$ are replaced by $\xi_i$ throughout.
\end{lemma}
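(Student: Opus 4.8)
The strategy is to separate each sum into an ``equilibrium'' part, where every gap $x_j(t)-x_k(t)$ is replaced by $\xi_k-\xi_j$, plus a ``fluctuation'' part dominated by the renormalized energy $\tilde E_T(t)$; the equilibrium parts are exactly the ``$\xi_i$'' versions and are estimated by direct computation, while the fluctuation parts are handled by Cauchy--Schwarz against $\tilde E_T(t)$. Part (i) is essentially Lemma \ref{eb2}: since $\sum_{j\neq k}\psi_T(j)\psi_T(k)/|x_j(t)-x_k(t)|^2=\sum_{j\neq k}\psi_T(j)\psi_T(k)E_{jk}(t)$, that lemma rewrites this as $\tilde E_T(t)+\sum_{j\neq k}\psi_T(j)\psi_T(k)/|\xi_k-\xi_j|^2+\tilde O(1)$, and using \eqref{xidif} together with the fact that $\psi_T$ is $\asymp 1$ for $|j|\lesssim T\log T$ and decays like a large power beyond, the background sum here is $\ll T\log^3_+ T$ (its dominant contribution coming from near-diagonal pairs $|k-j|\asymp 1$ at height $\asymp T$). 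For part (ii) I would use the pointwise inequality, valid for $r_{jk}(t):=(x_k(t)-x_j(t))/(\xi_k-\xi_j)>0$,
$$ \frac{1}{|x_j(t)-x_k(t)|}\ll\frac{1}{|\xi_k-\xi_j|}+\mathbf 1_{r_{jk}(t)<1/2}\sqrt{\tilde E_{jk}(t)}, $$
which follows from the asymptotics \eqref{vlog} of $V$ (when $r\geq 1/2$ the first term wins; when $r<1/2$ one has $V(r)\asymp r^{-2}$, hence $|x_j-x_k|^{-1}=|\xi_k-\xi_j|^{-1}r^{-1}\ll\sqrt{V(r)}/|\xi_k-\xi_j|=\sqrt{\tilde E_{jk}(t)}$): the first term gives the ``$\xi_i$'' version of (ii), estimated as in part (i), and the second is bounded by Cauchy--Schwarz as $(\log_+ T)\big(\sum_{j\neq k}\psi_T(j)\psi_T(k)\big)^{1/2}\tilde E_T(t)^{1/2}\ll (\log^{O(1)}_+ T)\,T\,\tilde E_T(t)^{1/2}$ (using $\sum_j\psi_T(j)\ll T\log_+ T$), which after $ab\ll\lambda a^2+\lambda^{-1}b^2$ with an appropriate $\lambda$ is $O(T\log^3_+ T+\tilde E_T(t))$.

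For parts (iii) and (iv) the same decomposition applies, but now one must gain a power of $T$. The equilibrium parts are negligible: in (iii) only $\ll T^{1-c}$ indices contribute, so $(\log^C_+ T)\sum_{|j|,|k|\leq T^{1-c},\,j\neq k}\psi_T(j)\psi_T(k)/|\xi_k-\xi_j|\ll T^{1-c}\log^{O(1)}_+ T=o(T\log^3_+ T)$; in (iv), for $|j|,|k|\geq T^{1+c}$ one has $\psi_T(j)\psi_T(k)\ll T^{-100c}\log^{O(1)}_+ T$ and $\sum_{|j|\geq T^{1+c}}\psi_T(j)\ll T^{1-99c}\log^{O(1)}_+ T$, so the equilibrium part is again $o(T\log^3_+ T)$. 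For the fluctuation parts $\sum\psi_T(j)\psi_T(k)\mathbf 1_{r_{jk}(t)<1/2}\sqrt{\tilde E_{jk}(t)}$ over these restricted ranges I would invoke Lemma \ref{lrdec}: if $r_{jk}(t)<1/2$ then $\tilde H_{jk}(t)=L(r_{jk}(t))\geq L(1/2)>0$, so by the long-range decay bound (choosing the auxiliary function $\eps$ in Lemma \ref{lrdec} to decay slowly) one must have $|k-j|\ll\log^3_+(|j|+|k|)$; hence on the range $|j|,|k|\leq T^{1-c}$ (respectively $T^{1+c}\leq|j|,|k|\leq T^{O(1)}$) the $\psi_T$-weighted number of such pairs is $\ll T^{1-c}\log^{O(1)}_+ T$ (respectively $\ll T^{1-199c}\log^{O(1)}_+ T$, using the decay of $\psi_T$). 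Cauchy--Schwarz against $\tilde E_T(t)$ then bounds the fluctuation part by $\ll(\log^{O(1)}_+ T)\,T^{(1-c)/2}\tilde E_T(t)^{1/2}$ (respectively with exponent $(1-199c)/2$), and $ab\ll\lambda a^2+\lambda^{-1}b^2$ against $T\log^3_+ T+\tilde E_T(t)$ renders this $o(T\log^3_+ T+\tilde E_T(t))$ thanks to the surviving factor $T^{-c/2}$ (respectively $T^{-199c/2}$). The contribution of indices $|j|$ or $|k|$ super-polynomially large in $T$ is negligible by the crude lower bound on gaps from Proposition \ref{gap} together with the rapid decay of $\psi_T$.

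The main obstacle is arranging the Cauchy--Schwarz and elementary-inequality steps so that the energy $\tilde E_T(t)$ is absorbed into the ``moderately sized'' or ``negligible'' budget rather than amplified. In particular, for (iii) and (iv) the naive Cauchy--Schwarz against the total mass $\sum\psi_T(j)\psi_T(k)\asymp (T\log_+ T)^2$ is too lossy, and one genuinely needs the counting bound coming from Lemma \ref{lrdec} (that a compressed gap $r_{jk}(t)<1/2$ can occur only for nearby indices) to produce a power saving in $T$; one must also check that this saving is uniform as the constants $C,c$ vary, and that the various logarithmic losses in (i) and (ii) stay within the $T\log^3_+ T$ budget.
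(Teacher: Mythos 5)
Parts (i), (iii) and (iv) of your proposal are essentially fine. Part (i) is the paper's own argument: the rewriting via Lemma \ref{eb2} plus the bound $\sum_{k \neq j} |\xi_j-\xi_k|^{-2} \ll \log_+^2 j$ from \eqref{xidif}. For (iii) and (iv) you take a genuinely different route: the paper just applies the pointwise inequality $\frac{\log_+^C T}{|x_j-x_k|} \leq \frac{1}{\log_+ T}\frac{1}{|x_j-x_k|^2} + \log_+^{2C+1} T$ and invokes (i) together with a crude count of pairs, whereas you pass to the equilibrium gaps and use Lemma \ref{lrdec} to show that a compressed gap $r_{jk}<1/2$ forces $|j-k| \ll \log^3_+(|j|+|k|)$, so that Cauchy--Schwarz against $\tilde E_T(t)$ carries a power saving $T^{-c/2}$ which the weighted mean inequality converts into a negligible contribution. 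That extra care is a real gain: the crude pair count $\log_+^{2C+1} T \cdot T^{2-2c}$ in the paper's one-liner only beats $T\log^3_+ T$ when $c>1/2$, while your localization works for every $c>0$, which is the regime actually invoked later (e.g.\ $c=0.1$ in the treatment of $X_4$).

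The genuine gap is in part (ii), exactly at the point you flag as the obstacle. Your pointwise inequality $\frac{1}{|x_j-x_k|} \ll \frac{1}{|\xi_j-\xi_k|} + \mathbf{1}_{r_{jk}<1/2}\sqrt{\tilde E_{jk}(t)}$ is correct, but both ensuing estimates fail. First, the equilibrium term is not ``estimated as in part (i)'': for the first power of the gap the inner sum $\sum_{k\neq j}\psi_T(k)/|\xi_j-\xi_k|$ is $\asymp \log_+^2 T$ for $|j| \lesssim T\log T$ (a density factor $\log_+ T$ from \eqref{xidif} times a harmonic sum over $\asymp T\log_+ T$ indices), so the near-diagonal pairs do not dominate; the full sum $\sum_{j\neq k}\psi_T(j)\psi_T(k)/|\xi_j-\xi_k|$ is $\asymp T\log^3_+ T$, and after the prefactor $\log_+ T$ this term is $\asymp T\log^4_+ T$, which is not moderately sized --- so the first-power equilibrium route cannot yield the stated bound (this also shows the $\xi$-variant of (ii) is log-tight, so the bookkeeping here is delicate in any case). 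Second, the absorption of the fluctuation term fails: Cauchy--Schwarz against the total mass gives $(\log_+ T)\bigl(\sum_{j\neq k}\psi_T(j)\psi_T(k)\bigr)^{1/2}\tilde E_T(t)^{1/2} \asymp T\log_+^2 T\,\tilde E_T(t)^{1/2}$, and no choice of $\lambda$ in $ab \ll \lambda a^2 + \lambda^{-1}b^2$ makes this $O(T\log^3_+ T + \tilde E_T(t))$: keeping $\lambda a^2 \ll T\log^3_+ T$ forces $\lambda^{-1} \gg T\log_+ T$, and indeed at $\tilde E_T(t) \asymp T\log^3_+ T$ the product is $\asymp T^{3/2}\log_+^{7/2} T$. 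Unlike (iii), (iv) there is no power saving, so the very loss you warn against for the naive Cauchy--Schwarz is fatal precisely in (ii). For comparison, the paper's proof of (ii) never passes through the first power of the equilibrium gaps: it trades one power of the gap for two pointwise, via $\frac{\log_+ T}{|x_j-x_k|} \leq \frac{1}{|x_j-x_k|^2} + \log_+^2 T$, and reduces directly to (i).
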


\begin{proof}  For brevity we omit the explicit dependence on the time $t$.  Also, all summation indices $i,j,k$ are understood to range in $\Z^*$.

From \eqref{xidif} we see that
$$ \sum_{k:\, k \neq j} \frac{1}{|\xi_j-\xi_k|^2} \ll \log^2_+ j $$
for all $j \in \Z^*$, and hence
$$ \sum_{j,k:\, j \neq k} \psi_T(j) \psi_T(k) \frac{1}{|\xi_j-\xi_k|^2} \ll T \log^3_+ T.$$
From this and Lemma \ref{eb} we conclude (i).  Using
$$ \frac{\log_+ T}{|x_j(t)-x_k(t)|} \leq \frac{1}{|x_j(t)-x_k(t)|^2} + \log_+^2 T$$
we then obtain (ii).  If instead we use
$$ \frac{\log^C_+ T}{|x_j(t)-x_k(t)|} \leq \frac{1}{\log_+ T} \frac{1}{|x_j(t)-x_k(t)|^2} + \log_+^{2C+1} T$$
we obtain (iii) and (iv).  Similarly if the $x_i$ are replaced by $\xi_i$ throughout.
\end{proof}

We now have the following crucial derivative computation:

\begin{proposition}\label{dorium}  In the range $\Lambda/2 \leq t \leq 0$, the function ${\mathcal H}_T$ is absolutely continuous, and the derivative $\partial_t \tilde {\mathcal H}_T(t)$ is equal to $-4 \tilde E_T(t)$ plus negligible terms for almost all $t$.  In other words, one has
\begin{equation}\label{tash}
\partial_t \tilde{\mathcal H}_T(t) = -4 \tilde E_T(t) + o_{T \to \infty}\left( T \log^3 T + \tilde E_T(t) \right)
\end{equation}
for almost every $t$.
\end{proposition}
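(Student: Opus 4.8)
The plan is to differentiate the truncated renormalized Hamiltonian $\tilde{\mathcal H}_T(t)$ term by term using the Hamiltonian identity of Lemma \ref{ident}(v), check that the differentiation under the summation sign is legitimate (using the weak integrated energy bound of Proposition \ref{energy-weak} together with the rapid decay of $\psi_T$), and then match the resulting main term with $-4\tilde E_T(t)$ after accounting for the renormalization subtractions; every leftover term is to be shown either moderately sized or negligible in the sense of Lemma \ref{neg}, with the cross terms between the nearby-pair region and its complement being the delicate part.

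First I would set up the differentiation. Writing $\tilde{\mathcal H}_T(t) = \sum_{j\sim_T k} \psi_T(j)\psi_T(k)(H_{jk}(t) - \log\frac{1}{|\xi_j-\xi_k|})$, only the $H_{jk}$ depend on $t$, so formally $\partial_t \tilde{\mathcal H}_T = \sum_{j\sim_T k}\psi_T(j)\psi_T(k)\partial_t H_{jk}$. From Lemma \ref{ident}(i), $\partial_t H_{jk} = -\partial_t(x_j-x_k)/(x_j-x_k) = -\frac{4}{(x_j-x_k)^2} + 2\sum_{i\neq j,k}\frac{1}{(x_i-x_j)(x_i-x_k)}$. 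To justify interchanging $\partial_t$ with the (absolutely convergent but infinite) sum over nearby pairs, I would integrate in $t$ and invoke Fubini: the $\frac{1}{(x_j-x_k)^2}$ piece is controlled after integration by Proposition \ref{energy-weak} (dyadically decomposed, weighted by $\psi_T$), while the triple sum $\sum_i \frac{1}{(x_i-x_j)(x_i-x_k)}$ is handled by $|ab|\le \frac12(a^2+b^2)$, reducing again to pairwise inverse-square sums with $\psi_T$-weights, which are absolutely integrable by the same proposition; this gives absolute continuity of $\tilde{\mathcal H}_T$ and validity of \eqref{tash} as an almost-everywhere identity.

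Next I would identify the main term. The diagonal-type contribution $-4\sum_{j\sim_T k}\psi_T(j)\psi_T(k)\frac{1}{(x_j-x_k)^2}$ must be reconciled with $-4\tilde E_T(t) = -4\sum_{j\neq k}\psi_T(j)\psi_T(k)\tilde E_{jk}(t)$. Two adjustments are needed: (a) restore the pairs with $j\not\sim_T k$, and (b) pass from $E_{jk}=\frac{1}{|x_j-x_k|^2}$ to the renormalized $\tilde E_{jk} = E_{jk} - \frac{1}{|\xi_j-\xi_k|^2} + 2\frac{(x_k-\xi_k)-(x_j-\xi_j)}{(\xi_k-\xi_j)^3}$ via \eqref{tejk}. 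For (a), when $j\not\sim_T k$ one has $|j-k|\ge (T^2+|j|+|k|)^{0.1}$, so by \eqref{xidif}–\eqref{xji} the gap $|x_j-x_k|$ is comparable to $|\xi_j-\xi_k|\gg |j-k|/\log_+$, and the missing $\psi_T$-weighted sum of $E_{jk}$ over such pairs is $\lessapprox T^{2-\text{something}}$, hence negligible; likewise the $\frac{1}{|\xi_j-\xi_k|^2}$ and linear-correction terms over that range are negligible by the same estimates. For (b), the subtracted constants $\frac{1}{|\xi_j-\xi_k|^2}$ summed against $\psi_T(j)\psi_T(k)$ give $O(T\log^3_+T)$ (as computed in the proof of Lemma \ref{neg}(i)), hence moderately sized, and the linear correction term is handled exactly as in Lemma \ref{eb2}: desymmetrize, use that $k\mapsto \frac{1}{(\xi_k-\xi_j)^3}$ is odd around $j$, and exploit cancellation, leaving $\tilde O(1)$ per $j$ and $\tilde O(T)$ overall, which is negligible.

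The main obstacle, as usual in these monotonicity computations, is the triple-sum cross term $2\sum_{j\sim_T k}\psi_T(j)\psi_T(k)\sum_{i\neq j,k}\frac{1}{(x_i-x_j)(x_i-x_k)}$, which has no definite sign. Following the philosophy of Lemma \ref{ident}, I would split the inner sum according to whether $i\sim_T j$ and $i\sim_T k$ (so that $\{i,j,k\}$ is a "nearby cluster"), in which case the summand is part of the signed quantity appearing in the energy identity and contributes to the renormalization of $\tilde E_T$ in a way that telescopes against the local structure, or whether $i$ is "far" from $j$ or $k$, in which case $|x_i-x_j|$ or $|x_i-x_k|$ is large ($\gg (\ldots)^{0.1}$) and one uses $|ab|\le\frac12(a^2+b^2)$ to dominate by $\psi_T$-weighted sums of $\frac{1}{|x_i-x_j|^2}$ restricted to ranges where one obtains a genuine power-of-$T$ gain, making the contribution negligible after integration via Proposition \ref{energy-weak} and Lemma \ref{neg}(iii)–(iv). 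Care is needed because $\psi_T(i)$ does not appear in this inner sum, so one must separately control the tail $|i|\gg T\log T$: there $|x_i - x_j|\gg |i|/\log_+ i$ for $j$ in the bulk, and the sum over such $i$ converges with a lot of room to spare, contributing only $\tilde O(T)$. Assembling all pieces and invoking the "moderately sized"/"negligible" bookkeeping of Lemma \ref{neg} then yields \eqref{tash}.
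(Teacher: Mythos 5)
Your setup (term-by-term differentiation via Lemma \ref{ident}(i), justified by integrating in time and invoking dominated convergence with Proposition \ref{energy-weak} and the decay of $\psi_T$) matches the paper, but the core of the proposition is mishandled. The error term in \eqref{tash} must be \emph{negligible}, i.e.\ $o_{T\to\infty}(T\log^3 T+\tilde E_T(t))$; ``moderately sized'' is not good enough, and indeed the whole point of the later applications (Lemma \ref{lame}, Corollary \ref{core}) is the $o(\cdot)$ gain. Your step (b) treats the renormalization term $\sum_{j\neq k}\psi_T(j)\psi_T(k)\frac{1}{|\xi_j-\xi_k|^2}$, which is genuinely of size $\asymp T\log^3 T$, as an error that is ``moderately sized''; with that concession you can only conclude $\partial_t\tilde{\mathcal H}_T=-4\tilde E_T+O(T\log^3 T)$, which is strictly weaker than \eqref{tash} and insufficient downstream. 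This term cannot be discarded at all: it has to be \emph{cancelled} against the sign-indefinite cross term $2\sum_{j\sim_T k}\psi_T(j)\psi_T(k)\sum_{i\neq j,k}\frac{1}{(x_i-x_j)(x_i-x_k)}$, which is therefore itself of size $\asymp T\log^3 T$ and cannot be shown negligible by $|ab|\le\frac12(a^2+b^2)$ or any absolute-value bound.

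Concretely, your decomposition of the cross term assigns the roles the wrong way around. The ``nearby cluster'' piece ($i\sim_T j$ and $i\sim_T k$) does not ``contribute to the renormalization'': in the paper it is negligible because, after replacing the weights by $\psi_T(i)^{2/3}\psi_T(j)^{2/3}\psi_T(k)^{2/3}$ up to a $T^{-0.8}$-type error, the symmetrized triple sum vanishes identically. The piece that carries the renormalization mass is precisely the one you declare negligible: $i\sim_T k$ but $i\not\sim_T j$ (the paper's $X_3$). There is no power-of-$T$ gain there; instead one compares it with the same expression for the classical locations $\xi_i$ and exploits that the full $\xi$-configuration expression \eqref{sumjk-2} is negligible thanks to a principal-value/oddness cancellation (the inner sums are $o_{T\to\infty}(\log T)$ per index, not just $O(\log T)$), so that $X_3$ supplies, up to negligible errors, exactly $+\sum\psi_T\psi_T\frac{1}{(\xi_k-\xi_j)^2}$, turning the bare energy into $\tilde E_T$ via Lemma \ref{eb2}. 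Even your genuinely far piece ($i\not\sim_T k$) needs this cancellation: a crude bound gives only $O(\log T)$ per $k$, hence a moderately sized but not negligible total, and in your symmetric form the factor $\frac{1}{x_i-x_j}$ with $i$ just across the $\sim_T$ cutoff from $j$ admits no pointwise gain and, after AM--GM and Proposition \ref{energy-weak}, is bounded only by powers of $T$ far exceeding $T\log^3 T$. Without the comparison to the $\xi$-configuration and the attendant cancellations, the proof as proposed does not reach \eqref{tash}.
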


\begin{remark} This may be compared with Lemma \ref{ident}(v) or indeed the formal identity \eqref{formal_e}. That the right hand side is approximated in terms the renormalized energy, rather than just the energy, may be thought of heuristically as being a result of $\partial_t \mathcal{H}$ vanishing when the zeros $x_j$ settle on an equilibrium, being spaced like the points $\xi_j$.
\end{remark}

\begin{proof}  As before, we omit the explicit dependence on $t$, and all summation indices are understood to lie in $\Z^*$.
By \eqref{ode} we have
\begin{equation}\label{oda}
\partial_t H_{jk}(t) = -\frac{2}{x_k-x_j}
\left(\sum_{i:\, i \neq k}^\prime \frac{1}{x_k - x_i} - \sum_{i:\, i \neq j}^\prime \frac{1}{x_j - x_i}\right).
\end{equation}
If we \emph{formally} insert this into \eqref{hform}, and desymmetrize in $j$ and $k$, we would obtain the identity
\begin{equation}\label{formal-oda} 
\partial_t \tilde {\mathcal H}_T = 
-4\sum_{j,k:\, j \sim_T k} \psi_T(j) \psi_T(k) \frac{1}{x_k-x_j} \sum_{i:\, i \neq k}^\prime \frac{1}{x_k - x_i}.
\end{equation}
However, we need to justify the interchange of the derivative and the infinite summation.  First, we use the fundamental theorem of calculus to rewrite \eqref{oda} in integral form as
$$ H_{jk}(0) - H_{jk}(t_0) = -2 \int_{t_0}^0 \frac{1}{x_k(t)-x_j(t)} 
\left(\sum_{i:\, i \neq k}^\prime \frac{1}{x_k(t) - x_i(t)}- \sum_{i:\, i \neq j}^\prime \frac{1}{x_j(t) - x_i(t)}\right)\, dt$$
for any $\Lambda/2 \leq t_0 \leq 0$.  Multiplying by $\psi_T(j) \psi_T(k)$, we conclude that
\begin{multline*} {\mathcal H}_T(0) - {\mathcal H}_T(t_0) 
= - 2 \sum_{j,k: j \sim_T k} \psi_T(j) \psi_T(k) \int_{t_0}^0 \frac{1}{x_k(t)-x_j(t)} \\
\times
\left( \sum_{i:\, i \neq k}^\prime \frac{1}{x_k(t) - x_i(t)} - \sum_{i:\, i \neq j}^\prime \frac{1}{x_j(t) - x_i(t)} \right)\, dt.
\end{multline*}
By the dominated convergence theorem, we can interchange the outer sum and the integral as soon as we can show that the expression
$$
 \sum_{j,k: j \sim_T k} \psi_T(j) \psi_T(k) \int_{t_0}^0 \frac{1}{|x_k(t)-x_j(t)|}
\left( \left|\sum_{i:\, i \neq k}^\prime \frac{1}{x_k(t) - x_i(t)}\right| + \left|\sum_{i:\, i \neq j}^\prime \frac{1}{x_j(t) - x_i(t)}\right| \right)\, dt 
$$
is finite.  By symmetry in $j$ and $k$, it suffices to show that
\begin{equation}\label{Stil}
 \sum_{j,k: j \sim_T k} \psi_T(j) \psi_T(k) \int_{t_0}^0 \frac{1}{|x_k(t)-x_j(t)|}
\left|\sum_{i:\, i \neq k}^\prime \frac{1}{x_k(t) - x_i(t)}\right|\, dt 
\end{equation}
is finite.  But using \eqref{add}, \eqref{xji} we can crudely bound
$$ \left|\sum_{i:\, i \neq k}^\prime \frac{1}{x_k(t) - x_i(t)}\right|, \frac{1}{|x_k(t)-x_j(t)|}  \ll \log_+^{O(1)}(k) \left( \frac{1}{|x_k(t)-x_{k-1}(t)|} + \frac{1}{|x_k(t)-x_{k+1}(t)|} \right)$$
(using the convention $x_0(t)=0$), so the expression \eqref{Stil} may in turn be crudely bounded by
$$ \sum_k \psi_T^2(k) (T + |k|)^{0.1} \log_+^{O(1)}(k) \int_{t_0}^0 \frac{1}{|x_k(t)-x_{k-1}(t)|^2} + \frac{1}{|x_k(t)-x_{k+1}(t)|^2}\, dt,$$
and this will be finite thanks to Proposition \ref{energy-weak} and \eqref{psit-def}.  We conclude (after desymmetrizing in $j$ and $k$) that
$$ {\mathcal H}_T(0) - {\mathcal H}_T(t_0) = - 4 \int_{t_0}^0 \sum_{j,k: j \sim_T k} \psi_T(j) \psi_T(k) \frac{1}{x_k(t)-x_j(t)}
\sum_{i:\, i \neq k}^\prime \frac{1}{x_k(t) - x_i(t)}\, dt.$$
The above analysis also shows that the integrand is absolutely integrable in time.  From the Lebesgue differentiation theorem, we conclude that $\tilde {\mathcal H}_T$ is absolutely continuous and that \eqref{formal-oda} holds at almost every time $t$.

To conclude the proof of the proposition, it will thus suffice to show that
\begin{equation}\label{sumjk}
 \sum_{j,k: j \sim_T k} \psi_T(j) \psi_T(k) \frac{1}{x_k-x_j} 
\sum_{i \neq k}^\prime \frac{1}{x_k - x_i}
\end{equation}
is equal to $\tilde E_T$ plus negligible terms.  We can split this expression as $X_1 + X_2 + X_3 + X_4$, where
\begin{align*}
X_1 &\coloneqq \sum_{j,k: j \sim_T k} \psi_T(j) \psi_T(k) \frac{1}{(x_k-x_j)^2}  \\
X_2 &\coloneqq \sum_{j,k: j \sim_T k} \psi_T(j) \psi_T(k) \frac{1}{x_k-x_j} 
\sum_{i:\, i \sim_T j, k} \frac{1}{x_k - x_i} \\
X_3 &\coloneqq \sum_{j,k: j \sim_T k} \psi_T(j) \psi_T(k) \frac{1}{x_k-x_j} 
\sum_{i:\, i \sim_T k;\, i \not \sim_T j;\, i\neq j} \frac{1}{x_k - x_i} \\
X_4 &\coloneqq \sum_{j,k: j \sim_T k} \psi_T(j) \psi_T(k) \frac{1}{x_k-x_j} 
\sum_{i:\, i \not \sim_T k;\,  i \neq k}^\prime \frac{1}{x_k - x_i}.
\end{align*}

We first claim that $X_4$ is negligible.  From \eqref{xji} we have
$$ x_k - x_i = \xi_k - \xi_i + O( \log_+(|i|+|k|) )$$
and hence (by \eqref{xidif})
$$ \frac{1}{x_k - x_i} = \frac{1}{\xi_k - \xi_i} + O\left( \frac{\log^2_+(|i|+|k|)}{|k-i|^2} \right),$$
which implies that
$$ 
\sum_{i:\, i \not \sim_T k;\,  i \neq k}^\prime \frac{1}{x_k - x_i} = \sum_{i:\, i \not \sim_T k;\, i \neq k}^\prime \frac{1}{\xi_k - \xi_i}
+ \tilde O( T^{-0.1} ).$$
From \eqref{xidif} we may crudely bound this sum by $\tilde O(1)$.  By Lemma \ref{neg}(iii), this shows that the contribution to $X_4$ of those $k$ for which $|k| \leq T^{0.9}$ or $|k| \geq T^{1.1}$ (say) is negligible, so we may assume $T^{0.9} \leq |k| \leq T^{1.1}$.  Let $A \geq 2$ be a large constant.  Using \eqref{xidif} we may write
$$
\sum_{i:\, i \not \sim_T k;\, i \neq k}^\prime \frac{1}{\xi_k - \xi_i} = \sum_{i:\, T^{0.2} \leq |k-i| \leq A |k|} \frac{1}{\xi_k - \xi_i}
+ \sum_{i:\, |i| \geq A |k|} \frac{1}{\xi_k - \xi_i} + O\left( \frac{\log T}{A}  \right).$$
For the first sum on the right-hand side, we use \eqref{add-2} (as in the proof of \eqref{stat}) as well as \eqref{xia} to conclude that
$$ \frac{1}{\xi_k - \xi_i} = \frac{\log \xi_k}{4\pi} \frac{1}{k-i} + O_A\left( \frac{1}{|k|} \right ),$$
where the subscript in the $O_A$ notation means that the implied constant can depend on $A$.  As $i \mapsto \frac{\log \xi_k}{4\pi} \frac{1}{k-i} $ is odd around $k$, we conclude that
$$ \sum_{i:\, T^{0.2} \leq |k-i| \leq A |k|} \frac{1}{\xi_k - \xi_i} = O_A( 1 ).$$
Meanwhile, combining the $i$ and $-i$ terms and using \eqref{xidif}, \eqref{xia} we have
$$ \sum_{i:\, |i| \geq A |k|} \frac{1}{\xi_k - \xi_i} = - 2 \xi_k \sum_{i:\, i \geq A|k|} \frac{1}{\xi_i^2 - \xi_k^2} = O\left( \frac{\log T}{A} \right).$$
Sending $A$ slowly to infinity, we conclude that
$$ 
\sum_{i:\,  i \not \sim_T k;\, i \neq k}^\prime \frac{1}{x_k - x_i}  = o_{T \to \infty}( \log T )$$
and the negligibility of $X_4$ then follows from Lemma \ref{neg}(ii).

Now we claim that $X_2$ is negligible.  Thanks to the restrictions on $i,j,k$, we see that
$$ \psi_T(i), \psi_T(j) = \left(1 + \tilde O\left((T+|k|)^{-0.8}\right)\right) \psi_T(k)$$
and hence
$$ \psi_T(j) \psi_T(k) = \psi_T(i)^{2/3} \psi_T(j)^{2/3} \psi_T(k)^{2/3} + \tilde O( (T+|k|)^{-0.8} \psi_T(j) \psi_T(k)).$$
The sum
$$ \sum_{i,j,k:\, j \sim_T k;\, i \sim_T j, k} \frac{\psi_T(i)^{2/3} \psi_T(j)^{2/3} \psi_T(k)^{2/3}}{(x_k-x_j)(x_k - x_i)} $$
symmetrises to zero, and hence
$$ X_2 \lessapprox \sum_{i,j,k:\, j \sim_T k;\, i \sim_T j, k} (T+|k|)^{-0.8} \frac{\psi_T(j) \psi_T(k)}{|x_k-x_j| |x_k - x_i|}.$$
Estimating $\frac{1}{|x_k-x_j| |x_k - x_i|} \ll \frac{1}{|x_k-x_j|^2} + \frac{1}{|x_k - x_i|^2}$ and performing the $i$ or $j$ summation respectively, we conclude that
$$ X_2 \lessapprox \sum_{j,k:\, j \sim_T k} (T+|k|)^{-0.6} \frac{\psi_T(j) \psi_T(k)}{|x_k-x_j|^2}$$
and so $X_2$ is negligible thanks to Lemma \ref{neg}(i).

We have shown that the expression \eqref{sumjk} is equal to $X_1+X_3$ plus negligible terms.  A similar argument (replacing $x_i$ with $\xi_i$ throughout) shows that the expression
\begin{equation}\label{sumjk-2}
 \sum_{j,k:\, j \sim_T k} \psi_T(j) \psi_T(k) \frac{1}{\xi_k-\xi_j} \sum_{i:\, i \neq k}^\prime \frac{1}{\xi_k - \xi_i}
\end{equation}
is equal to $X'_1+X'_3$ plus negligible terms, where
\begin{align*}
X'_1 &\coloneqq \sum_{j,k:\, j \sim_T k} \psi_T(j) \psi_T(k) \frac{1}{(\xi_k-\xi_j)^2}  \\
X'_3 &\coloneqq \sum_{j,k:\, j \sim_T k} \psi_T(j) \psi_T(k) \frac{1}{\xi_k-\xi_j} 
\sum_{i:\, i \sim_T k;\, i \not \sim_T j;\, i \neq j} \frac{1}{\xi_k - \xi_i}.
\end{align*}
From Lemma \ref{eb2}, we see that $\tilde E_T$ is equal to 
\begin{equation}\label{mang}
X_1 - X'_1 + \sum_{j,k: j \not \sim_T k;\, j \neq k}\psi_T(j) \psi_T(k) \left(\frac{1}{(x_k-x_j)^2} - \frac{1}{(\xi_k-\xi_j)^2}\right)
\end{equation}
up to negligible terms.  From \eqref{xia}, \eqref{xidif} we have
$$ \frac{1}{(x_k-x_j)^2} - \frac{1}{(\xi_k-\xi_j)^2} \lessapprox \frac{\log_+^{O(1)}(|j|+|k|)}{|k-j|^3} $$ 
when $j \neq k$ and $j \not \sim_T k$, so the final term in \eqref{mang} is negligible.  Thus, to complete the proof of the proposition, it will suffice to show that the expression \eqref{sumjk-2} and the difference $X_3 - X'_3$ are both negligible.

The expression \eqref{sumjk-2} may be rearranged as
$$ \sum_k \psi_T(k) \left(\sum_{j:\, j \sim_T k} \frac{\psi_T(j)}{\xi_k-\xi_j}\right) \left(\sum_{i:\, i \neq k}^\prime \frac{1}{\xi_k - \xi_i}\right).$$
By \eqref{xidif}, both inner sums are $\tilde O(1)$, so the contribution of those $|k| \leq T^{0.5}$ or $|k| \geq T^{1.5}$ (say) are negligible.  For $T^{0.5} < |k| \leq T^{1.5}$, we see from \eqref{souse} that the factor $\sum_{j: j \sim_T k} \frac{\psi_T(j)}{\xi_k-\xi_j}$ is $o_{T \to \infty}( \log T )$, and from \eqref{xikjd}, \eqref{xidif}, and the triangle inequality we also see that $\sum_{i:\, i \neq k}^\prime \frac{1}{\xi_k - \xi_i} = O( \log T )$.  Thus \eqref{sumjk-2} is negligible as required.

Finally, we show that $X_3 - X'_3$ is negligible.  This quantity may be written as
$$
\sum_{i,j,k:\, i, j \sim_T k;\, |i-j| > (T^2+|i|+|j|)^{0.1}} 
\psi_T(j) \psi_T(k) ( \frac{1}{(x_k-x_j)(x_k - x_i)} - \frac{1}{(\xi_k-\xi_j)(\xi_k - \xi_i)} ).$$
Observe that if $|k-j|$ and $|k-i|$ are both larger than or equal to $T^{0.1}$, then from \eqref{xji}, \eqref{xidif} one has
$$ \frac{1}{(x_k-x_j)(x_k - x_i)} - \frac{1}{(\xi_k-\xi_j)(\xi_k - \xi_i)}  \ll \frac{\log^{O(1)}_+(|i|+|j|+|k|)}{ T^{0.1} |\xi_k-\xi_j| |\xi_k - \xi_i|} \ll \frac{\log^{O(1)}_+(|i|+|j|+|k|)  }{T^{0.1} |k-j| |k-i|},$$
and so the contribution of this case is negligible.  From the triangle inequality, we see that it is not possible for 
$|k-j|$ and $|k-i|$ to both be less than $T^{0.1}$, so it remains to treat the components
\begin{equation}\label{con1}
\sum_{\substack{i,j,k:\, 0 < |j-k| < (T^2 + |j| + |k|)^{0.1} \\ 0 < |i-k| < T^{0.1};\, |i-j| > (T^2+|i|+|j|)^{0.1}}} 
\psi_T(j) \psi_T(k) \left( \frac{1}{(x_k-x_j)(x_k - x_i)} - \frac{1}{(\xi_k-\xi_j)(\xi_k - \xi_i)} \right)
\end{equation}
and
\begin{equation}\label{con2}
\sum_{\substack{i,j,k:\, 0 < |j-k| < T^{0.1} \\ 0 < |i-k| < (T^2+|i|+|k|)^{0.1};\,  |i-j| > (T^2+|i|+|j|)^{0.1}}}
\psi_T(j) \psi_T(k) \left( \frac{1}{(x_k-x_j)(x_k - x_i)} - \frac{1}{(\xi_k-\xi_j)(\xi_k - \xi_i)} \right).
\end{equation}

Consider first \eqref{con1}.  From the triangle inequality we have $|j-k| \gg T^{0.2}$, and hence by \eqref{xji}
$$\frac{1}{x_k -x_j} = (1 + \tilde O(T^{-0.2})) \frac{1}{\xi_k - \xi_j}.$$
By Lemma \ref{neg}(ii) and \eqref{xidif} we may thus replace $\frac{1}{x_k-x_j}$ by $\frac{1}{\xi_k-\xi_j}$ at negligible cost in \eqref{con1}, leaving us with
$$
\sum_{\substack{i,j,k:\, 0 < |j-k| < (T^2 + |j| + |k|)^{0.1} \\ 0 < |i-k| < T^{0.1};\, |i-j| > (T^2+|i|+|j|)^{0.1}}} 
\psi_T(j) \psi_T(k) \left( \frac{1}{x_k-x_i} - \frac{1}{\xi_k-\xi_i} \right) \frac{1}{\xi_k-\xi_j}
$$
up to negligible errors.  But by \eqref{add-2} and the hypothesis $|i-k| \leq T^{0.1}$, one may bound
$$ \sum_{\substack{j:\, 0 < |j-k| < (T^2 + |j| + |k|)^{0.1} \\ |i-j| > (T^2+|i|+|j|)^{0.1}}} \frac{\psi_T(j)}{|\xi_k - \xi_j|} \lessapprox T^{-0.1} \psi_T(k)$$
when $T^{0.9} \leq |k| \leq T^{1.1}$, and use the weaker bound
$$ \sum_{\substack{j:\, 0 < |j-k| < (T^2 + |j| + |k|)^{0.1} \\ |i-j| > (T^2+|i|+|j|)^{0.1}}} \frac{\psi_T(j)}{|\xi_k - \xi_j|} \lessapprox \psi_T(k)$$
for all other $k$, so this expression is also negligible by Lemma \ref{neg}(ii), (iii), (iv) (noting that $\psi_T(k)$ and $\psi_T(i)$ are comparable).  A similar argument also handles \eqref{con2}.
\end{proof}

To use Proposition \ref{dorium}, we need estimates that ensure $\tilde E_T$ is large when $\tilde {\mathcal H}_T$ is large.  To this end we have

\begin{lemma}\label{lame}  Let $m$ be a natural number, and let $\Lambda/2 \leq t \leq 0$.  Let $T > 0$, and let $\delta = \delta(T)$ go to zero as $T \to \infty$ sufficiently slowly. If $\tilde {\mathcal H}_T(t) \geq \delta m T \log^3_+ T$, then $\tilde E_T(t) \gg \delta 2^{2m} T \log^3_+ T$ where the implied constant is absolute.
\end{lemma}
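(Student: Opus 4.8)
The plan is to convert the lower bound on the renormalized truncated Hamiltonian $\tilde{\mathcal H}_T(t)$ into one on the renormalized energy $\tilde E_T(t)$, using that the energy interaction of a pair of zeros is \emph{exponentially} larger than its Hamiltonian interaction whenever the two zeros lie much closer together than the classical spacing, together with the fact (a consequence of the Riemann--von Mangoldt asymptotics of Section~\ref{vm}) that this is essentially the only mechanism by which $\tilde{\mathcal H}_T(t)$ can become large. Throughout I suppress the time variable.

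First I would apply Lemma~\ref{hamil-form} to replace $\tilde{\mathcal H}_T$ by $\sum_{j\sim_T k}\psi_T(j)\psi_T(k)\tilde H_{jk}$ up to an $o_{T\to\infty}(T\log^3_+ T)$ error; since $m\ge 1$ and $\delta$ decays slowly, the hypothesis then yields $\sum_{j\sim_T k}\psi_T(j)\psi_T(k)\tilde H_{jk}\ge\tfrac12\delta m T\log^3_+ T$. The key pointwise input is a comparison of the renormalized potentials $L$ and $V$: from $V(x)\ge|x|^{-2}-3$ and $e^{2L(x)}=|x|^{-2}e^{2(|x|-1)}\le|x|^{-2}$ for $0<|x|\le 1$, one gets $V(x)\ge\tfrac12 e^{2L(x)}$ whenever $0<|x|\le 1$ and $L(x)$ exceeds an absolute constant. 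Writing $r_{jk}:=(x_k-x_j)/(\xi_k-\xi_j)$, this says that $\tilde E_{jk}\gg e^{2\tilde H_{jk}}/|\xi_j-\xi_k|^2$, equivalently
\[
\tilde H_{jk}\ \le\ \tfrac12\log\!\big(|\xi_j-\xi_k|^2\,\tilde E_{jk}\big)+O(1),
\]
valid whenever $0<r_{jk}\le 1$ and $\tilde H_{jk}$ exceeds an absolute constant --- i.e.\ for the pairs whose zeros sit much closer together than the classical gap.

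Next I would localize the bulk of $\sum_{j\sim_T k}\psi_T(j)\psi_T(k)\tilde H_{jk}$ to such pairs. By the long-range decay bound of Lemma~\ref{lrdec}, pairs with $|k-j|$ above a suitably slowly growing threshold contribute only $o_{T\to\infty}(T\log^3_+ T)$. For the remaining pairs the macroscopic structure estimate \eqref{add} gives $x_k-x_j=\xi_k-\xi_j+o_{T\to\infty}(\log_+ T)$ when $|k-j|\le\log^2_+\xi_j$, hence $r_{jk}=1+o_{T\to\infty}(\log^2_+ T/|k-j|)$. This has two consequences: every pair on which $\tilde H_{jk}$ is not close to $0$ must have $|k-j|=o_{T\to\infty}(\log^2_+ T)$, so the total $\psi_T$-weight $A$ of the relevant pairs is $o_{T\to\infty}(T\log^3_+ T)$; and the sub-family with $r_{jk}>1$ (on which the exponential comparison fails, since $V$ only grows linearly) contributes at most $o_{T\to\infty}(T\log^3_+ T)$ to the Hamiltonian sum. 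Here $\delta(T)$ is chosen to tend to zero slowly enough to dominate all of these $o(1)$ rates. Discarding the negligible pieces, one is left with $\sum'\psi_T(j)\psi_T(k)\tilde H_{jk}\ge\tfrac14\delta m T\log^3_+ T$, the sum running over a family of pairs with $0<r_{jk}\le 1$ of total weight $A=o_{T\to\infty}(T\log^3_+ T)$; moreover $A\gg\delta m T\log_+ T/\log^{O(1)}_+ T$, since each summand is at most $O(\log^2_+ T\log_+\log_+ T)$ by Proposition~\ref{gap}.

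Finally I would combine these via the concavity of $\log$. On the surviving pairs $|\xi_j-\xi_k|^2\ll\log^{O(1)}_+ T$, so inserting the pointwise inequality, applying Jensen's inequality to $\log$ with the weights $\psi_T(j)\psi_T(k)$, and using $\sum'\psi_T(j)\psi_T(k)\tilde E_{jk}\le\tilde E_T$ (immediate from the definition of $\tilde E_T$ and $\tilde E_{jk}\ge0$), one obtains
\[
\tfrac14\delta m T\log^3_+ T\ \le\ \tfrac12 A\log\!\big(\tilde E_T/A\big)+O\!\big(A\log\log_+ T\big),
\]
hence $\tilde E_T\gg A\exp\!\big(c\,\delta m T\log^3_+ T/A-O(\log\log_+ T)\big)$. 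Writing $A=\rho_T T\log^3_+ T$ with $\rho_T\to0$, the exponent is at least $c'\delta m/\rho_T-O(\log\log_+ T)$, which --- choosing $\delta$ so that $\delta/\rho_T$ dominates $\log\log_+ T$ --- exceeds $(2m+O(\log\log_+ T))\log2$; together with the lower bound on $A$ this gives $\tilde E_T\gg\delta 2^{2m}T\log^3_+ T$, as required. I expect the main difficulty to be precisely this final bookkeeping: one must track the logarithmic losses arising from the $|\xi_j-\xi_k|^2$ factors, from the renormalization correction relating $\tilde E_{jk}$ to $E_{jk}$, and from $A$ being merely $o(T\log^3_+ T)$ rather than genuinely bounded, and check that taking $\delta(T)\to0$ sufficiently slowly absorbs all of them.
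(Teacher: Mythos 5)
Your opening reductions (Lemma \ref{hamil-form} to pass to $\sum\psi_T(j)\psi_T(k)\tilde H_{jk}$, then Lemma \ref{lrdec} to localize to pairs with $|k-j|$ at most about $\eps(j)\log^2_+\xi_j$) coincide with the paper, but your core mechanism does not: the paper splits at the ratio $2^{-m}$, kills the pairs with $|x_j-x_k|\geq 2^{-m}|\xi_j-\xi_k|$ via the bound $L\ll m+|x_j-x_k|/|\xi_j-\xi_k|$ together with a telescoping, averaged gap estimate $\sum_{2^n\leq |j|<2^{n+1}}|x_j-x_{j+h}|\ll |h|2^n/n$ coming from \eqref{xji} (this is \eqref{opt1} and \eqref{clam}), and then uses the multiplicative pointwise comparison $\tilde H_{jk}\ll m2^{-2m}\tilde E_{jk}$ on the strongly compressed pairs, whereas you use an exponential comparison $V\gg e^{2L}$ plus Jensen. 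As written, your route has two genuine gaps, and they are of the same nature: you ask unquantified $o(1)$ rates to beat explicit $\log\log T$ factors, and ``choosing $\delta(T)\to 0$ sufficiently slowly'' cannot arrange this, because $\delta$ must itself tend to zero while the rates in \eqref{add} (equivalently in \eqref{little_o_bound}, whose proof is a compactness argument) and the admissible $\eps(j)$ in Lemma \ref{lrdec} are ineffective and may decay arbitrarily slowly.

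Concretely: (a) for the pairs with $r_{jk}>1$ you only invoke the per-pair bound $r_{jk}-1\ll c(T)\log^2_+T/|k-j|$ with $c(T)$ the rate in \eqref{add}; by \eqref{llog} this gives $\tilde H_{jk}\ll c(T)\log^2_+T/|k-j|$ in the linear regime, and summing over $0<|k-j|\leq \eps(j)\log^2_+\xi_j$ produces $c(T)\log^2_+T\log\log T$ per index $j$, hence a total of order $c(T)\,T\log^3_+T\log\log T$, which is $o(T\log^3_+T)$ only if $c(T)=o(1/\log\log T)$ --- not something you may assume. The statement you want is true, but it needs the paper's telescoping/counting argument (long gaps are rare because consecutive gaps sum to the interval length), which is absent from your proposal. (b) In the Jensen step, since $|\xi_j-\xi_k|$ can be as large as roughly $\eps(j)\log_+ T$ on the relevant pairs, your pointwise inequality only yields $\tilde H_{jk}\leq \tfrac12\log\bigl(|\xi_j-\xi_k|^2\tilde E_{jk}\bigr)+O(1)$, i.e.\ an additive loss of order $\log\log T$ per pair; after Jensen you therefore need the average Hamiltonian per relevant pair, $\delta m T\log^3_+T/A$, to exceed a large multiple of $\log\log T$. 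But your relevant family has weight $A$ bounded below by roughly $\eps\, T\log^3_+ T$ by construction, so this forces $\eps\log\log T=o(\delta)$ with $\delta\to 0$, i.e.\ $\eps=o(1/\log\log T)$ --- again not available, since $\eps$ is constrained from below by ineffective rates. The paper's multiplicative comparison transfers Hamiltonian mass to energy with no additive per-pair loss, which is exactly why its bookkeeping closes with $\delta$ merely dominating the other $o(1)$ quantities rather than an explicit $\log\log T$. To salvage your argument you would need an additional ingredient, e.g.\ showing that pairs with $|k-j|\gg\log_+ T$ (where $|\xi_j-\xi_k|\gg 1$) carry a negligible share of the Hamiltonian mass, or replacing every per-pair use of \eqref{add} by an averaged estimate in the spirit of \eqref{clam}.
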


\begin{proof}  As before, we suppress explicit dependence on $t$, and we may assume $T$ to be large as the claim is trivial from compactness for $T=O(1)$.  From Lemma \ref{hamil-form} we have (for $\delta$ decaying sufficiently slowly) that
$$ \sum_{j,k \in \Z^*:\, j \sim_T k} \psi_T(j) \psi_T(k) \tilde H_{jk}(t) \geq \frac{99}{100} \delta m T \log^3 T.$$
From Lemma \ref{lrdec} we see that
$$ \sum_{k:\, j \sim_T k;\, |k-j| \geq \eps(j) \log^2_+ \xi_j} \tilde H_{jk}(t) \ll \eps(j) \log^2_+ j$$
for any $j \in \Z^*$, which implies that
$$ \sum_{j,k:\, j \sim_T k;\, |k-j| \geq \eps(j) \log^2_+ \xi_j} \psi_T(j) \psi_T(k) \tilde H_{jk}(t) \leq \frac{1}{2} \delta T \log^3 T$$
if $\delta(T)$ goes to zero slowly enough.  By \eqref{hform}, we conclude that
\begin{equation}\label{opt0}
\sum_{j,k:\, j \sim_T k;\, |k-j| < \eps(j) \log^2_+ \xi_j} \psi_T(j) \psi_T(k) \tilde H_{jk}(t) \gg \delta m T \log^3_+ T
\end{equation}

We now claim that
\begin{equation}\label{opt1}
\sum_{\substack{j,k:\, j \sim_T k; \, |k-j| < \eps(j) \log^2_+ \xi_j \\ |x_j-x_k| \geq 2^{-m} |\xi_j-\xi_k|}} \psi_T(j) \psi_T(k) \tilde H_{jk}(t) \ll \delta^2 m T \log^3 T
\end{equation}
(say).  To see this, we use \eqref{llog} and \eqref{add-2} to bound
$$ L_{jk} \ll m + \frac{|x_j-x_k|}{|\xi_j-\xi_k|} \ll m + \frac{|x_j-x_k|}{|j-k|} \log T$$
and also $\psi_T(j) \asymp \psi_T(k)$ for $j,k$ in the sum.  Thus we may bound \eqref{opt1} by
$$
m \sum_{j,k:\, |k-j| < \eps(j) \log^2_+ \xi_j} \psi_T(j)^2 + 
\sum_{j,k:\, 0 < |k-j| < \eps(j) \log^2_+ \xi_j} \psi_T(j)^2 \frac{|x_j-x_k|}{|j-k|} \log T.$$
We may directly compute
$$ \sum_{j,k:\, j \sim_T k;\,  |k-j| < \eps(j) \log^2_+ \xi_j} \psi_T(j)^2  \ll \delta^2 T \log^3 T$$
if $\delta = \delta(T)$ goes to zero slowly enough.  Thus it will suffice to show that
\begin{equation}\label{clam}
 \sum_{j,k:\, 0 < |k-j| < \eps(j) \log^2_+ \xi_j} \psi_T(j)^2 \frac{|x_j-x_k|}{|j-k|} \ll \delta^2 T \log^2 T.
\end{equation}
But for any natural number $n$, we see from telescoping series and \eqref{xji} that
$$ \sum_{j:\, 2^n \leq |j| < 2^{n+1}} |x_j - x_{j+h}| \ll |h| \frac{2^n}{n} $$
whenever $|h| \ll 2^n$; summing over $|h| < \eps(j) \log^2_+\xi_j$, we conclude that
$$ \sum_{\substack{j,k:\, 2^n \leq |j| < 2^{n+1} \\  0 < |k-j| < \eps(j) \log^2_+ \xi_j}} \frac{|x_j-x_k|}{|j-k|} \ll \eps(2^n) 2^n n $$
which gives \eqref{clam} if $\delta$ goes to zero slowly enough.

From \eqref{opt0} and \eqref{opt1} we have
$$\sum_{\substack{j,k: j \sim_T k;\, |k-j| < \eps(j) \log^2_+ \xi_j \\ |x_j-x_k| \leq 2^{-m} |\xi_j-\xi_k|}} \psi_T(j) \psi_T(k) \tilde H_{jk}(t) \gg \delta m T \log^3 T.$$
But for $j,k$ in this sum, we see from \eqref{llog}, \eqref{vlog} that
$$ \tilde H_{jk}(t)  \ll \log \frac{|\xi_j-\xi_k|}{|x_j-x_k|} \ll \frac{m 2^{-2m} |\xi_j-\xi_k|^2}{|x_j-x_k|^2} \ll m 2^{-2m} \tilde E_{jk}
$$
and the claim follows.
\end{proof}

We can now shrink $\tilde {\mathcal H}_T$ down to a reasonable size in finite time:

\begin{corollary}\label{core}  One has $\tilde {\mathcal H}_T(t) = O( \delta T \log^3_+ T )$ for $\Lambda/4 \leq t \leq 0$.
\end{corollary}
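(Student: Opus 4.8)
The plan is to treat $\tilde{\mathcal H}_T$ as a single function of $t$ and exploit the super-exponential rate of decay encoded in Lemma~\ref{lame}: although $\tilde{\mathcal H}_T(\Lambda/2)$ may a priori be very large, the flow forces it down to size $O(\delta T\log^3_+ T)$ within the fixed time span $|\Lambda|/4$, which is exactly why the conclusion is stated on $[\Lambda/4,0]$ and not on all of $[\Lambda/2,0]$. Write $g(t)\coloneqq\tilde{\mathcal H}_T(t)/(T\log^3_+ T)$; by Proposition~\ref{dorium}, $g$ is absolutely continuous on $[\Lambda/2,0]$. First I would extract two consequences of Proposition~\ref{dorium}, both valid once $T$ is large and $\delta=\delta(T)$ decays to zero slowly enough. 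Since $\tilde E_T\ge 0$ and the error term in \eqref{tash} is $o_{T\to\infty}(T\log^3 T + \tilde E_T)$, one obtains the crude one-sided bound $g'(t)\le\eta(T)$ for a.e.\ $t$, where $\eta(T)\to 0$ depends only on $T$. Feeding Lemma~\ref{lame} with $m=\lfloor g(t)/\delta\rfloor$ into \eqref{tash} (absorbing the $o(\tilde E_T)$ error into the main term using $2^{2m}\ge 4$ and $\delta\gtrsim\eta(T)$) yields the key differential inequality: \emph{if $g(t)\ge\delta$, then $g'(t)\le -c_0\,\delta\,4^{g(t)/\delta}$ for a.e.\ such $t$}, with $c_0>0$ absolute.

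The second step is an elementary ODE comparison. Fix $t_1\in[\Lambda/4,0]$; we may assume $g(t_1)>\delta$, else there is nothing to prove. Set $t_0\coloneqq\sup\{s\in[\Lambda/2,t_1]:g(s)\le\delta\}$, with the convention $t_0\coloneqq\Lambda/2$ if this set is empty. By continuity $g\ge\delta$ on $[t_0,t_1]$, and $g(t_0)=\delta$ unless $t_0=\Lambda/2$; moreover $t_0=\Lambda/2$ forces $t_1-t_0\ge|\Lambda|/4$ since $t_1\ge\Lambda/4$. If $t_1-t_0<|\Lambda|/4$, then $t_0>\Lambda/2$, and integrating $g'\le\eta(T)$ over $[t_0,t_1]$ gives $g(t_1)\le g(t_0)+\eta(T)(t_1-t_0)\le\delta+\eta(T)|\Lambda|\le 2\delta$. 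If $t_1-t_0\ge|\Lambda|/4$, then setting $u\coloneqq g/\delta\ge 1$ on $[t_0,t_1]$, the key inequality reads $u'(s)\le -c_0 4^{u(s)}$ a.e.\ there, so $s\mapsto 4^{-u(s)}$ is absolutely continuous on $[t_0,t_1]$ with $\frac{d}{ds}\!\left(4^{-u(s)}\right)\ge c_0\log 4$ a.e.; integrating over $[t_0,t_1]$ gives $4^{-u(t_1)}\ge c_0(\log 4)(t_1-t_0)\ge c_0(\log 4)|\Lambda|/4$, hence $u(t_1)\le\log_4\!\left(\tfrac{4}{c_0(\log 4)|\Lambda|}\right)=O(1)$. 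In either case $g(t_1)=O(\delta)$, i.e.\ $\tilde{\mathcal H}_T(t_1)\le O(\delta T\log^3_+ T)$. The matching lower bound $\tilde{\mathcal H}_T(t)\ge -O(\delta T\log^3_+ T)$ on $[\Lambda/4,0]$ is immediate from Lemma~\ref{hamil-form} and the non-negativity of the $\tilde H_{jk}$ (again for $\delta$ decaying slowly enough), which completes the proof.

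The only genuine content is the first step's passage from the monotonicity identity \eqref{tash} to the differential inequality: Lemma~\ref{lame} says the renormalised energy $\tilde E_T$ is \emph{exponentially} large as a function of $\tilde{\mathcal H}_T/(\delta T\log^3_+ T)$, and this exponential gain is precisely what makes the flow collapse to the $O(\delta T\log^3_+ T)$ scale after any fixed positive time, so that no a priori upper bound on $\tilde{\mathcal H}_T(\Lambda/2)$ is needed. The remaining work is routine: bookkeeping of the various hypotheses that $\delta$ decays slowly enough and that $T$ is large, the standard compactness reduction to large $T$, and the elementary fact that $4^{-u(\cdot)}$ is absolutely continuous whenever $u$ is (and bounded below by $1$). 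I do not expect a real obstacle; the only delicate point, already signalled by the statement, is that the comparison argument must be run over the time interval $[\Lambda/2,\Lambda/4]$, so the bound is available only from $t=\Lambda/4$ onwards.
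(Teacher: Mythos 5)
Your proposal is correct, and it rests on exactly the same pillars as the paper's proof: Proposition \ref{dorium} for the a.e.\ derivative identity, Lemma \ref{lame} to convert largeness of $\tilde{\mathcal H}_T$ into super-exponential largeness of $\tilde E_T$, the crude one-sided bound $\partial_t \tilde{\mathcal H}_T \leq o_{T\to\infty}(T\log^3_+ T)$ coming from non-negativity of $\tilde E_T$, and Lemma \ref{hamil-form} for the (almost) non-negativity of $\tilde{\mathcal H}_T$ that supplies the lower bound. The only place you diverge is the mechanism for the finite-time collapse from arbitrarily large initial data: the paper runs a discrete staircase, showing that $\tilde{\mathcal H}_T$ cannot linger above level $\delta m T\log^3_+ T$ for time longer than $c^{-1}2^{-2m}$ and summing the geometric series $\sum_{m \geq m_0} c^{-1}2^{-2m} \leq 2c^{-1}2^{-2m_0} \leq |\Lambda|/4$, whereas you integrate the differential inequality $u' \leq -c_0 4^{u}$ (with $u = \tilde{\mathcal H}_T/(\delta T\log^3_+ T)$) directly via the absolutely continuous quantity $4^{-u}$, whose derivative is bounded below by $c_0\log 4$ a.e.\ on the set where $u \geq 1$. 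The two are equivalent in content; your ODE comparison is arguably a bit cleaner (no pigeonholing over levels, and the backward-in-time bookkeeping via the last exit time $t_0$ cleanly separates the ``short interval'' case, handled by the crude derivative bound, from the ``long interval'' case), and you are also slightly more careful than the paper's terse ``recalling that $\tilde{\mathcal H}_T$ is non-negative,'' since strictly speaking that non-negativity only holds up to the $o_{T\to\infty}(T\log^3_+ T)$ error of Lemma \ref{hamil-form}, which is exactly how you invoke it. The bookkeeping conditions you flag (finitely many requirements that $\delta(T)$ decay slowly enough relative to the various $o_{T\to\infty}(1)$ rates, and the Lipschitz chain rule for $4^{-u}$ on $u \geq 1$) are all legitimate and present, implicitly or explicitly, in the paper's own argument.
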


\begin{proof}  We may take $T$ to be large.  From Proposition \ref{dorium} and Lemma \ref{lame}, we see that for any natural number $m$, and for almost every time $t$ for which one has
$$ \tilde {\mathcal H}_T(t) \geq \delta m T \log^3 T,$$
one has
$$ \partial_t \tilde {\mathcal H}_T(t) \leq - c \delta 2^{2m} T \log^3 T$$
for some absolute constant $c>0$.  In particular, if $m$ is larger than some large absolute constant $m_0$, and $\Lambda/2 \leq t \leq \Lambda/4$ is such that 
\begin{equation}\label{dem}
 \delta m T \log^3 T \leq \tilde {\mathcal H}_T(t) \leq \delta (m+1) T \log^3 T,
\end{equation}
then it is not possible (for $m_0$ large enough) to have $\tilde {\mathcal H}_T(t') \geq \delta m T \log^3 T$ for all $t \leq t' \leq t + c^{-1} 2^{-2m}$, as this would violate the fundamental theorem of calculus for absolutely continuous functions.  Thus, by the intermediate value theorem, there exists $t \leq t' \leq t + c^{-1} 2^{-2m}$ such that
$$ \delta (m-1) T \log^3 T \leq \tilde {\mathcal H}_T(t') \leq \delta m T \log^3 T,$$
and on iterating this we conclude (for $m_0$ large enough) that there exists $t \leq t'' \leq t + 2 c^{-1} 2^{-2m_0}$ such that
\begin{equation}\label{ham}
 \tilde {\mathcal H}_T(t'') \leq \delta m_0 T \log^3 T.
\end{equation}
We run this argument with $t$ set equal to $\Lambda/2$, and $m$ the unique integer obeying \eqref{dem}, to conclude (for $m_0$ large enough) that there exists $\Lambda/2 \leq t'' \leq \Lambda/4$ obeying \eqref{ham}.  (Note that this conclusion is immediate if the initial value of $m$ was already less than $m_0$.)  On the other hand, from Proposition \ref{dorium} we have $\partial_t \tilde {\mathcal H}_T(t) \leq O(\delta T \log^3 T)$ for almost every $t'' \leq t \leq 0$, if $\delta$ decays sufficiently slowly.  The claim now follows from the fundamental theorem of calculus (absorbing $m_0$ into the implied constants), recalling that $\tilde {\mathcal H}_T$ is non-negative.
\end{proof}

From Proposition \ref{dorium} and the fundamental theorem of calculus for absolutely continuous functions, one has
$$ \tilde {\mathcal H}_T(\Lambda/4) - \tilde {\mathcal H}_T(0) = (4 + o_{T \to \infty}(1)) \int_{\Lambda/4}^0 \tilde E_T(t)\, dt + o_{T \to \infty}( T \log^3_+ T )$$
and the claim \eqref{prelim-3} now follows from Corollary \ref{core}.  This concludes the proof of Theorem \ref{prelim}.

\section{Controlling the energy at time $0$}

In the previous section we controlled a time average of the energy.  Now, using monotonicity properties of the energy, we can in fact control energy at time zero:

\begin{proposition}[Energy bound at time zero]\label{tlar}  Let $T$ be large.  Then
$$ \tilde E^{[T \log T, 2T \log T]_{\Z^\ast}}(0) = o_{T \to \infty}( T \log^3 T ).$$
\end{proposition}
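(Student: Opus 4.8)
The plan is to promote the time-averaged estimate of Theorem \ref{prelim} to a pointwise statement at $t=0$ by combining the approximate monotonicity of the renormalized energy under the dynamics \eqref{ode} with a Bourgain-type pigeonholing of a spatial cutoff. Since $\tilde E^I(t)$ is non-negative and non-decreasing in the discrete interval $I$, it suffices to bound $\tilde E^K(0)$ for a suitable discrete interval $K$ with $[T\log T,2T\log T]_{\Z^*}\subseteq K\subseteq[0.5T\log T,3T\log T]_{\Z^*}$. For any such $K$, Theorem \ref{prelim} and monotonicity give $\int_{\Lambda/4}^0\tilde E^K(t)\,dt=o_{T\to\infty}(T\log_+^3 T)$, and hence by Lemma \ref{eb} also $\int_{\Lambda/4}^0\sum_{j,k\in K:j\ne k}E_{jk}(t)\,dt=O(T\log_+^3 T)$ (the normalization $\sum_{j,k\in K:j\ne k}|\xi_j-\xi_k|^{-2}\asymp T\log_+^3 T$ being $t$-independent). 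Fixing a large constant $A$ and setting $D\coloneqq\log_+^A T$, I would choose the endpoints of $K$ by pigeonholing over $\sim T\log T/D$ admissible positions so that the portion of this integrated bare energy coming from pairs with at least one index within $D$ of $\partial K$ is $o_{T\to\infty}(T\log_+^3 T)$; this is possible precisely because the total integrated bare energy over the range is $O(T\log_+^3 T)$.

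Next I would apply the energy identity Lemma \ref{ident}(iii) to $K$. Converting between $\tilde E^K$ and the bare energy via Lemma \ref{eb} introduces a correction whose time-derivative, by \eqref{tejk}, \eqref{xji}, \eqref{add-2}, \eqref{ode} (and the crude bound on $\sum_{i\ne k}^{\prime}\frac{1}{x_k-x_i}$ used in the proof of Proposition \ref{dorium}), is negligible after integration in $t$; and the square term in Lemma \ref{ident}(iii) is non-positive. Hence $\partial_t\tilde E^K(t)\le\mathrm{CT}_K(t)+o_{T\to\infty}(T\log_+^3 T)$ for a.e.\ $t\in[\Lambda/4,0]$, where
$$\mathrm{CT}_K(t)\coloneqq\sum_{k,k'\in K;\,j\notin K;\,k\ne k'}\frac{4}{(x_k-x_{k'})^2(x_k-x_j)(x_{k'}-x_j)}.$$
Integrating this differential inequality from a generic $t\in[\Lambda/4,0]$ up to $0$ and averaging in $t$ gives
$$\tilde E^K(0)\ll\frac{1}{|\Lambda|}\int_{\Lambda/4}^0\tilde E^K(t)\,dt+\int_{\Lambda/4}^0\mathrm{CT}_K(t)\,dt+o_{T\to\infty}(T\log_+^3 T),$$
and, since the first integral is negligible, everything reduces to showing $\int_{\Lambda/4}^0\mathrm{CT}_K(t)\,dt=o_{T\to\infty}(T\log_+^3 T)$.

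Since $K$ is an interval, $(x_k-x_j)(x_{k'}-x_j)>0$ for $j\notin K$, so the arithmetic--geometric mean inequality gives $\mathrm{CT}_K(t)\ll\sum_{k\in K}\big(\sum_{k'\in K:k'\ne k}E_{kk'}(t)\big)\big(\sum_{j\notin K}E_{kj}(t)\big)$, a correlated sum of internal and cross energies at each site. For a site $k\in K$ at index-distance $\ge D$ from $\partial K$, Corollary \ref{macro} (through \eqref{xidif}, \eqref{xji}) forces $\sum_{j\notin K}E_{kj}(t)\ll D^{-1}\log_+^2 T$ \emph{uniformly in $t$}, so the contribution of these sites to $\mathrm{CT}_K$ is $\ll D^{-1}\log_+^2 T\sum_{k,k'\in K}E_{kk'}(t)$, whose time-integral is $\ll D^{-1}T\log_+^5 T=o_{T\to\infty}(T\log_+^3 T)$ once $A$ is large. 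It remains to handle the $O(D)$ sites $k$ within $D$ of $\partial K$; for these one invokes the low-energy choice of $K$ from the first paragraph, together with \eqref{add}--\eqref{add-2} and Proposition \ref{gap} to bound the leftover interior interactions, and the local monotonicity of the cross energy in Lemma \ref{ident}(ii) to transfer the integrated smallness of the boundary energy to control near $t=0$. This last step is the main obstacle: $\mathrm{CT}_K$ is essentially quadratic in the energies near $\partial K$, so integrated smallness of the boundary energy does not by itself control $\int_{\Lambda/4}^0\mathrm{CT}_K$, and making the transfer rigorous requires a careful use of Lemma \ref{ident}(ii) (in effect controlling higher moments of, or otherwise stabilising, the boundary energy), and possibly an iteration or refinement of the pigeonhole selection of $K$.
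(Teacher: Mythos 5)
Your overall strategy (the energy identity of Lemma \ref{ident}(iii), monotonicity of $\tilde E^I$ in $I$, and a Bourgain-type pigeonhole choice of the spatial cutoff, fed by Theorem \ref{prelim}) is the same as the paper's, and your treatment of the sites at index-distance $\geq D$ from $\partial K$ is sound; but the step you flag as ``the main obstacle'' is not a technicality to be patched later --- it is the crux, and as set up your single-shot scheme cannot close it. With one cutoff $K$ fixed for the whole interval $[\Lambda/4,0]$, the boundary contribution to $\int_{\Lambda/4}^0 \mathrm{CT}_K(t)\,dt$ is a time integral of \emph{products} of interior and cross energies at the $O(D)$ sites near $\partial K$, whereas Theorem \ref{prelim} and Proposition \ref{energy-weak} only give $L^1$-in-time control of each factor: a simultaneous spike of height $h$ and duration $1/h$ in the gaps on either side of the cut is consistent with all the integrated bounds (and with Proposition \ref{gap}, which allows $h$ as large as $\exp(O(\log^2 T\log\log T))$) yet contributes $\asymp h$ to the integral of the product. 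Hence no refinement of the integrated-smallness pigeonhole for $K$ alone can work, and Lemma \ref{ident}(ii) does not rescue it directly: it propagates an upper bound on the cross energy \emph{backwards} from a later time over a window of length comparable to the reciprocal of that energy, so to exploit it you would still have to anchor the cutoff at the later endpoint of a short time window and iterate --- at which point you have been forced into the structure of the paper's argument.

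That is exactly how the paper proceeds (Proposition \ref{tlar-iter}): it first pigeonholes Theorem \ref{prelim} \emph{in time} to find a single $t_0\in[\Lambda/4,0]$ with $\tilde E^{[0.5T\log T,3T\log T]}(t_0)=o_{T\to\infty}(T\log^3_+ T)$, and then advances to $t=0$ in $O(\log^2 T)$ steps of length at most $\frac{1}{100\log^2 T}$. In each step the cutoff is chosen afresh: by \eqref{xji} and pigeonholing among $\log^3 T$ candidate positions \emph{at the later endpoint $t_2$}, one finds cuts $j_-,j_+$ whose boundary gaps are $\geq 1/\log T$ at time $t_2$, and the virial identity, Lemma \ref{ident}(iv), applied to the two-element interval $\{j_- -1,j_-\}$ (for which the cross term has a favorable sign) gives $\partial_t (x_{j_-}-x_{j_--1})^2\leq 8$, so these gaps stay $\gg 1/\log T$ throughout the short step. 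This pointwise-in-time lower bound on the gaps at the cut is precisely the control your proposal lacks; it is what lets the paper bound the cross term in Lemma \ref{ident}(iii) over the step with only an additive $\tilde O(1)$ loss (working with the bare energy $E^K$ and converting to $\tilde E$ via Lemma \ref{eb} only at the endpoints), at the harmless price of shrinking the interval by $\log^3 T$ per step, i.e.\ $O(\log^5 T)$ in total. Your averaging over the starting time handles the $\int\tilde E^K$ term just as well as the paper's time pigeonhole, but without some analogue of this short-time, later-endpoint gap stabilization the estimate $\int_{\Lambda/4}^0 \mathrm{CT}_K(t)\,dt=o_{T\to\infty}(T\log^3_+T)$ remains unproven, so the proposal as written is incomplete.
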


Proposition \ref{tlar} will be proven\footnote{We thank Ofer Zeitouni for pointing out an error in a previous version of this argument.} as follows.  First we locate a good initial interval $I^0 = [I_-^0, I_+^0]$:

\begin{proposition}[Locating a good interval]\label{good}  Let $T$ be large.  Then there exists an interval $I^0 = [I_-^0, I_+^0]_{\Z^\ast}$ containing $[0.9 T \log T, 2.1 T \log T]_{\Z^\ast}$ and contained in $[0.8 T \log T, 2.2 T \log T]_{\Z^\ast}$ such that
$$  \int_{\Lambda/4}^0 \sum_\pm \sum_{1 \leq 2^n \leq 0.1 T \log T} 2^{-n} \tilde E^{[I_\pm^0 - 2^n, I_\pm^0 + 2^n]_{\Z^\ast}}(t)\ dt = \tilde O(1)$$
where $\pm$ ranges over both choices of sign $+,-$ and $n$ ranges over natural numbers with $1 \leq 2^n \leq 10 T \log T$.
\end{proposition}

Recall that $\tilde O(1)$ is any quantity which is $O(\log^{O(1)} T)$.

\begin{proof}  By the pigeonhole principle, it suffices to show that
$$ 
\int_{0.8 T \log T}^{0.9 T \log T} \int_{2.1 T \log T}^{2.2 T \log T} \int_{\Lambda/4}^0  \sum_\pm \sum_{1 \leq 2^n \leq 0.1 T \log T} 2^{-n} \tilde E^{[I_\pm^0 - 2^n, I_\pm^0 + 2^n]_{\Z^\ast}}(t)\ dt dI_+^0 dI_-^0 \lessapprox T^2.$$
By the triangle inequality (and the definition of $\tilde O$) it suffices to show that
$$ 
\int_{0.8 T \log T}^{0.9 T \log T} \int_{2.1 T \log T}^{2.2 T \log T} \int_{\Lambda/4}^0 \tilde E^{[I_\pm^0 - 2^n, I_\pm^0 + 2^n]_{\Z^\ast}}(t)\ dt dI_+^0 dI_-^0 \lessapprox 2^n T^2$$
for either choice of sign $\pm$ and any $1 \leq 2^n \leq 0.1 T \log T$.  But from the Fubini--Tonelli theorem and the definition of modified energies $\tilde E^I$ we see that
$$ \int_{0.8 T \log T}^{0.9 T \log T} \int_{2.1 T \log T}^{2.2 T \log T} 
\tilde E^{[I_\pm^0 - 2^n, I_\pm^0 + 2^n]_{\Z^\ast}}(t)\ dI_+^0 dI_-^0 \lessapprox 2^n T \tilde E^{[0.5 T \log T, 3 T \log T]_{\Z^\ast}}(t)$$
and the claim now follows from Theorem \ref{prelim}.
\end{proof}

Once the interval $I^0$ is located, the main step is to iterating the following claim:

\begin{proposition}[Energy propagation inequality]\label{tlar-iter}  Let $T$ be large, let $I = [I_-,I_+]_{\Z^\ast}$ be an interval containing $[T \log T, 2T \log T]_{\Z^\ast}$ and contained in $[0.5 T \log T, 3T\log T]_{\Z^\ast}$, let $I^0$ be as in Lemma \ref{good}, and let $\Lambda/4 \leq t_1 \leq t_2 \leq 0$ be such that $t_2 \leq t_1 + \frac{1}{100 \log^2 T}$.  Then
$$ \tilde E^{I'}(t_2) \leq \tilde E^{I}(t_1) + \tilde O(1 + |I_- - I_-^0| + |I_+ - I_+^0|),$$
where $I' \coloneqq {[I_- + \log^3 T, I_+ - \log^3 T]_{\Z^\ast}}$ is a slightly shrunken version of $I$.
\end{proposition}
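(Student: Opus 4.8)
The plan is to pass from the renormalized energy $\tilde E^{I}$ to the ``bare'' energy $G^{K}(t) := \sum_{k,k'\in K:\,k\neq k'} E_{kk'}(t)$, which for a finite interval $K$ is a finite sum of $C^{1}$ functions of $t$ (the zeros being simple by Proposition~\ref{gap}), and which by the energy identity Lemma~\ref{ident}(iii) satisfies $\partial_{t}G^{K}\le \mathrm{Cross}_{K}$, where we abbreviate
$$ \mathrm{Cross}_{K}(t) := 4\sum_{\substack{k,k'\in K;\ j\notin K\\ k\neq k'}}\frac{1}{(x_{k}-x_{k'})^{2}(x_{k}-x_{j})(x_{k'}-x_{j})} $$
(we discard the non-positive perfect-square term in Lemma~\ref{ident}(iii)). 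We then compare $G^{I'}(t_{2})$ with $G^{I}(t_{1})$ by choosing the most favorable intermediate interval and integrating this differential inequality over the short window $[t_{1},t_{2}]$.

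\emph{Step 1: reduction to the bare energy.} Put $d:=\log^{3}T$, write $I=[I_{-},I_{+}]$, and for $0\le s\le d$ set $K_{s}:=[I_{-}+s,\,I_{+}-s]$, so $K_{0}=I$ and $K_{d}=I'$. By Lemma~\ref{eb} (whose error term is $\tilde O(1)$ since $|I_{\pm}|\asymp T\log T$) one has $\tilde E^{K_{s}}(t)=G^{K_{s}}(t)-M^{K_{s}}+\tilde O(1)$, with $M^{K_{s}}:=\sum_{k,k'\in K_{s}:\,k\neq k'}|\xi_{k}-\xi_{k'}|^{-2}$ independent of $t$; passing from $I$ to $I'$ only affects pairs with an index in the buffer $I\setminus I'$ of size $\asymp d$, so $|M^{I}-M^{I'}|\ll d\log^{2}T=\tilde O(1)$. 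Thus it suffices to prove $G^{I'}(t_{2})-G^{I}(t_{1})\ll \log^{O(1)}T$.

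\emph{Step 2: monotonicity and the optimal interval.} Since $E_{kk'}\ge0$, the map $K\mapsto G^{K}$ is monotone, so $G^{I'}(t_{2})\le G^{K_{s}}(t_{2})$ and $G^{K_{s}}(t_{1})\le G^{I}(t_{1})$ for every $s$. By the fundamental theorem of calculus and $\partial_{t}G^{K_{s}}\le \mathrm{Cross}_{K_{s}}$,
$$ G^{I'}(t_{2})-G^{I}(t_{1})\ \le\ G^{K_{s}}(t_{2})-G^{K_{s}}(t_{1})\ \le\ \int_{t_{1}}^{t_{2}}\mathrm{Cross}_{K_{s}}(t)\,dt\qquad(0\le s\le d), $$
and averaging over $s$ reduces the proposition to the single estimate
$$ \sum_{s=0}^{d}\int_{t_{1}}^{t_{2}}\mathrm{Cross}_{K_{s}}(t)\,dt\ \ll\ d\,\log^{O(1)}T. $$

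\emph{Step 3: controlling the cross terms, and the main obstacle.} Interchanging the $s$-summation with the sum over $k,k',j$ introduces the multiplicity $N(k,k',j)=\#\{0\le s\le d:\,k,k'\in K_{s},\,j\notin K_{s}\}$, which vanishes unless $j$ lies within $d$ of $\partial I$ on the side opposite $\{k,k'\}$ and satisfies $N(k,k',j)\ll\min(d,\ \mathrm{dist}(j,I')+1)$. Using the partial fractions $\tfrac{1}{(x_{k}-x_{j})(x_{k'}-x_{j})}=\tfrac{1}{x_{k}-x_{k'}}\bigl(\tfrac{1}{x_{j}-x_{k'}}-\tfrac{1}{x_{j}-x_{k}}\bigr)$ one exposes the cancellation coming from the (approximate) oddness of $j\mapsto(\xi_{k}-\xi_{j})^{-1}$ around $j=k$, exactly as in the proofs of Lemmas~\ref{eb},~\ref{eb2} and Proposition~\ref{dorium}: by the macroscopic regularity \eqref{add}, \eqref{xji}, \eqref{xidif} the part of the cross term in which $\min(|k-j|,|k'-j|)\ge\log^{2}T$ is bounded pointwise in $t$ by $\ll d\log^{O(1)}T$, so its time integral is admissible thanks to $t_{2}-t_{1}\le\tfrac{1}{100\log^{2}T}$. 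The remaining, genuinely singular piece — where $k,k',j$ all lie within $\log^{2}T$ of a common endpoint of some $K_{s}$, hence within $O(d)$ of $\partial I$, so that a single summand can be enormous when a narrow gap sits at that endpoint — admits no pointwise bound; here one interchanges the sums (picking up a factor $\ll\log^{2}T$ from the admissible $s$) and estimates the result by a localized, \emph{time-integrated} energy over the $O(d)$-neighbourhood of $\partial I$, which is controlled using Proposition~\ref{energy-weak}, Theorem~\ref{prelim} and the shortness of $[t_{1},t_{2}]$, the gap lower bound of Proposition~\ref{gap} being used only to keep every intermediate quantity finite. The hard point is precisely this singular piece: since $\tilde E^{I}$ itself can be as large as $\asymp T\log^{3}T$, any treatment of the boundary flux that does not exploit the odd-power cancellation $\sum_{k'}(x_{k}-x_{k'})^{-3}$ for the regular part, nor the time averaging against the integrated energy bounds for the singular part, loses a factor comparable to $T$; it is exactly the interplay of the buffer width $d=\log^{3}T$, the time-step $\tfrac{1}{100\log^{2}T}$, and the averaging over the $d$ positions $K_{s}$ that makes the bookkeeping close.
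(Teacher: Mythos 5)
Your Steps 1--2 are sound bookkeeping (Lemma \ref{eb} plus monotonicity of the bare energy, and a valid averaging-over-$s$ reduction), but Step 3 — which you yourself identify as ``the hard point'' — contains the real gaps, and they are not repairable with the tools you cite. First, the ``regular part'' is not bounded pointwise by $\log^{O(1)}T$: for $j\notin K_s$ and $k,k'\in K_s$ the factors $x_k-x_j$ and $x_{k'}-x_j$ have the same sign, so every term of $\sum_{j\notin K_s}\frac{1}{(x_k-x_j)(x_{k'}-x_j)}$ is positive, and the odd-power cancellation you invoke from Lemmas \ref{eb}, \ref{eb2} and Proposition \ref{dorium} (which concerns single signed factors $1/(\xi_k-\xi_j)$) simply is not available here. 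After the $j$-summation you are left with essentially $\log^{O(1)}T$ times $\sum_{k\neq k'\in K_s}(x_k-x_{k'})^{-2}$ weighted by proximity to the cut, i.e.\ the bare energy; the only pointwise-in-$t$ gap control available at this stage is Proposition \ref{gap}, which permits a single interior gap as small as $\exp(-O(\log^2 T\log\log T))$, so no pointwise bound of polylogarithmic size holds, and multiplying by $t_2-t_1\le\frac{1}{100\log^2T}$ does not rescue it. Second, for the singular piece you appeal to Proposition \ref{energy-weak} and Theorem \ref{prelim}, but these are time-integrated bounds of size $T^2\log^{O(1)}T$ and $o_{T\to\infty}(T\log^3 T)$ over blocks of $\asymp T\log T$ indices, with no localization to the $O(\log^3T)$-index buffer around $\partial I$; nothing proven so far prevents essentially all of that energy from concentrating on pairs adjacent to $\partial I$ during $[t_1,t_2]$, whereas you need a bound of size $\log^{O(1)}T$ there. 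So the crucial estimate $\sum_{s}\int_{t_1}^{t_2}\mathrm{Cross}_{K_s}\ll d\log^{O(1)}T$ is asserted rather than proved.

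The missing idea is the one the paper uses: do not average over cut positions, but cut \emph{at a large gap} and propagate that gap backwards in time. By \eqref{xji} and \eqref{xidif}, the $\log^3T$ zeros indexed by each buffer $[I_-,I_-+\log^3T]$ and $[I_+-\log^3T,I_+]$ span an interval of length $\gg\log^2T$ at time $t_2$, so the pigeonhole produces consecutive indices $j_--1<j_-$ and $j_+<j_++1$ inside the buffers with $x_{j_-}(t_2)-x_{j_--1}(t_2)\ge 1/\log T$ and $x_{j_++1}(t_2)-x_{j_+}(t_2)\ge 1/\log T$. Applying Lemma \ref{ident}(iv) to the two-element sets $\{j_--1,j_-\}$ and $\{j_+,j_++1\}$ (where the subtracted term has a favourable sign) gives $\partial_t(\hbox{gap})^2\le 8$, and this is the \emph{only} place the hypothesis $t_2\le t_1+\frac{1}{100\log^2T}$ is used: it guarantees these two gaps remain $\gg 1/\log T$ on all of $[t_1,t_2]$. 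Taking $K=[j_-,j_+]$, every cross-boundary factor $1/|x_k-x_j|$ with $k\in K$, $j\notin K$ is then $O(\log T)$ uniformly in $t\in[t_1,t_2]$, which is the input the paper uses to control the cross term in the energy identity of Lemma \ref{ident}(iii); one then finishes exactly as in your Step 1 (fundamental theorem of calculus, monotonicity of $E^K$ in $K$, Lemma \ref{eb}). Without a substitute for this gap-seeking pigeonhole and the backwards-in-time propagation via the virial identity — neither of which appears in your proposal — the boundary flux cannot be controlled and the argument does not close.
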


Let us assume Proposition \ref{tlar-iter} for the moment and finish the proof of Proposition \ref{tlar}.  From Theorem \ref{prelim} we have
$$ \int_{\Lambda/4}^0 \tilde E^{[0.5T \log T, 3T \log T]_{\Z^\ast}}(t) \, dt = o_{T \to \infty}( T \log^3_+ T )$$
and so by the pigeonhole principle, we may find $\Lambda/4 \leq t_0 \leq 0$ such that
$$ \tilde E^{[0.5T \log T, 3T \log T]}(t_0) = o_{T \to \infty}( T \log^3_+ T ).$$
In particular
$$ \tilde E^{I^0}(t_0) = o_{T \to \infty}( T \log^3_+ T ).$$
Applying Proposition \ref{tlar-iter} $O(\log^2 T)$ times to get from $t_0$ to $0$ (starting from the interval $I^0$ and shrinking it by at most $O(\log^5 T)$ during the entire process), we conclude that
$$ \tilde E^{[I_-, I_+]}(0) \leq o_{T \to \infty}( T \log^3_+ T )$$
\sloppy for some interval $[I_-, I_+]_{\Z^\ast}$ containing $[T \log T, 2T \log T]_{\Z^\ast}$ and contained in $[0.5 T \log T, 3T\log T]_{\Z^\ast}$. 
Since $\tilde E^I(0)$ is monotone in $I$, Proposition \ref{tlar} follows.

It remains to establish Proposition \ref{tlar-iter}.  We use an argument due to Bourgain \cite[\S 4]{bourgain} that combines local conservation laws (or, in this case, local monotonicity formulae) with the pigeonhole principle.

The first step is to locate a good subset of particles indexed by an interval close to $[I_-, I_+]$ which does not gain too much energy due to interactions its environment, due to separation between these particles and the environment.  From \eqref{xji} and the pigeonhole principle, one can find natural numbers 
$$ I_- \leq j_--1 < j_- \leq I_- + \log^3 T \leq I_+ - \log^3 T \leq j_+ < j_++1 \leq I_+$$
such that
\begin{equation}\label{xjmin}
 x_{j_-}(t_2) - x_{j_--1}(t_2) \geq \frac{1}{\log T}
\end{equation}
(say) and similarly
$$ x_{j_++1}(t_2) - x_{j_+}(t_2) \geq \frac{1}{\log T}.$$

From Lemma \ref{ident}(iv) applied to $K = \{ j_--1, j_-\}$ we have
$$ \partial_t ( x_{j_-}(t) - x_{j_--1}(t) )^2 \leq 8 $$
for all $t_1 \leq t \leq t_2$.  Since $t_2 - t_1 \leq \frac{1}{100 \log^2 T}$, we conclude from the fundamental theorem of calculus and \eqref{xjmin} that
\begin{equation}\label{xjmin2}
 x_{j_-}(t) - x_{j_--1}(t) \gg \frac{1}{\log T}
\end{equation}
for all $t_1 \leq t \leq t_2$.  Similarly
\begin{equation}\label{xjmax2}
 x_{j_++1}(t) - x_{j_+}(t) \gg \frac{1}{\log T}.
\end{equation}
The basic point is that because the particles $x_{j_-},\dots,x_{j_+}$ never get too close to the remaining particles $x_j, j < j_-$ and $x_j, j > j_+$ in the system, the total energy of former set of particles will remain approximately conserved over short periods of time thanks to Lemma \ref{ident}.  More precisely, let $K$ now denote the discrete interval $K \coloneqq [j_-, j_+]_{\Z^\ast}$, and define the un-normalized energy
$$ E^K(t) \coloneqq \sum_{k,k' \in K:\, k \neq k'} E_{kk'}(t).$$
From Lemma \ref{ident} we have
$$ \partial_t E^K(t) \leq \sum_{\substack{j \not \in K \\ k,k' \in K:\,  k \neq k'}} \frac{4}{(x_k-x_{k'})^2 (x_k-x_j)(x_{k'}-x_j)}.$$
for $t_1 \leq t \leq t_2$.  But from \eqref{xjmin2}, \eqref{xjmax2}, \eqref{xji} we have
\begin{align*}
 \sum_{\substack{j \not \in K \\ k,k' \in K:\, |x_k-x_{k'}| \geq 1}}& \frac{4}{(x_k-x_{k'})^2 (x_k-x_j)(x_{k'}-x_j)} \\
&\lessapprox 
 \sum_{k,k' \in K:\, |x_k-x_{k'}| \geq 1} \frac{1}{(x_k-x_{k'})^2} \sum_\pm \left( \frac{1}{1+|k - j_\pm|} + \frac{1}{1+|k' - j_\pm|} \right)\\
&\lessapprox 1
\end{align*}
and
\begin{align*}
 \sum_{\substack{j \not \in K \\ k,k' \in K:\, |x_k-x_{k'}| < 1}} &\frac{4}{(x_k-x_{k'})^2 (x_k-x_j)(x_{k'}-x_j)} \\
&\lessapprox
 \sum_{k,k' \in K:\, |x_k-x_{k'}| < 1} \frac{1}{(x_k-x_{k'})^2} \sum_\pm \frac{1}{1+|k - j_\pm|}\\
&\lessapprox (1 + |I_- - I_-^0| + |I_+ - I_+^0|)\\
&\quad\quad \times \sum_{k,k' \in K:\, |x_k-x_{k'}| < 1} \frac{1}{(x_k-x_{k'})^2} \sum_\pm \frac{1}{1+|k - I^0_\pm|} \\
&\lessapprox (1 + |I_- - I_-^0| + |I_+ - I_+^0|)\\
&\quad\quad \times
\left(\sum_\pm \sum_{1 \leq 2^n \leq 0.1 T \log T} 2^{-n} \tilde E^{[I_\pm - 2^n, I_\pm + 2^n]}(t)  
+ T^{-1} \tilde E^{[0.5 T \log T, 3 T \log T]_{\Z^\ast}}(t)\right).
\end{align*}
Inserting these bounds and integrating using the fundamental theorem of calculus, Proposition \ref{good}, and Theorem \ref{prelim},
, we conclude that
$$ E^K(t_2) \leq E^K(t_1) + \tilde O(1 + |I_- - I_-^0| + |I_+ - I_+^0|)$$
which by monotonicity of $E^K$ in $K$ implies that
$$ E^{I'}(t_2) \leq E^I(t_1) + \tilde O(1 + |I_- - I_-^0| + |I_+ - I_+^0|).$$
Applying Lemma \ref{eb}, we conclude that
$$ \tilde E^{I'}(t_2) \leq \tilde E^I(t_1) + \tilde O(1 + |I_- - I_-^0| + |I_+ - I_+^0|) + 2 \sum_{\substack{j \in I \backslash I' \\ k \in I:\, j \neq k}} \frac{1}{(\xi_j-\xi_k)^2}$$
(the factor of two coming because if $j,k \in I$ are not both in $I'$ then at least one of the cases $j \in I \backslash I', k \in I$ and $k \in I \backslash I', j \in I$ occurs). But from \eqref{xidif} one has
$$ \sum_{\substack{j \in I \backslash I' \\ k \in I:\, j \neq k}} \frac{1}{(\xi_j-\xi_k)^2} \lessapprox 1$$
and Proposition \ref{tlar-iter} follows.

\section{Contradicting pair correlation}

It remains to see that Proposition \ref{tlar} is in contradiction with results that are known to be the case for the points $x_j(0)$. Note in particular that
$$
\sum_{T \log T \leq j, j+1 \leq 2T \log T} \frac{1}{|\xi_{j+1}-\xi_j|^2} V\left(\frac{x_{j+1}(0) - x_j(0)}{\xi_{j+1}-\xi_j}\right) \leq \tilde{E}^{[T\log T, 2T \log T]}(0).
$$
In this range using \eqref{xia} and \eqref{add-2} we have $\xi_{j+1}-\xi_j \sim 4\pi/\log_+T$, and so Proposition \ref{tlar} implies that
$$
\log^2 T \sum_{T \log T \leq j, j+1 \leq 2T \log T} V\left(\frac{x_{j+1}(0) - x_j(0)}{\xi_{j+1}-\xi_j}\right) = o_{T\rightarrow\infty}(T \log^3 T)
$$
By Markov's inequality (see \cite[Ch. 1]{TaVu}), this implies that
$$ V\left(\frac{x_{j+1}(0) - x_j(0)}{\xi_{j+1}-\xi_j}\right) = o_{T \to \infty}(1)$$
for a fraction $1-o_{T \to \infty}(1)$ of $j \in [T\log T, 2 T\log T]$.  But using the properties \eqref{vlog} of the function $V$, this implies that
$$
\frac{x_{j+1}(0) - x_j(0)}{\xi_{j+1}-\xi_j} = 1 + o_{T\to \infty}(1)
$$
or
\begin{equation}\label{picketfence}
x_{j+1}(0) - x_j(0) = \frac{4\pi + o_{T\to \infty}(1)}{\log T},
\end{equation}
for a fraction $1-o_{T \to \infty}(1)$ of $j \in [T\log T, 2 T\log T]$.

In particular since the points $x_j(0)$ are twice the imaginary ordinates of nontrivial zeroes of the Riemann zeta function, this implies that the gaps between the zeroes of the zeta function are rarely much larger or smaller than the mean spacing. But this contradicts perhaps most strikingly results of Montgomery \cite{montgomery} who determined on the Riemann Hypothesis the pair correlation measure for the zeroes, measured against a class of band-limited functions. As noted by Montgomery his result implies that a positive proportion of zeroes have a spacing between them strictly smaller than mean spacing. The proof of this claim is not written down in \cite{montgomery}, but Conrey, et. al. prove as their main result of \cite{cgggh} (using different ideas) that for any $\lambda > .77$ there exists a constant $c(\lambda) > 0$ such that at least a proportion $c(\lambda)$ of $j \leq T \log T$ satisfy 
$$
x_{j+1}(0) - x_j(0) \leq \lambda \frac{4\pi}{\log T}.
$$
This contradicts \eqref{picketfence} and therefore the assumption that $\Lambda < 0$.


\enddocument